\newtheorem{thm}{Theorem}[section]
\newtheorem{prop}[thm]{Proposition}
\newtheorem{cor}[thm]{Corollary}
\newtheorem{lem}[thm]{Lemma}
\newtheorem{conj}[thm]{Conjecture}
\newtheorem{claim}[thm]{Claim}
\theoremstyle{definition}
\theoremstyle{definition}
\theoremstyle{definition}
\newtheorem{defn}[thm]{Definition}
\theoremstyle{definition}
\newtheorem{example}[thm]{Example}
\theoremstyle{definition}
\theoremstyle{definition}
\theoremstyle{definition}
\newtheorem{fact}[thm]{Fact}
\theoremstyle{remark}
\theoremstyle{remark}
\newtheorem{remark}[thm]{Remark}
\setlist[enumerate]{topsep=0pt,partopsep=1ex,parsep=1ex}
\newcommand{\NN}{\ensuremath{\mathbb N}}
\newcommand{\RR}{\ensuremath{\mathbb R}}
\newcommand{\C}{\mathcal{C}}
\newcommand{\X}{\mathcal{X}}
\newcommand{\Z}{\mathcal{Z}}
\newcommand{\h}{\mathcal{H}}
\newcommand{\W}{\mathcal{W}}
\newcommand{\E}{\mathcal{E}}
\renewcommand{\S}{\mathcal{S}}
\renewcommand{\P}{\mathcal{P}}
\def\vc{\operatorname{vc}}
\def\esb{\mathrm{esb}}
\def\bd{\mathrm{bd}}
\def\fcl{\mathrm{fcl}}
\def\Ind#1#2{#1\setbox0=\hbox{$#1x$}\kern\wd0\hbox to 0pt{\hss$#1\mid$\hss}
\lower.9\ht0\hbox to 0pt{\hss$#1\smile$\hss}\kern\wd0}
\def\ind{\mathop{\mathpalette\Ind{}}}
\def\R{R}
\def\cR{\mathfrak{R}}
\renewcommand{\phi}{\varphi}
\renewcommand{\hat}{\widehat}
\renewcommand{\tilde}{\widetilde}
\newcommand{\tildepi}{\tilde{\pi}}
\renewcommand{\int}{\operatorname{int}}
\newcommand{\heq}{~\hat{=}~}
\def\acl{\operatorname{acl}}
\title{On the shatter function of semilinear set systems}
\author{
Abdul Basit\thanks{School of Mathematics, Monash University, Melbourne VIC 3800, Australia; \texttt{abdul.basit@monash.edu}.
Supported by Australian Research Council grant DP220102212.}
\and Chieu-Minh Tran\thanks{Department of Mathematics, National University of Singapore; \texttt{trancm@nus.edu.sg}.
}
}
\begin{document}

\maketitle

\begin{abstract}
We show that the shatter function of a semilinear set system on $\mathbb{R}^m$ is asymptotic to a polynomial. This confirms, for the structure $(\mathbb{R}; +, <)$, a conjecture of Chernikov and is a step towards characterizing model-theoretic linearity via shatter functions.
\end{abstract}

\section{Introduction}

A \emph{set system} $\S$ on a given set $V$ is a collection of subsets of $V$. The \emph{trace} of a set system $\S$ on a subset $A \subseteq V$ is $\S \cap A := \{ S \cap A: S \in \S \}$ and the \emph{shatter function} $\pi_\S : \NN \rightarrow \NN$ of $\S$ is
$$ \pi_\S(t) := \max\{ |\S \cap A|: A \subseteq V, |A| = t \}. $$
The shatter function is a natural measure of the complexity of a set system and is extensively studied in computer science, learning theory, combinatorics, logic, and other areas. See~\cites{adhms16, chernikovnotes, matouvsek1998} for a detailed introduction to combinatorial and logical aspects. 

In this paper, we are interested in set systems that arise in
geometric settings. A {\it semilinear set} in $\RR^m$ is either the solution set of a system of finitely many linear equations and inequalities, or a finite union of such sets.
A set system $\S$ on $\RR^m$ \emph{parameterized} by $Y \subseteq \RR^n$ is \emph{semilinear} if $\S = \{S_b \subseteq \RR^m : b \in Y \}$ and the sets $Y$ and $Z = \{ (a,b) \in \RR^m \times \RR^n : a \in S_b,\,b \in Y \}$ are semilinear. Our main result is the following.
\begin{thm}
\label{thm:main}
Let $\S$ be a semilinear set system on $\RR^m$ parameterized by $Y \subseteq \RR^n$. Then there exist constants $C_1, C_2 > 0$ and an integer $0 \leq s \leq n$ such that
\[ C_1 t^{s} < \pi_\S(t) < C_2 t^{s} \quad \text{for every } t \geq 1. \]
That is, $\pi_\S(t) = \Theta(t^{s})$ (as $t \rightarrow \infty$).
\end{thm}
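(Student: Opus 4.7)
The plan is to pass from counting traces to counting cells in a hyperplane arrangement in the parameter space $\RR^n$, then exploit the rigid linear-algebraic structure of the semilinear setting to extract a matching integer exponent.

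First, I would reduce to the case where $Z$ is a single cell. Apply a semilinear cell decomposition $Z = Z_1 \sqcup \cdots \sqcup Z_\ell$ with each $Z_j$ a semilinear cell, and set $\S^{(j)} = \{(Z_j)_b : b \in Y\}$. The inequalities
\[ \max_j \pi_{\S^{(j)}}(t) \le \pi_\S(t) \le \sum_j \pi_{\S^{(j)}}(t) + 1 \]
show it suffices to prove the theorem for each $\S^{(j)}$ separately and then take $s = \max_j s_j$. So assume $Z$ is a single cell, cut out by linear conditions $L_i(a,b) = \alpha_i \cdot a + \beta_i \cdot b + \gamma_i \bowtie_i 0$ for $i = 1, \ldots, k$.

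Next, I would dualize. For each $a \in \RR^m$ set $T_a = \{b \in Y : a \in S_b\}$. For finite $A \subseteq \RR^m$, two parameters $b, b' \in Y$ give the same trace on $A$ iff they lie in the same atom of the Boolean algebra generated by $\{T_a : a \in A\}$. The boundary of each $T_a$ lies in the hyperplanes $H_{i,a} = \{b : \beta_i \cdot b = -\alpha_i \cdot a - \gamma_i\}$, which, as $a$ varies, form $k$ parallel classes with fixed normals $\beta_1, \ldots, \beta_k$. Set $W = \mathrm{span}(\beta_1, \ldots, \beta_k) \subseteq \RR^n$, let $p_W : \RR^n \to W$ be the orthogonal projection, write $Y_Z \subseteq Y$ for the projection of $Z$ to the parameter space, and put $s = \dim p_W(Y_Z)$.

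For the upper bound, each $H_{i,a}$ is invariant under translation by $W^\perp$, so the cells of the arrangement $\h(A) = \{H_{i,a}\}_{i \le k,\, a \in A}$ meeting $Y_Z$ correspond to cells of an arrangement of at most $kt$ affine hyperplanes in $W$ meeting the $s$-dimensional semilinear set $p_W(Y_Z)$; a Zaslavsky-type count then yields $\pi_\S(t) = O(t^s)$. For the lower bound I would select indices $i_1, \ldots, i_s$ whose $\beta_{i_j}$'s induce linearly independent affine functionals on $p_W(Y_Z)$, and construct $A$ in a grid pattern so that $a \mapsto (\alpha_{i_1} \cdot a, \ldots, \alpha_{i_s} \cdot a)$ attains $\Theta(t^{1/s})$ distinct values in each coordinate. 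The resulting sub-arrangement of $\h(A)$ then sits in general affine position within $p_W(Y_Z)$, producing $\Theta(t^s)$ open cells, each yielding a distinct trace on $A$.

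The principal obstacle is the lower bound under the rigidity of the semilinear setting. The normal directions $\beta_i$ are fixed by $Z$ and generic rotations are unavailable, so the witness $A$ must be designed so that its induced translations achieve sufficient affine independence across the $s$ selected directions while the corresponding cells actually intersect $Y_Z$, not merely $Y$. A secondary subtlety is showing that $s$ is intrinsic to the family rather than an artifact of the presentation of the $L_i$'s; this will likely require a careful normalization of the defining data together with a Ramsey-style extraction of a non-degenerate sub-configuration, to avoid cases where many atoms of the Boolean algebra collapse.
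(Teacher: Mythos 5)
There are two genuine gaps, and each sits at exactly the point where the real difficulty of the theorem lives. First, your opening reduction via cell decomposition of $Z$ is invalid in both directions. The upper bound $\pi_\S(t) \le \sum_j \pi_{\S^{(j)}}(t) + 1$ is false: take $S_{(b_1,b_2)} = (-\infty,b_1)\cup(b_2,\infty)$ in $\RR$, where each single-cell piece is a family of rays with shatter function $\Theta(t)$ but $\pi_\S(t)=\Theta(t^2)$. The only bound valid in general is the product $\pi_\S(t)\le\prod_j\pi_{\S^{(j)}}(t)$ (Lemma~\ref{lem:vcsubfamily}\ref{itm:addingfamilies} in the paper), and with the product you cannot conclude $s=\max_j s_j$; the interval example shows the true exponent can exceed $\max_j s_j$, and $S_b=(-\infty,b)\cup[b,\infty)=\RR$ shows even the lower bound $\max_j\pi_{\S^{(j)}}\le\pi_\S$ fails, since a piece can be strictly more complex than the union. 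This is precisely why the paper does not decompose naively: it first uniformizes by a compactness argument, then performs decomposition/deconstruction in composite spaces, and proves the lower-bound direction only after shrinking to a suitable window and sub-cell of the parameter space (the First and Second Window Lemmas, Propositions~\ref{prop:decomposition} and~\ref{prop:deconstruction}).

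Second, your exponent $s=\dim p_W(Y_Z)$ is not the right invariant, and the lower-bound mechanism ``$\Theta(t^s)$ open cells, each yielding a distinct trace'' is false. Membership of $b$ in $T_a$ is a \emph{conjunction} of the sign conditions, so distinct cells of the dual arrangement routinely give identical traces; worse, when $Z$ involves equalities the sets $T_a$ are lower-dimensional and every open cell gives the empty trace. Concretely, for the single-cell family of singletons $S_{(b_1,b_2)}=\{(b_1-1,b_2+1)\}$, or of half-lines $S_{(b_1,b_2)}=\{x: x_1=b_1,\ x_2\ge b_2\}$, your $W$ is all of $\RR^2$ and $\dim p_W(Y_Z)=2$, yet $\pi_\S(t)=\Theta(t)$. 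The correct exponent is governed by algebraic dependence among the defining data, not by the dimension of the projected parameter set: this is the content of the paper's forced shatter functions and the independent/dependent dichotomy, where the key input is the weak local modularity result of Basit--Chernikov--Starchenko--Tao--Tran (Fact~\ref{lem:dependent-relation}, via Corollary~\ref{cor:zarankiewicz}), used to show that the maximum of the relevant extremal functions is an integer power of $t$. Your closing worry about presentation-dependence is therefore well founded, but it is not a technicality to be normalized away: without an argument of this dependence-theoretic kind the integrality of the exponent is genuinely out of reach --- indeed the paper exhibits a semilinear ternary relation whose extremal function is provably not $\Theta(t^s)$ for any single obvious exponent (Proposition~\ref{eg:rational}), so any cell-counting scheme that treats the hyperplanes as if in general position must fail.
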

Theorem~\ref{thm:main} presents a natural class of set systems for which the asymptotic behavior of the shatter function can be described precisely. Remarkably, obtaining such a description for many innocuous looking set systems can be extremely challenging. For example, let $\S$ be the set system consisting of pairs of points in $\RR^2$ at unit distance from one another, i.e.,  $\S = {\{\left\{ (b_1, b_2), (b'_1, b'_2) \right\} \subseteq \RR^2 : (b_1, b_2,b'_1, b'_2) \in Y\}}$ with
\[ Y = \left\{ (b_1, b_2,b'_1, b'_2) \in \RR^4 : (b_1 - b'_1)^2 + (b_2 - b'_2)^2 = 1 \right\}. \]
Arguments in~\cite{adhms16} imply that $\pi_\S(t) = \Theta(U(t))$, where $U(t)$ is the maximum number of pairs of points at unit distance from one another and the maximum is taken over all sets of $t$ points in the plane. It is known that $U(t) = \Omega(t^{1 + c_1/\log\log t})$ for a constant $c_1 > 0$ and that $U(t) = O(t^{4/3})$ (see, e.g.,~\cite{sst1984}). The \emph{Erd\H{o}s unit distance conjecture}~\cite{erdos1946} states that $U(t) = O(t^{1 + \varepsilon})$ for every $\varepsilon > 0$. We note that, if the conjecture is true, then $\pi_S(t)$ is not asymptotic to a real power function. Examples of such set systems are already known (see, e.g.,~\cite{adhms16}*{Section 4.2.2}).

Theorem~\ref{thm:main} also provides evidence for the following viewpoint: for a ``sufficiently geometric'' set system, the asymptotic growth rate of its  shatter function can be used to characterize the ``module-like''/``field-like'' (or ``linear''/``non-linear'') dividing line in model theory. In what follows, we give some background and elaborate on this perspective.

The notion of \emph{dividing line} was introduced into model theory by Shelah (see, e.g.,~\cite{shelah2021divide}). A dividing line seeks to partition the class of first-order structures into two parts, the more simple and the less
simple, in a useful manner. For instance, on the more simple side, one often has an arsenal of tools enabling a ``geometric'' understanding of the structure (e.g., a dimension theory in algebraically closed field).
A fundamental theorem in the area is the following. 
\begin{fact}[Sauer-Shelah Lemma~\cites{sauer1972, shelah1972}]
\label{fac:sauer-shelah}
Let $\S$ be a set system. Then exactly one of the following holds:
\begin{enumerate}[label = (\alph*)]
  \item  There is $D \in \NN$ such that $\pi_\S(t) = O(t^{D})$;
    \item  $\pi_\S(t) = 2^t$ for all $t>0$.
\end{enumerate}
\end{fact}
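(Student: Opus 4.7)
The plan is to establish the dichotomy via the quantitative \emph{Sauer-Shelah bound}: writing $\vc(\S) := \sup\{|B| : B \subseteq V,\ |\S \cap B| = 2^{|B|}\}$ for the VC dimension of $\S$, I would show
\[
|\S \cap A| \leq \sum_{i=0}^{d} \binom{|A|}{i}
\]
for every finite $A \subseteq V$, whenever $d = \vc(\S) < \infty$. Granted this, the dichotomy is immediate. If $\vc(\S) = d < \infty$, then $\pi_\S(t) \leq \sum_{i=0}^d \binom{t}{i} = O(t^d)$, giving case (a). If $\vc(\S) = \infty$, then for every $t \in \NN$ there is a shattered set of cardinality exactly $t$ (since any subset of a shattered set is shattered), hence $\pi_\S(t) = 2^t$, giving case (b). The two options are mutually exclusive because $2^t$ eventually exceeds every polynomial.

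To prove the bound, I would induct on $n = |A|$, the base case $n = 0$ being trivial. For the inductive step, fix an element $v \in A$, set $A' = A \setminus \{v\}$, and consider two derived set systems on $A'$:
\[
\S_1 = \S \cap A', \qquad \S_2 = \bigl\{B \subseteq A' : B \in \S \cap A' \text{ and } B \cup \{v\} \in \S \cap A\bigr\}.
\]
Grouping each trace on $A$ with the unique trace that differs from it only in whether $v$ is included gives $|\S \cap A| = |\S_1| + |\S_2|$. Clearly $\vc(\S_1) \leq d$; the crucial observation is $\vc(\S_2) \leq d - 1$, because if $C \subseteq A'$ is shattered by $\S_2$ then (by definition of $\S_2$) the set $C \cup \{v\}$ is shattered by $\S$. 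Applying the inductive hypothesis to $\S_1$ and $\S_2$ on $A'$ yields
\[
|\S \cap A| \leq \sum_{i=0}^{d} \binom{n-1}{i} + \sum_{i=0}^{d-1}\binom{n-1}{i},
\]
and Pascal's identity $\binom{n-1}{i-1} + \binom{n-1}{i} = \binom{n}{i}$ assembles this into $\sum_{i=0}^{d}\binom{n}{i}$, closing the induction.

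The only points requiring care are the bijective accounting that gives $|\S \cap A| = |\S_1| + |\S_2|$ and the VC-dimension drop $\vc(\S_2) \leq d - 1$; both are essentially bookkeeping. No serious obstacle is anticipated, since the argument is purely combinatorial, uses no structure on $\S$ beyond its VC dimension, and is the classical downward-shift proof.
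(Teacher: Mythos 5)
The paper does not prove this statement; it cites it as a fact from the literature (Sauer and Shelah), so there is no in-paper argument to compare against. Your proof is the classical shifting/downward-induction argument that appears in most standard treatments, and the overall logic is sound: prove the Sauer bound $\pi_\S(t) \leq \sum_{i=0}^{d}\binom{t}{i}$ when $\vc(\S)=d<\infty$, observe that this gives case (a), and note that $\vc(\S)=\infty$ forces shattered sets of every finite size (hence case (b)), with exclusivity because $2^t$ outgrows any polynomial. All of this is correct.

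One small but genuine slip in the formalization: your definition
\[
\S_2 = \bigl\{B \subseteq A' : B \in \S \cap A' \text{ and } B \cup \{v\} \in \S \cap A\bigr\}
\]
should read $B \in \S \cap A$ rather than $B \in \S \cap A'$. As written, $B \in \S \cap A'$ only guarantees some $S \in \S$ with $S \cap A' = B$, which leaves $v \in S$ or $v \notin S$ undetermined; it does not ensure that $B$ itself is a trace on $A$. Concretely, with $A = \{v,w\}$ and $\S = \{\{v\},\{v,w\}\}$, your $\S_2$ contains $\emptyset$ and $\{w\}$, so $|\S_1|+|\S_2| = 4 \neq 2 = |\S \cap A|$, and the VC-dimension drop also fails (the witness $S_0$ with $S_0 \cap A' = B$ may contain $v$, so it does not realize the trace $D' \subseteq C$ on $C \cup \{v\}$). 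Your prose description --- ``grouping each trace on $A$ with the unique trace that differs only in whether $v$ is included'' --- makes clear you have the right picture; the fix is simply to require $B \in \S \cap A$ (equivalently, both $B$ and $B\cup\{v\}$ lie in $\S\cap A$), after which both the counting $|\S\cap A| = |\S_1| + |\S_2|$ and the bound $\vc(\S_2) \leq d-1$ go through exactly as you sketch.
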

We note that, if Fact~\ref{fac:sauer-shelah}(a) holds, then $D$ can be chosen to be the \emph{VC-dimension} of the set system $\S$  (see, e.g., \cite{adhms16} for definitions and details).

The dichotomy in the Sauer-Shelah Lemma corresponds to the NIP/IP dividing line in model theory. Given a structure $M$, a set system $\S$ is definable in $M$ if it is of the form $\{S_b \subseteq M^m: b \in Y \}$ with both $Y\subseteq M^n$ and $Z=\{(a,b) \in M^m \times M^b: A \in S_b, b\in Y\}$ definable. Then, a structure $M$ is NIP if and only if every definable set systems has a polynomially bounded shatter function.

The above discussion suggests that one can try to characterize  other dividing lines in model theory by the growth rate of shatter function of a set system $\S$. A convenient measure of the growth rate is through the coarser notion of the \emph{VC-density} of $\S$, defined as 
$$\vc(\S) := \limsup_{t \rightarrow \infty} \frac{\log \pi_\S(t)}{\log t}.$$

The starting point of our current project is the study of VC-density in NIP structures by Aschenbrenner, Dolich, Haskell, Macpherson, and Starchenko~\cite{adhms16}. Here, a standard example showing that the VC~density may be rational is the semialgebraic set system $\S$ consisting of pairs of points in $\RR^2$ with unit inner product. That is, 
$\S = \{S_{b} : b \in Y\}$ where $$ Y = \left\{ (b_1, b_2,b'_1, b'_2) \in \RR^4 : b_1b'_1+ b_2b'_2 = 1 \right\}, \text{ and } S_{(b_1,b_2, b'_1, b'_2)} = \left\{ (b_1, b_2), (b'_1, b'_2) \right\}. $$
As before,  arguments in~\cite{adhms16} imply that $\pi_\S(t) = \Theta (I(t))$, where $I(t)$ is the maximum number of incidences between a set of $t$ points and a set of $t$ lines in $\RR^2$. As a consequence of the Szemer\'edi-Trotter theorem~\cite{st1983} and a corresponding lower bound construction, we have $\pi_{\S}(t) = \Theta(I(t)) = \Theta(t^{4/3})$ and, hence, that $\vc(\S) = \frac{4}{3}$. Interestingly, we are not aware of a set system consisting of pairs of points in $\RR^2$ whose VC-density is not in $\{0, 1, \frac{4}{3}, 2\}$.

In the preceding example, the set system is defined using the inner product, which involves both addition and multiplication. It seems impossible to define a set system with non-integer VC-density using only addition or only multiplication. 
This lends credence to the conjecture that the growth rate of the shatter function may allow us to distinguish between ``module-like'' structures, such as $(\RR, +, <)$ or $(\RR, \times,<)$, and ``field-like'' structures, such as $(\RR,+, \times,<)$. This ``module-like''/``field-like'' dividing line plays an important role in many classical results of model theory, such as the finite-axiomatizability problem and Mordell-Lang conjecture for function fields.

The above intuition  is formalized in (c) of the following conjecture of Chernikov.

\begin{conj}[\cite{artemconj}] \label{q:irrational}
Let $(M; \ldots)$ be an NIP structure and let $\S = \{ S_b : b \in Y \}$ with $Y \subseteq M^n$ be a definable set system. Then the following hold:
\begin{enumerate}[label = (\alph*)]
\item $\vc(\S)$ is rational;
\item if $M$ is o-minimal, there is a finite set $\Delta := \Delta(n) \subseteq [0, n] \cap \mathbb{Q}$ such that $\vc(\S)\in \Delta$; \item if $M$ is o-minimal and weakly locally modular, then $\vc(\S) \in \{0, 1, \ldots, n\}$. 
\end{enumerate}
\end{conj}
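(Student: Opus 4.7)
The plan is to reduce to a semilinear normal form via cell decomposition and then analyze $|\S \cap A|$ as a cell-counting problem for a hyperplane arrangement on a suitable quotient of $Y$. First, I would apply cell decomposition for $(\RR;+,<)$ to reduce to the case where $Y$ is a single semilinear cell and $Z\cap(\RR^m\times Y)$ is a fixed Boolean combination of half-spaces $\{(a,b) : \langle u_j, a\rangle + \langle v_j, b\rangle \leq c_j\}$, $j = 1,\ldots,K$. Let $\sim$ be the equivalence relation on $Y$ defined by $b\sim b'$ iff $S_b = S_{b'}$, and let $\tilde Y = Y/{\sim}$. Using quantifier elimination for $(\RR;+,<)$, the relation $\sim$ is semilinear, so $\tilde Y$ inherits a semilinear structure of some integer dimension $s\in[0,n]$. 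The goal is then to show $\pi_\S(t)=\Theta(t^s)$.

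For the upper bound, fix $A = \{a_1,\ldots,a_t\} \subseteq \RR^m$. Each characteristic map $\chi_i\colon Y\to\{0,1\}$, $b\mapsto \mathbf{1}[(a_i,b)\in Z]$, factors through $\tilde Y$, so $|\S\cap A|$ equals the number of atoms of the Boolean algebra on $\tilde Y$ generated by $\{\chi_i\}$. The atoms are cut out by the primitive half-spaces $\{b : \langle v_j, b\rangle \leq c_j - \langle u_j, a_i\rangle\}$, whose boundary hyperplanes have normals in the fixed set $\{v_1,\ldots,v_K\}$ and descend to $\tilde Y$. Cell-decomposing $\tilde Y$ into semilinear pieces each locally diffeomorphic to an open subset of $\RR^s$ and applying the standard $O(t^s)$ bound for hyperplane arrangements in dimension $s$ to each piece gives $|\S\cap A| \leq C_2 t^s$.

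For the lower bound, I would choose an open $s$-dimensional semilinear chart $\tilde U\subseteq\tilde Y$ and construct $A\subseteq\RR^m$ so that the induced arrangement on $\tilde U$ is in sufficiently generic position. A standard grid construction should then produce $\Omega(t^s)$ cells on $\tilde U$, each corresponding to a distinct trace, yielding $|\S\cap A|\geq C_1 t^s$ and hence $\pi_\S(t)=\Theta(t^s)$.

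The main obstacle I anticipate is the lower bound. The achievable hyperplane configurations are constrained by the image of the linear map $a\mapsto(\langle u_j,a\rangle)_j \in \RR^K$, which may be a proper subspace; moreover, the quotient $\tilde Y$ may interact nontrivially with the directions $\{v_j\}$. For instance, for the system $S_b = \{x\in\RR : x\leq b_1,\, x\leq b_2\}$ with $Y=\RR^2$, the parameters collapse via $b\sim b'$ iff $\min(b_1,b_2) = \min(b'_1,b'_2)$, so $\tilde Y\cong\RR$ and $s=1$ even though $\dim\mathrm{span}(v_1,v_2)=2$. Ensuring uniformly that the upper-bound exponent $s$ is attained by some $A\subseteq\RR^m$, possibly after further refining the cell decomposition to isolate the directions of $\{v_j\}$ that survive descent to $\tilde Y$, is the key technical step.
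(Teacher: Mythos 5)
Your central claim---that $\pi_\S(t) = \Theta(t^s)$ where $s = \dim(Y/{\sim})$ and $b \sim b'$ iff $S_b = S_{b'}$---is false, and the failure is not where you anticipate it. Consider the set system $\S_1 = \{B_{a,b} : (a,b) \in \RR^2\}$ with $B_{a,b} = \{(a-1, b+1), (a+1, b-1)\}$ a pair of points, discussed in Section~\ref{sec:proofoverview1}. The map $(a,b) \mapsto B_{a,b}$ is injective, so $\sim$ is trivial, $\tilde Y = Y = \RR^2$, and $s = 2$. Yet $\pi_{\S_1}(t) = \Theta(t)$, not $\Theta(t^2)$: each member of $\S_1$ is a pair of the form $\{p, p + (2,-2)\}$, so a point set $A$ of size $t$ admits at most $t$ such pairs as traces and at most $2t$ singletons, hence $|\S_1 \cap A| = O(t)$. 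Compare this with $\S_2 = \{C_{a,b}\}$ (the crosses), which has the same parameter space $\RR^2$, also injectively parameterized, yet $\pi_{\S_2}(t) = \Theta(t^2)$. The correct exponent depends on the \emph{geometric shape} of the fibers $S_b$, not merely on the dimension of the (quotient) parameter space. Your anticipated obstacle (dimension collapse under $\sim$, as in your $\min(b_1, b_2)$ example) is real but secondary; the deeper problem is that even with $\tilde Y$ of full dimension and a perfectly generic chart the lower bound $\Omega(t^s)$ simply need not hold.

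Concretely, the gap in your lower bound is this: even if you arrange a hyperplane arrangement on $\tilde U$ with $\Theta(t^s)$ cells, most of those cells may give the same trace. For $\S_1$, the fiber $\{b : a_i \in S_b\}$ over a single $a_i$ is a two-point set (the intersection of equalities, each of which is two inequalities, over two disjuncts), so $\chi_i$ has zero-dimensional support; of the $\Theta(t^2)$ cells of the arrangement, all but $O(t)$ have $\chi_i \equiv 0$ for every $i$ and collapse to a single trace. The Boolean combination $\Phi$ governing which atoms of your generated Boolean algebra are realized carries essential information, and your argument discards it. This is precisely why the paper's proof of Theorem~\ref{thm:main} does not work only on the parameter side: it decomposes the members $S_b$ themselves by dimension (Sections~\ref{sec: Uniformization} and~\ref{sec: Decomdecon}), passes to families of points and half-lines in a composite space, introduces the forced shatter function to handle inter-index dependence (Section~\ref{sec:indexedshatterfunction}), and extracts the exponent from independence/dependence of associated semilinear hypergraphs via Fact~\ref{lem:dependent-relation}. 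You would also have to confront the subtlety of Proposition~\ref{eg:rational}: the intermediate extremal functions $\delta_E$ are not individually polynomial, so the argument must show the \emph{maximum} over the relevant family is.
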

The notion of weakly locally modular defined in~\cite{berenstein2012weakly} captures the intuition that the structure is ``module-like''. In o-minimal structures, Zilber's trichotomy principle holds, asserting that an o-minimal structure that is not ``module-like'' must be ``field-like'' (interpreting a field). Hence, if an o-minimal structure is not weakly locally modular, then we can encode the earlier example to get VC-density $\frac{4}{3}$. If Conjecture~\ref{q:irrational}(c) holds, the growth rate of the shatter function indeed characterizes the ``module-like''/``field-like'' dividing line. 

We remark that, while we verify Conjecture~\ref{q:irrational} for certain structures (see Theorem~\ref{thm:mainversion2}), the phenomenon is perhaps not very straightforward. Indeed, a related conjecture of Chernikov~\cite{artemconj} concerning the logarithmic density of definable graphs has a negative answer (see Proposition~\ref{eg:rational}).

A characterization of o-minimal  expansions of $(\RR; +)$ (see Facts~\ref{fac: o-min trich}, ~\ref{fac: vectorislinear}, and~\ref{fact: Onebasedorderstructure}) together with Theorem~\ref{thm:mainversion1general}, 
a generalization of Theorem~\ref{thm:main} for ordered vector spaces over an ordered division ring,  implies Theorem~\ref{thm:mainversion2} below. 

\begin{thm} \label{thm:mainversion2}
Let $\mathfrak{R}$ be is an o-minimal structure expanding $(\RR; +, <)$. Then the following are equivalent:
\begin{enumerate}[label = (\alph*)]
    \item $\mathfrak{R}$ is weakly locally modular;
    \item for any set system $\S$ definable in $\mathfrak{R}$, $\pi_\S(t)$ is asymptotic to a polynomial.
\end{enumerate}
\end{thm}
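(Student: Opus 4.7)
My plan is to prove the two directions of the equivalence separately, combining the cited structural characterization of weakly locally modular o-minimal expansions of $(\RR; +, <)$ with the machinery of Theorem~\ref{thm:mainversion1general} on one side and Zilber's trichotomy plus classical incidence bounds on the other.

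For (a) $\Rightarrow$ (b), suppose $\mathfrak{R}$ is weakly locally modular. The characterization assembled from Facts~\ref{fac: o-min trich}, \ref{fac: vectorislinear}, and~\ref{fact: Onebasedorderstructure} identifies $\mathfrak{R}$ with (essentially) an ordered vector space over some ordered division ring. I would then apply Theorem~\ref{thm:mainversion1general} to any set system $\S$ definable in $\mathfrak{R}$, obtaining an integer $s \geq 0$ with $\pi_\S(t) = \Theta(t^s)$; in particular, $\pi_\S(t)$ is asymptotic to a polynomial, which yields (b).

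For (b) $\Rightarrow$ (a), I argue the contrapositive via Zilber's trichotomy. If $\mathfrak{R}$ is not weakly locally modular, Fact~\ref{fac: o-min trich} produces a real closed field $K$ definable in $\mathfrak{R}$ on some open subinterval of $\RR$. Using $K$, I form the unit-inner-product set system already discussed in the introduction: pairs of points in $K^2$ parametrized by
\[ Y = \{(b_1, b_2, b_1', b_2') \in K^4 : b_1 b_1' + b_2 b_2' = 1\}. \]
This set system is definable in $\mathfrak{R}$, and the reduction from~\cite{adhms16} shows its shatter function is $\Theta(I(t))$, where $I(t)$ is the maximum number of incidences between $t$ points and $t$ lines in $K^2$. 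Since $K$ is (by Peterzil--Starchenko) definably isomorphic to the standard real field on an interval, the Szemer\'edi--Trotter upper bound~\cite{st1983} and the Elekes-type lower bound transfer to give $I(t) = \Theta(t^{4/3})$, so $\pi_\S(t) = \Theta(t^{4/3})$. This is not asymptotic to any polynomial, contradicting (b).

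The forward direction is essentially a bookkeeping application of Theorem~\ref{thm:mainversion1general} once the structural characterization is in place. The main obstacle lies in the reverse direction: one has to verify that the Szemer\'edi--Trotter upper bound and the Elekes lower-bound construction both survive the identification of the interpreted field $K$ with the standard reals, so that the incidence bound $\Theta(t^{4/3})$ is genuinely realized by an $\mathfrak{R}$-definable set system rather than only by a semialgebraic one sitting outside the definable category of $\mathfrak{R}$.
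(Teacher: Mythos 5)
Your proposal matches the paper's proof of this theorem (via its generalization Theorem~\ref{thm:mainversion2general}): the forward direction uses Fact~\ref{fac: o-min trich} and Fact~\ref{fact: Onebasedorderstructure} to pass to an elementary extension that is a reduct of an ordered vector space (with the shatter function preserved under elementary equivalence) and then applies Theorem~\ref{thm:mainversion1general}, while the reverse direction uses Zilber's trichotomy to interpret a real closed field and produce a definable family with shatter function $\Theta(t^{4/3})$. The only cosmetic difference is that the paper invokes Fact~\ref{fac: vcnonint} (Proposition~4.6 of~\cite{adhms16}) as a black box for the $\Theta(t^{4/3})$ family, whereas you re-derive it via the unit-inner-product set system and Szemer\'edi--Trotter; the transfer issue you flag is exactly what that cited fact resolves.
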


Our result confirms Conjecture~\ref{q:irrational}(c) for expansions of $(\RR; +, <)$. Via the exponential map, one can deduce the same result for expansions of $(\RR; \times, <)$; see Corollary~\ref{cor:multversion}. Moreover, geometric stability results about weakly locally modular o-minimal structures tell us that such structures are in a Euclidean neighborhood of every point at most as complicated as an elementary extension of the structures treated in Theorem~\ref{thm:mainversion2}. This provides evidence that Conjecture~\ref{q:irrational}(c) or even more  general statements (for settings with Zilber's trichotomy and analogs of geometric stability results) are true.

We remark that resolving  Conjecture~\ref{q:irrational}(a,b) would require a deeper understanding of the VC-density of semialgebraic set systems, such as those arising from the Erd\H{o}s unit distance conjecture discussed earlier. 
An analysis similar to ours could potentially reduce Conjecture~\ref{q:irrational}(a,b) to suitable statements about incidences. 

\paragraph{Acknowledgments}

We would like to thank Artem Chernikov and Sergei Starchenko for valuable discussions during the early stages of this work.

\section{Proof overview and organization}
\label{sec:proofoverview}

The starting point of our proof is a strategy from~\cite{adhms16} which relates the shatter function of a certain set systems to the number of edges in a suitable graph.
Suppose $\S$ is a set system on $\RR^m$ all of whose members have cardinality at most two.
Let $G := G(\S) = (X, Y; E)$ be the bipartite graph with $X = Y = \RR^m$ and $E = \{(a_1, a_2) \in X \times Y : \{a_1, a_2\} \in \S \}$. For a bipartite graph $G = (X, Y; E)$ and $V \subseteq X \sqcup Y$, we set
$$ G[V] := (X \cap V, Y \cap V, E \cap (V \times V)) \quad\text{and}\quad \delta_G(t) := \max_{\substack{V \subseteq X \cup Y\\|V| = t}} |E(G[V])|.$$
A consequence of~\cite{adhms16}*{Lemma~4.1} is that $\pi_\S(t) = \Omega(\delta_G(t))$ and that $\pi_\S(t) = {O(\delta_G(t) + t)}$.
Note that if $\S$ has finitely many members, then $\pi_\S(t) = \Theta(1)$. Otherwise, if $\S$ has infinitely many members, then it is an easy result that $\delta_G(t) = \Omega(t)$. Hence, for any set system $\S$ with infinitely many members, $\pi_\S(t) = \Theta(\delta_G(t))$.

Suppose now that $\S$ is a semilinear set system. Then, by quantifier elimination for ${(\RR, <, +)}$ (see, e.g.,~\cite{vandries1998}*{Corollary 7.8}), $E$ is a semilinear set in $\RR^{2m}$. A result of Basit, Chernikov, Starchenko, Tao, and Tran~\cite{bcstt2021}  (see Fact~\ref{lem:dependent-relation}) allows us to show that $\delta_G(t)$ is asymptotic to a polynomial. This proves Theorem~\ref{thm:main} for set systems whose members consist of at most two elements.

Unfortunately, this strategy does not easily generalize, even for a set system  $\S$ whose members have carnality at most three. We can similarly construct a $3$-uniform hypergraph $G$ such that $\pi_\S(t) = \Omega(\delta_G(t))$ and $\pi_\S(t) = O(\delta_G(t) + t^2)$ (where $\delta_G$ for hypergraphs is defined similarly). It is possible that $\delta_G(t) = O(t)$, in which case we obtain $\pi_\S(t) = \Omega(t)$ and $\pi_\S(t) = O(t^2)$, with no obvious way of closing the gap.

Our approach to the proof of Theorem~\ref{thm:main} (put together in Section~\ref{sec:mainproof}) consists of two steps.
In Step~1 (Sections~\ref{sec:compindex} and~\ref{sec:simple}), we consider set systems with relatively simple descriptions, referred to as \emph{simple set systems} (see Definition~\ref{def:simple}). Roughly, these are systems whose members are unions of points and half-lines (at most 1-dimensional objects). Extending the ideas described above, we show that the shatter function of a simple set system must be asymptotic to a polynomial.
In Step~2 (Sections~\ref{sec: Uniformization} and~\ref{sec: Decomdecon}), we show that the shatter function of any given semilinear set system is asymptotic to the shatter function of a simple set system.

Below, we provide a brief overview of the key ideas involved in each step. For notational convenience and to better illustrate the definition of a simple set system, we discuss Step 2 first, followed by Step 1.

\subsection{Step 2: Uniformization, decomposition, and deconstruction}
\label{sec:proofoverview1}

The asymptotic behavior of the shatter function depends on the ``geometric shape'' of the members of the set system under consideration and any analysis must take this into account. To see this, consider the following set systems with with the same parameter set:
$$ \S_1 = \left\{ B_{a,b} \subseteq \RR^2: (a,b)\in \RR^2 \right\} \quad\text{ and }\quad \S_2 = \left\{ C_{a,b}  \subseteq \RR^2: (a, b) \in \RR^2 \right\},$$
where $B_{a,b} = {\{ (a-1,b+1), (a+1, b-1) \}}$ is a pair of points centered at $(a,b)$ and $C_{a,b} = \{ (a, y) : y \in \RR  \} \cup \{(x,b) :x \in \RR\}$  is an \emph{infinite cross} centered at $(a,b)$. Despite the set systems having the same parameter set, we have $\pi_{\S_1}(t) = \Theta(t)$ while $\pi_{\S_2}(t) = \Theta(t^2)$. This can be verified directly, though it is not entirely straightforward, even in this relatively simple case. In one direction, the inequalities $\pi_{\S_1}(t) \geq t + 1$ and $\pi_{\S_2}(t) \geq \binom{t}{\leq 2}$ follow by considering the set 
$ A = \left\{ (k, k) \in \NN^2: 1 \leq k \leq t \right\}$.

Below, we outline our indirect approach to bounding the shatter function, which is generalizable, using $\S_1$ and $\S_2$ as illustrative examples. The idea is to decompose the members of a set system into simpler objects whose shatter functions are easier to estimate. For instance, each member of $\S_1$ will be decomposed into two points and each member of $\S_2$ will be decomposed into two lines. 

In Section~\ref{sec:composite}, we introduce the notion of \emph{composite spaces}. For the examples of $\S_1$ and $\S_2$, let $V = \RR^2 \sqcup \RR^2$ be the disjoint union of two copies of $\RR^2$; we may identify $V$ with the subset $\left(\RR^2 \times \{1\}\right) \cup \left(\RR^2 \times \{2\}\right)$ of $\RR^3$ but will avoid doing so for notational simplicity. We refer to $V$ as a \emph{composite space} and take $I = \{1, 2\}$ to be the \emph{index set} of $V$. We use $I$ to refer to the components of the composite space, so the two component spaces of $V$ are $V[1] = \RR^2$ and $V[2] = \RR^2$.

We will relate the shatter function of the set system $\S_1$ and $\S_2$ to those of the following set systems in the composite space $V = \RR^2 \sqcup \RR^2$:
$$ \S'_1 = \left\{ B'_{a,b}  \subseteq V: (a,b)\in \RR^2 \right\} \quad\text{and }\quad \S'_2 = \left\{ C'_{a,b}\subseteq V: (a, b) \in \RR^2 \right\},$$
where $B'_{a,b} = \{ (a-1, b+1) \} \sqcup \{ (a+1, b-1)\}$ and  $C'_{a,b} = \{ (a, y) : y \in \RR \} \sqcup \{(x,b) :x \in \RR\}$. Here $\S_1$ is a simple family, and we remark that the shatter function of set system $\S'_2$ is the same as the shatter function of the simple family $\{ \{ a \} \sqcup \{ b \} \subseteq \RR \sqcup \RR : (a, b) \in \RR^2\}$.

Before proceeding, it will be instructive to compare $\S'_1$ to the set system  
$$ \S''_1 = \left\{ \{(a_1 - 1, b_1 + 1)\} \sqcup \{(a_2 + 1, b_2 - 1)\} \subseteq V: (a_1, b_1), (a_2, b_2) \in \RR^2 \right\}.$$
We extend, in the natural manner, the use of the index set $I$ to subsets of the composite space. 
Note that if $B' \in \S'_1$ satisfies $B'[1] = \{(a, b)\}$ for a given $(a, b) \in \RR^2$, then $B'[2] = \{(a+2, a-2)\}$, i.e., one of the coordinates determines the other one. On the other hand, for a given $(a, b) \in \RR^2$, we have $\{B'' \in \S''_2 : B''[1] = \{(a, b)\} \} \cong \RR^2$. Using this observation, it is possible to argue that $\pi_{S'_1}(t) = \Theta(t)$ while $\pi_{S''_1}(t) = \Theta(t^2)$. Hence, it makes sense to compare $\S_1$ to $\S'_1$ and not $S_1$ to $\S''_1$.

Moving forward, that $\pi_{\S_1}(t) = \Theta(t)$ and $\pi_{\S_2}(t) = \Theta(t^2)$ follow from three facts:
\begin{enumerate}[label = (\alph*)]     \item\label{itm:posimplebound} $\pi_{\S'_1}(t) = \Theta(t)$ and $\pi_{\S'_2}(t) = \Theta(t^2)$;
    \item\label{itm:poupperbound} $\pi_{\S_1}(t) \leq \pi_{\S'_1}(2t)$ and  $\pi_{\S_2}(t) \leq \pi_{\S'_2}(2t)$;
    \item\label{itm:polowerbound} $\pi_{\S_1}(t) \geq \pi_{\S'_1}(t)$ and  $\pi_{\S_2}(t) \geq \pi_{\S'_2}(t)$.
\end{enumerate}
The ideas going into the proof of~\ref{itm:posimplebound} are the subject of Step 1 outlined in Section~\ref{sec:proofoverview2}. That~\ref{itm:poupperbound} holds follows from the definitions and the observation that $|\S_i \cap A| \leq |\S_i \cap (A \sqcup A)|$ --- this will be discussed in Section~\ref{sec:composite}. 

From here on, we focus on~\ref{itm:polowerbound} and its counterpart for an arbitrary set system, arguing informally that $\S_i$ is ``more complex'' than $\S_i'$. 
First, observe that restricting to a subset of the parameter space can only decrease the shatter function.
In particular, with $U = (0,1)^2 \subseteq \RR^2$, 
$\S_2$ is ``more complex'' than the set system
$$\S_2\upharpoonright U := \left\{ C_{a,b}  \subseteq \RR^2: (a, b) \in U \right\}.$$

If we look at members of the set system $\S_2\upharpoonright U$ through the ``window'' $W_1 = (0, 1) \times (1, 2)$, we see vertical intervals, which form a set system as complex as the family of vertical lines 
$  \{ (a, y) : y \in \RR \}_{(a, b)\in \RR^2} $. Likewise, though the window $W_2 = (1, 2) \times (0,1) $, the set $\S_2\upharpoonright U$ is  as complex as the family of horizontal lines 
$  \{ (a, y) : y \in \RR \}_{(a, b)\in \RR^2} $. Thus $\S_2$ is more complex than the individual components of $\S'_2$. 

To compare $\S_2$ to $\S'_2$, we introduce the \emph{clone} (Definition~\ref{def:clones}) of the $\S_2$ given by
$$\tilde{\S}_2 = \left\{ C_{a,b} \sqcup C_{a,b}  \subseteq V: (a, b) \in \RR^2 \right\}.$$
In Section~\ref{sec:compindex}, we show the rather intuitive fact that $\pi_{\S_2}(t) = \pi_{\tilde{S}_2}(t)$. 
It now suffices to show that $\tilde{S_2}$ is more complex than $\S'_2$, which is achieved by considering $\tilde{S_2} \upharpoonright U$ and the ``combined window'' $W = W_1 \sqcup W_2$.

A similar argument works for~$S_1$, with $U = (0, 1)^2$ and $W = (-1, 0) \times (1, 2) \sqcup (1, 2) \times (-1, 0)$.

An arbitrary set system $\S$ is handled similarly, but requires a lot more work. We first use a compactness argument from model theory to reduce to the case where $\S$ is ``sufficiently uniform''. For instance, all members of $\S$ have the same dimension, 
and $\S$ does not contain both a triangle and a rectangle at the same time.
The precise definition is rather technical; see Section~\ref{sec: Uniformization} for details. 
Finally, the above restriction and window arguments, detailed in Section~\ref{sec: Decomdecon}, are applied repeatedly via induction on dimension to compare $\S$ to a simple set system.

\subsection{Step 1: Shatter functions of simple set systems}
\label{sec:proofoverview2}

Recall that the members of a simple set system consist of points and half-lines. We are able to understand their shatter functions using purely combinatorial methods.

The following example of a set system, whose members consist of three elements, elaborates on the difficulties mentioned earlier and our approach to resolve it.
Let $V = \RR \sqcup \RR \sqcup \RR^2$ be the composite space indexed by the set $I = \{1, 2, 3\}$ and let \[ \S_3 = \left\{ \{x\} \sqcup \{y\} \sqcup \{(x, y)\} \subseteq V : (x, y) \in \RR^2 \right\}. \] Considering the set $A = \{ \{k\} \sqcup \{k\} \sqcup \{(k, k)\} : k \in \NN, 1 \leq k \leq t  \}$ implies that $\pi_{\S_3}(t) \geq |\S_3 \cap A| = \Omega(t^2)$.
On the other hand, we have
\begin{equation}
\label{eq:3pointeg} \pi_{\S_3}(t) \leq \prod_{i \in I} \pi_{\S_3[i]}(t) = O(t^3).
\end{equation}
However, it can be verified directly that $\pi_{\S_3}(t) = \Theta(t^2)$. Roughly speaking, \eqref{eq:3pointeg} does not hold with equality because of the ``dependence'' of different indices on each other. Let $S \in \S_3$ and note that $S[3]$ determines $S[1]$ and $S[2]$. Equivalently, $S[1]$ and $S[3]$ together determine $S[3]$. This behavior shows up, for instance, if we consider the shatter functions of different subfamilies of $\S_3$:
\begin{enumerate}[label = (\alph*)]
    \item $\pi_{\S_3[1]}(t) = \Theta(t)$, $\pi_{\S_3[2]}(t) = \Theta(t)$, and $\pi_{\S_3[3]}(t) = \Theta(t)$;
    \item $\pi_{\S_3[\{1, 2\}]}(t) = \Theta(t^2)$;
    \item  $\pi_{\S_3[\{1, 3\}]}(t) = \Theta(t)$ and $\pi_{\S_3[\{2, 3\}]}(t) = \Theta(t)$.
\end{enumerate}

In Section~\ref{sec:indexedshatterfunction}, we introduce the \emph{forced shatter function} which captures this dependence. 
Given $J \subseteq I$ and $A \subset V$, the $J$-forced shatter function of a set system $\S$ consists of members of $\S \cap A$ that have non-empty intersection with $V[i]$ for each $i \in J$.
Restricting to such a subset makes the dependence explicit; e.g., for the set system $\S_3$, requiring that a point of $A \subseteq V$ is picked out in $V[3]$ determines the intersection pattern in $V[1]$ and $V[2]$, explaining item (c) above. In Lemma~\ref{lem:indexeddensity}, we show that the shatter function of $\S$ is asymptotic to the maximum of the forced shatter functions.

Next, we reduce the problem of bounding the forced shatter function to that of counting the number of edges in a certain semilinear hypergraph. For a simple set system whose elements consist only of points, such as $\S_3$, this is essentially the approach as described in the beginning of this section. The forced shatter function is asymptotic to $\delta_G(t)$, where $G := G(\S)$ is an $r$-partite $r$-uniform hypergraph and $\delta_G(t)$ is defined analogously to the graph case.

From the preceding two paragraphs, we obtain that the shatter function is  asymptotic to the maximum of $\delta_G$ where $G$ ranges over a certain family of semilinear hypergraphs. In particular, note that we have to consider a family of hypergraphs, instead of a single graph as in~\cite{adhms16}.
Unfortunately, this is not enough since individual $\delta_G$ may not be polynomials (see Proposition~\ref{eg:rational}). To get around this subtle issue, in Section~\ref{sec:extremalfunction}, we use the definition of these hypergraphs to show that, while the individual $\delta_G$ may be rational, the maximum must be an integer.

The definition of the hypergraph $G$ relies heavily on the description of the members of the set system. To handle the case when a simple set system contains a half-line, the arguments in Section~\ref{sec:simple} need to be significantly more involved.
The essential idea here is to reduce this situation to a finitary setting. For illustration, consider the set systems
$$ \S_4 = \left\{ U_{a,b} \subseteq \RR^2: (a,b) \in \RR^2 \right\} \quad\text{ and }\quad \S_5 = \left\{ V_{a,b} \subseteq \RR^2: (a,b) \in \RR^2 \right\},$$
where  $U_{a,b} = \{ (x, y) : x = a \text{ and } y \geq b \}$ and $V_{a,b} = \{ (x, y) : x = a+b \text{ and } y \geq 0 \}$ are half-lines.

Fix a point set $A \subseteq \RR^2$. Note that, for every $A' \in A \cap \S_4$, there exists $(x, y) \in \RR^2$ such that $(x, y) \in A'$ and $U_{x,y} \cap A = A'$. In words, the half-line $U_{x,y}$ picks out the subset $A' \subseteq A$ and has a point of $A'$ as its end point. In fact, the point $(x, y)$ determines the subset of $A$ that is picked out, and, hence, we may use $U_{x, y}$ as a witness of the fact that $A' \in A$. 

On the other hand, for a set system such as $\S_5$, this is not always possible. Indeed, fix ${t \in \NN}$ and consider the point set $A = \left\{ (k, 1) : 1 \leq k \leq t \right\}$. Then we are unable to find witness points as in the preceding paragraph. However, in this case the first coordinate determines the subset of $A$ that is picked out. Hence, we are able to ignore the second coordinate, which allows us to find a witness.

It turns out that the possibilities described above are essentially the only ones, which is made precise in the proof of Lemma~\ref{lem:cubevc2}. This leads us to the definition of a \emph{critical point} allowing us to define a semilinear hypergraph as in the finitary case (see Definitions~\ref{def:critical}~and~\ref{def:criticalgraph}). Finally, in Lemmas~\ref{lem:cubevc1}~and~\ref{lem:cubevc2}, we bound the forced shatter function in terms of the number of edges of this semilinear hypergraph.

\section{Families in composite spaces and forced shatter functions}
\label{sec:compindex}

For the setting under consideration, it will be notationally more convenient to work with \emph{parameterized families} instead of set systems. 
Given sets $V$ and $Y$ we say $\X$ is a \emph{family} in $V$ \emph{parameterized by} $Y$ if $\X$ is of the form
$$\X = (X_b)_{b \in Y} \text{ where } X_b \subseteq V \text{ for each } b \in Y.$$
The family $\X$ induces a set system $\S = \S(\X) = \{ X_b: b \in Y \}$ on $V$. Given a set $A \subseteq V$, we let $\X \cap A = \S \cap A = \{ X_b \cap A: b \in Y\}$. The \emph{shatter function} $\pi_\X$ is  
$$  \pi_\X(t) := \max\{ |\X \cap A|: A \subseteq V, |A| = t \}. $$ Hence, the notions of shatter function of a family and shatter function of a set system are linked in the obvious manner, i.e., $\pi_\X = \pi_{\S(\X)}$. 

The results in this section do not rely on the set system being semilinear. We adopt this more general setting because these results could prove useful in studying the shatter functions of definable families in other structures.

\subsection{Composite spaces and families in composite spaces}
\label{sec:composite}

Let $I$ be a finite set. A \emph{composite space} $V$ \emph{indexed} by $I$ is the disjoint union $V = \bigsqcup_{i \in I} V[i]$ where, for every $i \in I$,  $V[i]$ is a set which we refer to as a \emph{space} and whose elements we refer to as \emph{points}. We refer to $I$ as the \emph{index set} of the composite space $V$. For $J \subseteq I$, we let $V[J] := \bigsqcup_{i \in J} V[i]$, which is naturally a composite space indexed by $J$. In particular, we have $V=V[I]$.

Let $V$ be a composite space indexed by $I$. For $A \subseteq V$ and $i \in I$, we write $A[i]$ for $A \cap (V[i])$. Suppose $\X = (X_b)_{b \in Y}$ is a family in (the composite space) $V$. For $i \in I$, we set $\X[i]$ to be the (usual) family $\X[i] := (X_b[i])_{b \in Y}$ in $V[i]$. For $J \subseteq I$, we set $A[J] := \bigsqcup_{i \in J} A[j]$, and $\X[J] := \bigsqcup_{i \in J} \X[i]$. Hence, $A[J] \subseteq V[J]$ and $\X[J]$ is a family in the composite space $V[J]$ indexed by $J$. 

Suppose $V$ is a composite space indexed by $I$ and $V'$ is a composite space indexed by $I'$. It will often be the case that, for $i \in I$ and $i' \in I'$, the spaces $V[i]$ and $V'[i']$ are copies of the same set $U$.  As we do not want to make the bijection between these sets explicit, we will write
$V[i] \heq V'[i']$ to denote that $V[i]$ and $V'[i']$ are copies of the same set. Likewise, we write $a \heq b$ if $a \in V[i]$ and $b \in V'[i']$ correspond to the same element of $U$, $A \heq B$ if $A \subseteq V[i]$ and $B \subseteq V'[i']$ corresponds to the same subset of $U$, and $a \hat{\in} B$ if $a \in V[i]$, $B \subseteq V'[i']$, and $a \heq b $ for some $b \in B$.

We  collect some useful facts about  families in composite spaces.
\begin{lem}
Let $\X$ be a family in the composite space $V$ indexed by $I$.
\label{lem:vcsubfamily}
\begin{enumerate}[label=(\alph*)]
\item \label{itm:vcsubfamily} If $J \subseteq I$, then $\pi_{\X[J]}(t) \leq \pi_{\X[I]}(t)$ for every $t \geq 1$.
\item \label{itm:addingfamilies} Let $I_1, I_2 \subseteq I$ be such that $I = I_1 \cup I_2$. Then  $\pi_{\X[I]}(t) \leq \pi_{\X[I_1]}(t)\pi_{\X[I_2]}(t)$ for every $t \geq 1$.
\end{enumerate}
\end{lem}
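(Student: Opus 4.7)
The plan is to unwind the definitions governing the composite-space notation; no combinatorial content beyond a pigeonhole is needed. The single observation powering both parts is this: whenever $A \subseteq V[J]$, the components $X_b[i]$ with $i \notin J$ are disjoint from $A$, so $X_b[J] \cap A = X_b \cap A$ for every $b \in Y$. Monotonicity of the shatter function in $t$ (any subset of $V$ of size $\leq t$ extends to one of size $t$, and extending never decreases the trace count) will be used as a standard fact.

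For part \ref{itm:vcsubfamily}, I would fix $A \subseteq V[J]$ with $|A| = t$ realising $\pi_{\X[J]}(t)$ and regard it as a size-$t$ subset of $V = V[I]$. The observation above gives $\X[J] \cap A = \X[I] \cap A$ as set systems on $A$, so $|\X[J] \cap A| \leq \pi_{\X[I]}(t)$; taking the maximum over $A$ finishes this part.

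For part \ref{itm:addingfamilies}, I would fix $A \subseteq V$ with $|A| = t$ realising $\pi_{\X[I]}(t)$ and set $A_j := A[I_j]$ for $j = 1,2$. Because $I = I_1 \cup I_2$, we have $V = V[I_1] \cup V[I_2]$ and hence $A = A_1 \cup A_2$. I would then consider the map
\[ f : \X[I] \cap A \longrightarrow (\X[I_1] \cap A_1) \times (\X[I_2] \cap A_2), \qquad X_b \cap A \longmapsto (X_b \cap A_1,\; X_b \cap A_2), \]
observing that $f$ lands in the stated codomain (since $A_j \subseteq V[I_j]$ forces $X_b \cap A_j = X_b[I_j] \cap A_j$ by the key observation) and that $f$ is injective (since $Z = (Z \cap A_1) \cup (Z \cap A_2)$ whenever $Z \subseteq A = A_1 \cup A_2$). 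This gives $|\X[I] \cap A| \leq |\X[I_1] \cap A_1| \cdot |\X[I_2] \cap A_2|$, and invoking $|A_j| \leq t$ together with monotonicity of the shatter function on the right yields the claimed bound.

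I do not anticipate a genuine obstacle: the argument is entirely definitional, and the only care required is in the bookkeeping of which index set each object lives in, plus the passing appeal to monotonicity.
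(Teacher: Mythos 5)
Your proposal is correct and follows essentially the same route as the paper: the paper also declares part~(a) immediate from definitions and, for part~(b), writes down the inequality $|\X \cap A| \leq |\X \cap A_1|\,|\X \cap A_2| = |\X[I_1]\cap A_1|\,|\X[I_2]\cap A_2| \leq \pi_{\X[I_1]}(t)\,\pi_{\X[I_2]}(t)$, which is exactly the injective-map argument you spelled out. Your version just makes explicit the key observation ($X_b \cap A = X_b[J] \cap A$ when $A \subseteq V[J]$) and the injectivity of the pairing map, which the paper leaves implicit.
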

\begin{proof}
\ref{itm:vcsubfamily} is immediate from the definitions. We now prove \ref{itm:addingfamilies}. Let $A \subseteq V$ be a set of $t$ points. Set $A_1 = A[I_1] \subseteq V[I_1]$ and $A_2 = A[I_2]  \subseteq V[I_2]$. Clearly, we have 
\[ |\X \cap A| \leq \left|\X \cap A_1\right| \left|\X \cap A_2\right| = \left|\X[I_1] \cap A_1\right| \left|\X[I_2] \cap A_2\right| \leq \pi_{\X[I_1]}(t) \pi_{X[I_2]}(t). \]
The assertion follows from the above inequality.
\end{proof}

%%%%%%%

In our proofs, we will need to perform various operations on a given family in a composite space. We now describe these operations and consider the effects of these operations on the shatter function. 

For the rest of this section, let $\X$ be a family in the composite space $V$ indexed by $I$. The simplest operation, which we refer to as \emph{cloning}, roughly speaking, replaces each index in $I$ and each space in $V$ with multiple copies of itself.
\begin{defn}[Clones]
\label{def:clones}
Let $\ell \in \NN^{\geq 1}$. The \emph{$\ell$-clone} of $V$ is the composite space $V^{(\ell)}$ indexed by $I \times [\ell]$ such that
$$V^{(\ell)}[(i, j)] \heq V[i] \text{ for every } (i, j) \in I \times [\ell].$$
The \emph{$\ell$-clone} of $\X$ is the family $\X^{(\ell)} = (X^{(\ell)}_b)_{b \in Y}$ in the composite space $V^{(\ell)}$ such that $$X^{(\ell)}_b[(i, j)]\heq X_b[i] \text{ for every } (i, j) \in I \times [\ell] \text{ and } b \in Y.$$
\end{defn}

Intuitively, the cloning operation preserves the shatter function since if a subset can be picked out in a \emph{cloned index}, then it could certainly be picked out in the original one. This intuition is formalized in the following lemma.
\begin{lem} \label{lem:Cloning}
Let $\ell \in \NN^{\geq 1}$. Then $\pi_{\X^{(\ell)}}(t) = \pi_{\X}(t)$ for every $t \geq 1$.
\end{lem}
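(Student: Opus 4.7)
The plan is to prove the two inequalities $\pi_{\X^{(\ell)}}(t) \ge \pi_{\X}(t)$ and $\pi_{\X^{(\ell)}}(t) \le \pi_{\X}(t)$ separately, in each case by transferring an extremal witness set between $V$ and $V^{(\ell)}$ and showing that the parameter equivalence relations $b \sim_A b' \iff X_b \cap A = X_{b'} \cap A$ match up. The key observation driving both directions is that, under the identification $V^{(\ell)}[(i,j)] \heq V[i]$, the trace $X_b^{(\ell)}[(i,j)] \cap A'[(i,j)]$ is entirely determined by $X_b[i]$ and the set of points of $V[i]$ represented in $A'[(i,j)]$.

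For the lower bound $\pi_{\X^{(\ell)}}(t) \ge \pi_{\X}(t)$, I would fix $A \subseteq V$ with $|A| = t$ realizing $\pi_{\X}(t)$, and define $A' \subseteq V^{(\ell)}$ by placing, for each $i \in I$ and each $a \in A[i]$, a copy of $a$ into $V^{(\ell)}[(i,1)]$ (leaving $A'[(i,j)] = \emptyset$ for $j \ge 2$). Then $|A'| = t$, and from the $\heq$ identifications one gets $X_b^{(\ell)} \cap A' = X_{b'}^{(\ell)} \cap A'$ if and only if $X_b \cap A = X_{b'} \cap A$, yielding $|\X^{(\ell)} \cap A'| = |\X \cap A| = \pi_{\X}(t)$.

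For the upper bound $\pi_{\X^{(\ell)}}(t) \le \pi_{\X}(t)$, I would take an arbitrary $A' \subseteq V^{(\ell)}$ with $|A'| = t$ and define its \emph{projection} $A \subseteq V$ by letting $A[i]$ be the set of points of $V[i]$ that correspond (under $\heq$) to at least one point in $\bigcup_{j \in [\ell]} A'[(i,j)]$. Then $|A| \le |A'| = t$. The crucial step is to verify that $X_b \cap A = X_{b'} \cap A$ implies $X_b^{(\ell)} \cap A' = X_{b'}^{(\ell)} \cap A'$: for each $(i,j)$, the identification $A'[(i,j)] \hookrightarrow A[i]$ makes $X_b^{(\ell)}[(i,j)] \cap A'[(i,j)]$ a function of $X_b[i] \cap A[i]$, and the latter equals $X_{b'}[i] \cap A[i]$ by hypothesis. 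Consequently the equivalence on parameters induced by $A$ refines that induced by $A'$, giving $|\X^{(\ell)} \cap A'| \le |\X \cap A| \le \pi_{\X}(|A|) \le \pi_{\X}(t)$, where the last step uses the elementary monotonicity of $\pi_{\X}$.

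The only possible obstacle is notational: keeping the $\heq$ correspondences between $V[i]$ and $V^{(\ell)}[(i,j)]$ honest and unambiguous when comparing traces. Conceptually nothing is happening beyond the fact that cloning replaces each space by several bookkeeping copies of itself and does not enlarge the family of realizable traces, so I expect the proof to be short once the identifications are pinned down carefully.
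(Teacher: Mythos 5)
Your proof is correct and takes essentially the same approach as the paper's: the easy direction by embedding $A$ into a single clone layer (which the paper handles by citing Lemma~\ref{lem:vcsubfamily}\ref{itm:vcsubfamily}), and the hard direction by collapsing $A' \subseteq V^{(\ell)}$ to its $\heq$-union $A \subseteq V$ and showing the trace-equivalence on parameters induced by $A$ refines the one induced by $A'$, then applying monotonicity of $\pi_\X$. The paper phrases the key step in contrapositive form, but the content is identical.
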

\begin{proof}
It is immediate from Lemma~\ref{lem:vcsubfamily}\ref{itm:vcsubfamily} that $\pi_{\X}(t) \leq \pi_{\X^{(\ell)}}(t)$. We now show the other direction. Let $A \subseteq V^{(\ell)}$ be a set of $t$ points and set $A' \subseteq V$ be the point set corresponding to $\bigsqcup_{i \in I} \left(\bigcup_{j \in [\ell]} A[(i, j)]\right)$. More precisely, $A' \subseteq V$ is such that, for each $i \in I$, $$A'[i] = \bigcup_{j \in [\ell]} A'_{ij} \text{ with }A'_{ij} \heq A[(i,j)].$$
Since $|A'| \leq |A| = t$ and the shatter function is increasing in $t$, we obtain $|\X \cap A'| \leq \pi_{\X}(|A'|) \leq \pi_{\X}(t)$. Hence, the assertion of the lemma follows from $|\X^{(\ell)} \cap A| \leq |\X \cap A'|$.

Let $a, b \in Y$ be such that $X^{(\ell)}_a \cap A\neq  X^{(\ell)}_b \cap A$. Then, without loss of generality, there exists $(i_0, j_0) \in I \times [\ell]$ such that $\left(X^{(\ell)}_a \setminus X^{(\ell)}_b\right) \cap A[(i_0, j_0)]$ is non-empty. Noting that $\left(X^{(\ell)}_a \setminus X^{(\ell)}_b\right) \cap A[(i_0, j_0)]$ can be identified with a subset of $\left(X_a \setminus X_b\right) \cap A'[i_0]$, we obtain $X_a \cap A' \neq X_b \cap A'$. It follows that $|\X^{(\ell)} \cap A| \leq |\X \cap A'|$, which completes the proof.
\end{proof}

%%%%%%%%%

Another operation we require consists of ``augmenting'' a family in a composite space with a family whose members consist of a fixed Boolean combination of members of the original family. 
\begin{defn}[Boolean extensions]
\label{def:booleanextension}
Let $J = \{ j_1, \dots, j_\ell \} \subseteq I$ be such that $V[j]\heq V[j']$ for every $j, j' \in J$.
A family $\X' = (X'_b)_{b \in Y}$ in $V'[I']$ is a \emph{one-step Boolean extension} of $\X$ if the following hold:
\begin{enumerate}[label = (\alph*)]
  \item $I' = I \cup \{i'\}$;
    \item $V'[I] = V[I]$ and $\X'[I] = \X[I]$; 
    \item $V[i'] \heq V[j_1]$;
    \item there exists a Boolean function $\Phi : \{0, 1\}^{\ell} \rightarrow \{0, 1\}$ such that, for every $b \in Y$, $$ X'_b[i'] = \left\{ x \in V'[i']: \Phi\left( \mathds{1}_{x \hat{\in} X_b[j_1]}, \dots, \mathds{1}_{x \hat{\in} X_b[j_{\ell}]} \right) = 1\right\}. $$
\end{enumerate}
We say $\X'$ is a \emph{Boolean extension} of $\X$ if $\X'$ can be obtained from $\X$ by taking finitely many one-step Boolean extensions.
\end{defn}
\begin{lem}
\label{lem:Booleancombination}
Suppose $\X'[I'] = (X'_b)_{b \in Y}$ is a Boolean extension of $\X$. Then  there is a constant $r \in \NN^{\geq 1}$ such that
$\pi_{\X}(t) \leq \pi_{\X'}(t) \leq \pi_{\X}(r t)$ for all $t \geq 1$.
\end{lem}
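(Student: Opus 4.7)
The lower bound $\pi_\X(t) \le \pi_{\X'}(t)$ is immediate from Lemma~\ref{lem:vcsubfamily}\ref{itm:vcsubfamily}, since $\X = \X'[I]$ and $I \subseteq I'$. For the upper bound, I would first reduce to the one-step case: if $\X'$ is obtained from $\X$ by a sequence of $N$ one-step Boolean extensions with parameters $r_1, \ldots, r_N$, then iterating the one-step bound yields $\pi_{\X'}(t) \le \pi_{\X}(r_1 r_2 \cdots r_N \cdot t)$, so it suffices to treat one-step extensions.

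So assume $\X'$ is a one-step Boolean extension of $\X$ with new index $i'$, set $J = \{j_1, \ldots, j_\ell\} \subseteq I$, and Boolean function $\Phi$. Given $A' \subseteq V'$ with $|A'| = t$, the plan is to construct a ``companion'' set $A \subseteq V$ whose size is at most $\ell t$ and which dominates $A'$ in the sense that $|\X' \cap A'| \le |\X \cap A|$. Concretely, define $A$ by
\[
A[i] = A'[i] \text{ for } i \in I \setminus J, \qquad A[j_k] = A'[j_k] \cup \bigl\{y \in V[j_k] : y \heq x \text{ for some } x \in A'[i']\bigr\} \text{ for } k \in [\ell].
\]
That is, we take the points of $A'$ inside the old composite space $V$ and, for each of the $\ell$ indices $j_k$, add copies of the points of $A'[i']$. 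A direct count gives $|A| \le |A'[I]| + \ell\,|A'[i']| \le \ell \cdot |A'| = \ell t$ (assuming $\ell \ge 1$; if $\ell = 0$ then $\Phi$ is constant and the result is trivial with $r=1$). So we may take $r = \ell$.

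It remains to verify that $|\X' \cap A'| \le |\X \cap A|$, which I would do contrapositively: suppose $b, b' \in Y$ satisfy $X_b \cap A = X_{b'} \cap A$; I want to conclude $X'_b \cap A' = X'_{b'} \cap A'$. For any index $i \in I$, the restriction to $V[i]$ coincides for $\X$ and $\X'$, so agreement on $A[i] \supseteq A'[i]$ transfers. For the new index $i'$ and any $x \in A'[i']$, by construction there are points $y_k \in A[j_k]$ with $y_k \heq x$ for each $k \in [\ell]$. The defining equivalence $y_k \in X_b[j_k] \iff x \,\hat{\in}\, X_b[j_k]$ (and the same for $b'$) then forces $\mathds{1}_{x \hat{\in} X_b[j_k]} = \mathds{1}_{x \hat{\in} X_{b'}[j_k]}$ for each $k$, whence the two Boolean combinations agree and $x \in X'_b[i'] \iff x \in X'_{b'}[i']$.

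The argument is essentially bookkeeping, and I do not anticipate a serious obstacle; the only delicate point is keeping the identifications under $\heq$ straight (so that ``copying'' a point of $A'[i']$ into each $V[j_k]$ faithfully reflects the inputs to $\Phi$), and confirming that the blow-up factor $r$ depends only on the Boolean extension, not on $t$. The induction on the number of one-step extensions then produces an $r$ independent of $t$, proving the stated bound.
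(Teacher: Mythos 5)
Your proof is correct and follows essentially the same approach as the paper: the same companion set $A$ (copying $A'[i']$ into each $V[j_k]$), the same blow-up factor $r = \ell$ for a one-step extension, and the same reduction to the one-step case. The only cosmetic difference is that you argue contrapositively (agreement on $A$ implies agreement on $A'$) where the paper argues directly (disagreement on $A'$ implies disagreement on $A$), which is logically equivalent and, if anything, a touch cleaner when handling the case of indices in $J$.
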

\begin{proof}
It is immediate from Lemma~\ref{lem:vcsubfamily}\ref{itm:vcsubfamily} that $\pi_{\X}(t) \leq \pi_{\X'}(t)$. Towards the other direction, assume, without loss of generality, that $\X'$ is a one-step Boolean extension of $\X$. Let $A' \subseteq V'[I']$ be a set of $t$ points. Set $A \subseteq V[I]$ to be the set of points such that $A[I \setminus J] \heq A'[I \setminus J]$ and $A[j_k] \heq A'[j_k] \cup A'[i']$ for every $k \in [\ell]$. Note that $|A| \leq \ell t$ and, hence, to prove the lemma it suffices to show $|\X' \cap A'| \leq |\X \cap A|$.

Suppose $a, b \in Y$ are such that $X'_a \cap A' \neq  X'_b \cap A'$. Then there exists $i_0 \in I'$ such that $X'_a \cap A'[i_0] \neq X'_b \cap A'[i_0]$.
If $i_0 \in I \setminus J$, then clearly $X_a \cap A[i_0] \neq X_b \cap A[i_0]$ completing the proof.

Suppose now that $i_0 \in J$. Note that
$$X_a \cap A[i_0] \heq (X'_a \cap A'[i_0]) \cup \left(X'_a \cap (A'[i'] \setminus A'[i_0])\right)$$
and
$$X_b \cap A[i_0] \heq (X'_b \cap A'[i_0]) \cup \left(X'_b \cap (A'[i'] \setminus A'[i_0])\right).$$
Recalling that $X'_a \cap A'[i_0] \neq X'_b \cap A'[i_0]$, we obtain $X_a \cap A[i_0] \neq X_b \cap A[i_0]$ which completes the proof. Finally, if $i_0 = i'$, then a similar argument suffices. 
\end{proof}

 %%%%%%%%%%
 
\subsection{Forced shatter function}
\label{sec:indexedshatterfunction}

Let $\X$ be a family in the composite space $V$ indexed by $I$ parameterized by $Y$. For $J \subseteq I$ and $A \subseteq V$, we let
$$ \X \cap_J A := \{ X_b \cap A : b \in Y \text{ and }X_b[i] \cap A \neq \emptyset \text{ for each } i \in J\}. $$
The \emph{$J$-forced shatter function} of $\X$ is
$$ \tildepi_{\X, J}(t) := \max\{ |\X \cap_J A|: A \subseteq V,\, |A| = t\}. $$
Noting that $\X \cap_J A \subseteq \X \cap A$ for any $A\subseteq V$ and $J \subseteq I$, we obtain
\begin{equation}
\label{eq:jshatterupperbound}
\tildepi_{\X, J}(t) \leq \pi_{\X}(t) \text{ for every $J \subseteq I$}.
\end{equation}

The following example illustrates the behavior of the forced shatter function -- compare it to the set system $\S_3$ discussed in Section~\ref{sec:proofoverview2}. Note, in particular, that the $J$-forced shatter function is different from the shatter function of the family $\X[J]$, i.e., $\pi_{\X[J]} \neq \tildepi_{\X, J}$.
\begin{example}
Let $\X$ be the family in $\RR \sqcup \RR \sqcup \RR^3$ parametrized by $\RR^2$ defined as $$ \X = \big(  \{x\} \sqcup \{ y \} \sqcup \{ (x, y, z) \}\big)_{(x, y, z) \in \RR^3}. $$
Note first that $\pi_{\X}(t) = \Theta(t^2)$. The following are easy results.
\begin{enumerate}[label = (\alph*)]
    \item $\tildepi_{\X, \{1\}}(t) = \Theta(t^2)$, $\tildepi_{\X, \{2\}}(t) = \Theta(t^2)$, and $\tildepi_{\X, \{3\}}(t) = \Theta(t^2)$;
    \item $\tildepi_{\X, \{1,2\}}(t) = \Theta(t^2)$;
    \item $\tildepi_{\X, \{1,3\}}(t) = \Theta(t)$ and $\tildepi_{\X, \{2,3\}}(t) = \Theta(t)$;
    \item $\tildepi_{\X, \{1, 2, 3\}}(t) = \Theta(t)$. 
\end{enumerate}
\end{example}

The following lemma bounds the shatter function in terms of the forced shatter functions. It is key to the subsequent analysis.
\begin{lem}
\label{lem:indexeddensity}
Let $\X$ be a family in the composite space $V$ indexed by $I$. Then $$\pi_{\X}(t) = \Theta\left(\max_{J \subseteq I} \{ \tildepi_{\X, J}(t) \} \right).$$
\end{lem}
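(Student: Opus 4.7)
The plan is to prove the two inequalities implicit in the $\Theta$ separately. The lower bound $\max_{J \subseteq I} \tildepi_{\X, J}(t) \leq \pi_\X(t)$ is immediate from~\eqref{eq:jshatterupperbound}, since the maximum of a family of quantities dominated by $\pi_\X(t)$ is dominated by $\pi_\X(t)$. So the substantive content is the matching upper bound
\[ \pi_\X(t) \leq 2^{|I|} \max_{J \subseteq I} \tildepi_{\X, J}(t). \]

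To prove this, I would fix $A \subseteq V$ with $|A| = t$ achieving $\pi_\X(t) = |\X \cap A|$, and classify each trace $X_b \cap A$ according to its \emph{support}
\[ J(b) := \{ i \in I : X_b[i] \cap A \neq \emptyset \} \subseteq I. \]
Write $\mathcal{F}_J := \{ X_b \cap A : b \in Y,\, J(b) = J \}$, so that $\X \cap A = \bigcup_{J \subseteq I} \mathcal{F}_J$ and hence
\[ |\X \cap A| \leq \sum_{J \subseteq I} |\mathcal{F}_J|. \]
The key observation is that for $b$ with $J(b) = J$, the components $X_b[i] \cap A$ with $i \notin J$ are empty, so $X_b \cap A = X_b \cap A[J]$, and moreover $X_b[i] \cap A[J] = X_b[i] \cap A[i] \neq \emptyset$ for every $i \in J$. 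Consequently, the map $X_b \cap A \mapsto X_b \cap A[J]$ injects $\mathcal{F}_J$ into $\X \cap_J A[J]$, and therefore
\[ |\mathcal{F}_J| \leq |\X \cap_J A[J]| \leq \tildepi_{\X, J}(|A[J]|) \leq \tildepi_{\X, J}(t), \]
where the final inequality uses $|A[J]| \leq |A| = t$ together with the fact that $\tildepi_{\X, J}$ is nondecreasing in its argument (a set of size $|A[J]|$ can always be padded inside $V$ to a set of size $t$ without decreasing $|\X \cap_J \cdot|$).

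Summing over $J \subseteq I$ then gives $|\X \cap A| \leq 2^{|I|} \max_{J \subseteq I} \tildepi_{\X, J}(t)$, completing the proof. I do not expect any real obstacle: the monotonicity of $\tildepi_{\X,J}$ is the only point that requires a sentence of justification, and the rest is a clean partition argument. The constant $2^{|I|}$ depends only on $|I|$, which is fixed throughout, so absorbing it into the $\Theta$ is harmless.
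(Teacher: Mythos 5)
Your proof is correct and follows essentially the same argument as the paper: partition the traces $X_b \cap A$ by their support $J$ and bound each class by the $J$-forced shatter function $\tildepi_{\X,J}(t)$. The only difference is your detour through $A[J]$ and the monotonicity of $\tildepi_{\X,J}$, which the paper avoids by noting that each class is already contained in $\X \cap_J A$ with $|A| = t$, so no monotonicity step is needed.
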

\begin{proof}
By \eqref{eq:jshatterupperbound}, we obtain $\max_{J \subseteq I} \{ \tildepi_{\X, J}(t) \} \leq \pi_{\X}(t)$.

To see tahe other direction, fix $A \subseteq V$ and consider the partition $\{ P_J : J \subseteq I\}$
of $\X \cap A$  where
$$ P_J = \{ X_b \cap A : b \in Y \text{ and }X_b[i] \cap A \neq \emptyset \text{ if and only if } i \in J\}. $$
By definition, we have $P_J \subseteq \X \cap_J A$, so $|P_J| \leq \tildepi_{\X, J}(t)$ and, hence, $|\X \cap A| \leq \sum_{J \subseteq I} \tildepi_{\X, J}(t)$. 
Since this is true for any choice of $A$, we obtain $\pi_\X(t) \leq \sum_{J \subseteq I} \tildepi_{\X, J}(t)$, completing the proof.
\end{proof}

\section{Simple Families}
\label{sec:simple}

Throughout this section, let $\cR= (R; <, 0, 1, +, (\lambda\cdot)_{\lambda \in D})$ be a vector spaces over an ordered division ring $D$. We view $\cR$ as a structure in the language $L = \{<, 0, 1, +, (\lambda\cdot)_{\lambda \in D}\}$. As usual, we will write $x \leq y$ for $(x<y) \vee (x=y)$. See Appendix~\ref{app:modelprelim} for standard model theoretic definitions and facts about $\cR$, including the definition of linear polynomials and semilinear sets in $\cR$. We highlight the fact that $\cR$ is a weakly locally modular o-minimal structure.

This general setting is necessary for applying the relevant model-theoretic results. Specifically, it is required to prove Theorem~\ref{thm:mainversion2} though the theorem is stated only for $R =\RR$. We need less to prove Theorem~\ref{thm:main}, where only $D =\RR$ required. Furthermore, only Proposition~\ref{prop:uniformclosure}, \ref{prop:uniformapproximation}, and \ref{prop:uniformization} rely on compactness from model theory, and these are very believable for $D =\RR$. Readers solely interested only in Theorem~\ref{thm:main} may treat these propositions as black boxes and assume $R=D=\RR$ for simplicity.

Suppose $\X = (X_b)_{b \in Y}$ is a family in $R^m$ parametrized by $Y \subseteq R^n$. We say that $\X$ is semilinear (in $\cR$) if the sets $Y$ and $Z = \{ (a,b) \in R^m \times R^n : a \in X_b,\,b \in Y \}$ are semilinear, so the set system $\S(\X) =  \{ X_b: b\in Y \} $ is semilinear. A family  $\X = (X_b)_{b \in Y}$ in a composite space $V$ indexed by $I$ is \emph{semilinear} if the composite space $V$ is a disjoint union of copies of Euclidean spaces and, for each $i \in I$, $\X[i]$ is a  semilinear family in $V[i]$.  

For the rest of this section, let $V$ be a composite space indexed by $I$ such that, for each $i \in I$, $V[i] \heq R^{m_i}$. Let
Let $\X = (X_b)_{b \in Y}$ be a family parameterized by $R^n$ in the composite space $V$. We will focus our attention to a restricted class of semilinear families defined below.

\begin{defn}[Simple families]
\label{def:simple}
We say $\X$ is a \emph{simple family} if, for every $i \in I$, there exist linear polynomials (with coefficients in $D$) in $n$ variables $f_{i,1}, \dots, f_{i,m_i}$ such that one of the following holds:
\begin{enumerate}[label = (\alph*)]
    \item $\X[i] =  \Big( \big\{ ( f_{i,1}(b), \dots, f_{i,m_i}(b) ) \big\} \subseteq R^{m_i}\Big)_{b \in R^n}$ -- we refer to $\X[i]$ as a \emph{family of points};
    \item 
     $\X[i] = \Big( \big\{ (f_{i,1}(b), \dots, f_{i,m_i-1}(b), t): t \geq f_{i,m_i}(b) \big\} \subseteq R^{m_i} \Big)_{b \in R^n}$  -- we refer to $\X[i]$ as a  \emph{family of half-lines}.
\end{enumerate}
\end{defn}

\subsection{Forced shatter function and density}

\begin{defn} (Critical parameter)
\label{def:critical} Suppose $\X$ is a simple family as in Definition~\ref{def:simple}.
Let $J, J'$ be such that $J' \subseteq J \subseteq I$ and let $A \subseteq V$. We say a parameter $b \in R^n$ is \emph{critical} with respect to $(A, J, J'
)$ if the following hold:
\begin{enumerate}[label = (\alph*)]
\item\label{itm:criticalJ} $X_b[i] \cap A \neq \emptyset$ for every $i \in J$;
\item \label{itm:criticalJ'} $\big(f_{i,1}(b), \dots, f_{i,m_i}(b)\big) \in A$ if and only if $i\in J'$. 
\end{enumerate}
\end{defn}

Let $J, J'$ be such that $J' \subseteq J \subseteq I$. We define the \emph{grid} $U_{J, J'}$ to be the Cartesian product $$ U_{J, J'} := \prod_{i \in J'} R^{m_i} \times \prod_{i \in J \setminus J'} R^{m_i - 1}. $$
By a \emph{sub-grid} we mean 
a subset $W \subseteq U_{J, J'}$ of the form $W = \prod_{i \in J} W_i$ with $W_i \subseteq R^{m_i}$ if $i \in J'$ and $W_i \subseteq R^{m_i - 1}$ if $i \in J \setminus J'$.

For $E \subseteq U_{J, J'} $, the \emph{extremal function} $\delta_E: \NN \to \NN$ is given by
$$ \delta_E(t) = \max_{W} |E \cap W| $$
where $W = \prod_{i \in J} W_i$ ranges over sub-grids of $U_{J, J'}$ with $\sum_{i \in J} |W_i|=t$. Note that, in combinatorial terms, $H= E \cap W$ is a $|J|$-uniform $|J|$-partite hypergraph and the extremal functions measures the maximum number of edges $H$ can have.

For a point $\overline{x} = (x_i : i \in J) \in U_{J, J'}$ and $\alpha \in R$, we denote by $P(\overline{x}, \alpha) \subseteq V[J]$ the point set
$$ \bigsqcup_{i \in J'} \{ x_i \} \sqcup \bigsqcup_{i \in J \setminus J'} \{ (x_i, \alpha) \}.$$

\begin{defn}
\label{def:criticalgraph}
The set $E_{J, J'} \subseteq U_{J, J'}$ consists of points $\overline{x} \in U_{J, J'}$ such that there is $\alpha \in R$ and $b \in R^n$ such that $b$ is critical with respect to $\big(P(\overline{x}, \alpha), J, J'\big)$.
\end{defn}

\begin{lem}
\label{lem:criticalrelation}
Let $J, J'$ be such that $J' \subseteq J \subseteq I$. Then $E_{J, J'}$ is semilinear.
\end{lem}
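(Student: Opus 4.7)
The plan is to exhibit a first-order $L$-formula $\varphi(\overline{x})$ whose solution set in $\cR$ is exactly $E_{J, J'}$, and then invoke quantifier elimination for $\cR$ (Appendix~\ref{app:modelprelim}) to conclude that $E_{J, J'}$ is semilinear. Since membership in both a family of points and a family of half-lines is cut out by a finite conjunction of linear equalities and weak inequalities in the values $f_{i,j}(b)$, the defining formula for $\overline{x} \in E_{J, J'}$ naturally takes the shape $\exists \alpha \in R\, \exists b \in R^n\, \psi(\overline{x}, \alpha, b)$ with $\psi$ quantifier-free and piecewise linear.

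The substantive content is the unpacking of Definition~\ref{def:critical} applied to $A = P(\overline{x}, \alpha)$, organised into four subcases according to whether $i \in J'$ or $i \in J \setminus J'$ and whether $\X[i]$ is a family of points or of half-lines. For $i \in J'$, both subcases collapse to the single linear system $x_i = (f_{i,1}(b), \dots, f_{i,m_i}(b))$: in the points case condition~\ref{itm:criticalJ} gives this directly, while in the half-lines case \ref{itm:criticalJ} yields the first $m_i - 1$ equalities together with $(x_i)_{m_i} \geq f_{i,m_i}(b)$, and then condition~\ref{itm:criticalJ'} forces the endpoint to coincide with the unique element of $A[i] = \{x_i\}$, collapsing the inequality to equality. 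For $i \in J \setminus J'$ with $\X[i]$ a family of half-lines, condition~\ref{itm:criticalJ} yields $x_i = (f_{i,1}(b), \dots, f_{i,m_i-1}(b))$ together with $\alpha \geq f_{i,m_i}(b)$, while \ref{itm:criticalJ'} forbids the endpoint from lying in $P(\overline{x}, \alpha)[i] = \{(x_i, \alpha)\}$, strengthening the last clause to the strict inequality $\alpha > f_{i,m_i}(b)$. Finally, for $i \in J \setminus J'$ with $\X[i]$ a family of points, conditions~\ref{itm:criticalJ} and~\ref{itm:criticalJ'} are mutually incompatible, so this subcase contributes $\bot$ and, if any such $i$ exists, $E_{J, J'} = \emptyset$ is trivially semilinear.

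Conjoining the resulting atomic conditions over $i \in J$ and prepending the existential quantifiers on $\alpha$ and $b$ produces the desired formula $\varphi(\overline{x})$, whose solution set is then semilinear by quantifier elimination. There is no real conceptual obstacle; the only delicate point is verifying, in each subcase, that clauses \ref{itm:criticalJ} and \ref{itm:criticalJ'} combine to give exactly the stated linear system — specifically that for $i \in J'$ the endpoint condition is strictly stronger than the intersection condition, while for $i \in J \setminus J'$ with half-lines the negated endpoint condition converts the weak inequality $\alpha \geq f_{i,m_i}(b)$ into the strict one $\alpha > f_{i,m_i}(b)$.
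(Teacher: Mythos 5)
Your proposal is correct and follows the same route as the paper: the paper simply observes that $E_{J,J'}$ is definable in $\cR$ and invokes quantifier elimination, while you spell out the defining formula $\exists \alpha\,\exists b\,\psi(\overline{x},\alpha,b)$ and the case analysis behind Definition~\ref{def:critical} explicitly. Your unpacking of the four subcases (including the collapse to equality for $i \in J'$, the strict inequality for half-lines with $i \in J\setminus J'$, and the vacuous points case) is accurate, so the extra detail is a harmless elaboration of the paper's ``it is clear'' step.
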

\begin{proof}
It is clear that $E_{J, J'}$ is definable in $\cR$. The assertion follows from quantifier elimination for the structure $\cR$ (see \cite{vandries1998}*{Corollary 7.8}).
\end{proof}

\begin{lem}
\label{lem:cubevc1}
Let $J, J'$ be such that $J' \subseteq J \subseteq I$. Then $\delta_{E_{J, J'}}(t) = O(\tildepi_{\X, J}(t))$.
\end{lem}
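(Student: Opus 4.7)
The plan is to establish the stronger inequality $\delta_{E_{J,J'}}(t) \leq \tildepi_{\X,J}(t)$ by producing, from any sub-grid $W = \prod_{i \in J} W_i \subseteq U_{J,J'}$ with $\sum_{i \in J} |W_i| = t$, a point set $A \subseteq V$ of size $t$ and an injection $\overline{x} \mapsto X_{b_{\overline{x}}} \cap A$ from $E_{J,J'} \cap W$ into $\X \cap_J A$. A useful preliminary observation restricts the setting: if some $i \in J \setminus J'$ has $\X[i]$ a family of points, then Definition~\ref{def:critical}\ref{itm:criticalJ} forces the unique point of $X_b[i]$ to equal $(x_i, \alpha) \in P(\overline{x},\alpha)$, contradicting Definition~\ref{def:critical}\ref{itm:criticalJ'}; hence $E_{J,J'} = \emptyset$ and the inequality is trivial. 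So I may assume each $\X[i]$ with $i \in J \setminus J'$ is a family of half-lines.

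For each $\overline{x} \in E_{J,J'} \cap W$, fix witnesses $\alpha_{\overline{x}} \in R$ and $b_{\overline{x}} \in R^n$ from Definition~\ref{def:criticalgraph}. Since $E_{J,J'} \cap W$ is finite and the underlying ordered vector space is unbounded, I can choose a single $\alpha_0 \in R$ with $\alpha_0 > f_{i,m_i}(b_{\overline{x}})$ for every such $\overline{x}$ and every $i \in J \setminus J'$. Define
\[ A[i] = W_i \text{ for } i \in J', \quad A[i] = W_i \times \{\alpha_0\} \text{ for } i \in J \setminus J', \quad A[i] = \emptyset \text{ for } i \notin J, \]
so that $|A| = \sum_{i \in J}|W_i| = t$. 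A direct check using $\alpha_0 > f_{i,m_i}(b_{\overline{x}})$ shows $b_{\overline{x}}$ is still critical with respect to $(P(\overline{x}, \alpha_0), J, J')$; in particular $X_{b_{\overline{x}}}[i] \cap A \neq \emptyset$ for every $i \in J$, so $X_{b_{\overline{x}}} \cap A \in \X \cap_J A$.

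The main remaining step is injectivity. Take $\overline{x} \neq \overline{x}'$ in $E_{J,J'} \cap W$ differing at some coordinate $i \in J$. If $i \in J \setminus J'$, then $X_{b_{\overline{x}}}[i]$ is a half-line with first $m_i-1$ coordinates equal to $x_i$ and threshold below $\alpha_0$, so $X_{b_{\overline{x}}}[i] \cap A[i] = \{(x_i, \alpha_0)\}$, which recovers $x_i$. If $i \in J'$ and $\X[i]$ is a family of points, then $X_{b_{\overline{x}}}[i] = \{x_i\}$ and the same conclusion holds. The delicate case, and the main obstacle, is $i \in J'$ with $\X[i]$ a family of half-lines: here both $X_{b_{\overline{x}}}[i]$ and $X_{b_{\overline{x}'}}[i]$ are half-lines whose respective endpoints are $x_i, x_i'$ (by Definition~\ref{def:critical}\ref{itm:criticalJ'} with $i \in J'$), and both endpoints lie in $A[i] = W_i$. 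If the first $m_i-1$ coordinates of $x_i, x_i'$ disagree, the two half-lines are disjoint, so their traces on $A[i]$ are disjoint nonempty sets; if those coordinates agree but the last coordinates differ, the endpoint with the smaller last coordinate lies in its own half-line but falls strictly below the other half-line's threshold, distinguishing the two traces. This is exactly the configuration for which condition \ref{itm:criticalJ'} (``endpoint in $A$'') was built into the definition of critical, and once it is handled the injection is complete, yielding $|E_{J,J'} \cap W| \leq |\X \cap_J A| \leq \tildepi_{\X,J}(t)$.
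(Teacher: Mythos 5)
Your proof is correct and follows the same strategy as the paper's: from each sub-grid $W = \prod_{i\in J} W_i$ with $\sum_{i\in J}|W_i| = t$, build a size-$t$ point set $A$ and inject $E_{J,J'}\cap W$ into $\X \cap_J A$. Two stylistic differences: you explicitly dismiss the degenerate case where some $i \in J\setminus J'$ indexes a family of points (which forces $E_{J,J'}=\emptyset$), whereas the paper leaves this implicit; and you pick a single global $\alpha_0$ where the paper uses the coordinate-wise maximum $\beta(w_i) = \max\{\alpha(\overline{x}) : \overline{x}\in H,\ x_i = w_i\}$ --- both work, and yours is cleaner, needing only that $R$ is unbounded and $E_{J,J'}\cap W$ is finite. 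The more substantive difference is in the injectivity step for $i \in J'$ when $\X[i]$ is a family of half-lines. The paper writes $X_b[i]\cap A = x_i$, which need not hold literally: the half-line emanating from the endpoint $x_i$ may meet further points of $W_i$. Your argument instead recovers $x_i$ as the distinguished endpoint of the trace, separating $x_i$ from $x_i'$ either by a mismatch in the first $m_i-1$ coordinates (giving disjoint half-lines) or, when those agree, by the endpoint with the smaller last coordinate falling strictly below the competing threshold. This closes the case carefully and makes explicit why condition (b) of Definition~\ref{def:critical} is built into the notion of critical parameter.
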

\begin{proof}
Let $W = \prod_{i \in J} W_i \subseteq U_{J, J'}$ be a sub-grid such that $\sum_{i \in J} |W_i| = t$. Consider the hypergraph $H := E_{J, J'} \cap W$. By definition, we have $|H| \leq \delta_{E_{J,J'}}(t)$. Below, we construct a set of $t$ points $A$ such that $|H| \leq |\X \cap_J A|$. Since, by definition, $|\X \cap_J A| \leq \tildepi_{\X, J}(t)$ and $W$ is an arbitrary sub-grid, this implies the assertion of the lemma.

For each $\bar{x} =  (x_i)_{i \in J}$ in $ H$, let $\alpha(\bar{x})$ be such that some $b \in R^n$ is critical with respect to $(P(\bar{x}, \alpha(\bar{x})), J, J')$. Let $i \in J \setminus J'$ and $w_i \in W_i$. We set
$$\beta(w_i) = \max\{ \alpha(\bar{x}) : \bar{x} \in H \text{ and } x_i =w_i \}.$$

Set $A = \bigsqcup_{i \in J} A[i] \subseteq V[J]$ to be the set of points defined as follows:
\begin{enumerate}[label = (\alph*)]
\item if $i \in J'$, then $A[i] = W_i$;
\item if $i \in J \setminus J'$, then $A[i] = \{ (w_i, \beta(w_i)) \in R^{m_i}: w_i \in W_i\}$.
\end{enumerate}
Note that $|A[i]| = |W_i|$, so $|A| = t$. 

Let $\overline{x} = (x_i: i \in J)$ be an edge of $H$.
If $i \in J'$, then we have $X_b[i] \cap A = x_i$. 
If $i \in J \setminus J'$, then $\beta(x_i) \geq \alpha(\bar{x})$ by definition. Recalling that, for such $i$, $X_b[i]$ is a half-line increasing in the last coordinate, we have $X_b[i] \cap A \neq \emptyset$ and, in particular, the projection of $X_b[i] \cap A$ onto the first $m_i - 1$ coordinates equals $x_i$. Hence, $X_b \cap A$ has the form $ \{ x_i : i \in J' \} \cup \{ (x_i, \beta_i) : i \in J \setminus J'\}$ for some $\beta_i \in R$. Specifically, we obtain $X_b \cap A \in \X \cap_J A$.

Suppose $\overline{x}'$ is an edge of $H$ such that $ \overline{x}' \neq \overline{x}$. Choosing a corresponding critical parameter $b' \in R^n$ for $(P(\overline{x}', \alpha), J, J')$, by the form of the intersection from the preceding paragraph, we have $X_{b'} \cap A \neq X_b \cap A$. Thus, $|H| \leq |\X \cap_J A|$, completing the proof.
\end{proof}

For a point $a = (a_1, \dots, a_m) \in R^m$, let $\hat{a} = (a_1, \dots, a_{m-1}) \in R^{m-1}$ be the projection of $a$ onto the first $m-1$ coordinates. For a set $A \subseteq R^m$, let $\hat{A} := \{\hat{a} : a \in A\} \subseteq R^{m-1}$ denote the projected point set.

\begin{lem}
\label{lem:cubevc2}
Let $J \subseteq I$. Then
$$ \tildepi_{\X, J}(t) = O\left(\max_{J' \subseteq J} \delta_{E_{J, J'}}(t)\right).$$
\end{lem}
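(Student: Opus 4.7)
The plan is, for any finite $A \subseteq V$ with $|A| = t$, to partition $\X \cap_J A$ according to a canonical $J' \subseteq J$ and inject each piece into $E_{J,J'}$ restricted to a sub-grid of total size $\le t$. For $S \in \X \cap_J A$, I would select a canonical parameter $b(S) \in R^n$ with $X_{b(S)} \cap A = S$ and $X_{b(S)}[i] \cap A \ne \emptyset$ for every $i \in J$, chosen to maximize $|J'(b)| := |\{i \in J : (f_{i,1}(b),\ldots,f_{i,m_i}(b)) \in A\}|$. Setting $J'(S) := J'(b(S))$ yields a partition $\X \cap_J A = \bigsqcup_{J' \subseteq J} \Sigma_{J'}$ with $\Sigma_{J'} = \{S : J'(S) = J'\}$, so it suffices to bound $|\Sigma_{J'}| = O(\delta_{E_{J,J'}}(t))$ for each $J'$.

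Next, define $\phi : \Sigma_{J'} \to U_{J,J'}$ by $\phi(S) = (x_i)_{i \in J}$, where $x_i = (f_{i,1}(b(S)),\ldots,f_{i,m_i}(b(S))) \in A[i]$ when $i \in J'$ and $x_i = (f_{i,1}(b(S)),\ldots,f_{i,m_i-1}(b(S))) \in \hat{A[i]}$ when $i \in J \setminus J'$. Then $\phi(S)$ lies in the sub-grid $W := \prod_{i \in J'} A[i] \times \prod_{i \in J \setminus J'} \hat{A[i]}$ of $U_{J,J'}$, whose parts satisfy $\sum_{i \in J} |W_i| \le |A| = t$. Choosing $\alpha \in R$ strictly greater than every last coordinate of any point in $A[i]$ for $i \in J \setminus J'$, the parameter $b(S)$ is critical with respect to $(P(\phi(S),\alpha), J, J')$ in the sense of Definition~\ref{def:critical}: clause (a) holds because $x_i \in X_{b(S)}[i]$ for $i \in J'$ and $(x_i,\alpha) \in X_{b(S)}[i]$ for $i \in J \setminus J'$, while clause (b) follows from the definition of $J'(S)$ together with the strict inequality $f_{i,m_i}(b(S)) < \alpha$. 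Thus $\phi(S) \in E_{J,J'} \cap W$ and $|\phi(\Sigma_{J'})| \le \delta_{E_{J,J'}}(t)$.

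The crux is showing $\phi$ is injective on $\Sigma_{J'}$. If $\phi(S) = \phi(S') = \bar{x}$, both $b(S), b(S')$ lie in the affine subspace $\Lambda(\bar{x}) \subseteq R^n$ obtained by fixing the base points and prefixes encoded in $\bar{x}$; the intersections $S[i], S'[i]$ agree for $i \in J'$, so any discrepancy $S \ne S'$ must come from distinct thresholds $f_{i,m_i}(b(S)) \ne f_{i,m_i}(b(S'))$ at some half-line index $i \in J \setminus J'$. I would argue that the threshold map $\bar\tau := (f_{j,m_j})_{j \in J \setminus J'} : \Lambda(\bar{x}) \to R^{J \setminus J'}$ is forced to be constant by the maximality of $|J'(b(S))|$: otherwise, within the convex region of parameters in $\Lambda(\bar{x})$ still representing $S$, one could slide $b(S)$ along a suitable direction so that $f_{i,m_i}$ reaches the minimum last coordinate $z_{i,m_i}(S)$ of $S[i]$ while the remaining thresholds $(f_{j,m_j})_{j \ne i}$ stay within their ``$S$-boxes'' $(\tau_j^-(S), z_{j,m_j}(S)]$, producing a representative $b^*$ of $S$ with $J'(b^*) \supsetneq J'(b(S))$ and contradicting canonicity. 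Constancy of $\bar\tau$ then forces $S = S'$, and summing over $J' \subseteq J$ gives $|\X \cap_J A| \le 2^{|J|} \max_{J'} \delta_{E_{J,J'}}(t) = O(\max_{J' \subseteq J} \delta_{E_{J,J'}}(t))$. The principal obstacle is making the sliding argument rigorous — specifically, guaranteeing that a direction in $\Lambda(\bar{x})$ exists along which $f_{i,m_i}$ reaches $z_{i,m_i}(S)$ before any other threshold exits the $S$-box through a lower face — which requires a convex-geometric analysis of the image $\bar\tau(\Lambda(\bar{x}))$, exploiting the openness of lower faces and closedness of upper faces together with the finiteness of $A[i]$, and, in degenerate cases, a symmetric argument applied at $b(S')$.
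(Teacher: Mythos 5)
Your approach has a genuine gap: the map $\phi$ need not be finite-to-one on $\Sigma_{J'}$, so the ``injectivity via constancy of $\bar\tau$'' step fails, and the obstacle you flag at the end is in fact fatal. Here is a concrete counterexample. Take $J=\{1,2\}$, both half-line families in $R^2$ ($m_1=m_2=2$), with $n=3$ and $f_{1,1}(b)=b_1$, $f_{1,2}(b)=b_3$, $f_{2,1}(b)=b_2$, $f_{2,2}(b)=k-b_3$. Let $A[1]=\{(p,0),(p,1),\ldots,(p,k)\}$ and $A[2]=\{(q,0),\ldots,(q,k)\}$. Every $b$ hitting both parts has $b_1=p$, $b_2=q$, so $\Lambda(\bar x)$ is the line $\{(p,q,b_3):b_3\in R\}$, on which $\bar\tau(b)=(b_3,\,k-b_3)$ is non-constant; for $b_3\in(j,j+1)$ with $j\in\{0,\ldots,k-1\}$ the set $S_j := X_b\cap A$ has $S_j[1]$ with last coordinates $\{j+1,\ldots,k\}$ and $S_j[2]$ with last coordinates $\{k-j,\ldots,k\}$, so the $S_j$ are pairwise distinct. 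For none of them can one slide $b_3$ to an integer while staying in the representing region (the upper face of one box coincides with the excluded lower face of the other), so your canonical choice gives $J'(S_j)=\emptyset$ for all $j$ and $\phi(S_j)=(p,q)$ for all $j$. Thus $\phi$ is $k$-to-one on $\Sigma_\emptyset$ with $k$ unbounded, and the claimed bound $|\X\cap_J A|\le 2^{|J|}\max_{J'}\delta_{E_{J,J'}}(t)$ does not follow.

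The paper sidesteps exactly this by not insisting on a representative of $S$: it forms the \emph{closed} polytope $P(S)$ (including both the $f_{i,m_i}\le a^1_{m_i}$ and $f_{i,m_i}\ge a^0_{m_i}$ faces), takes a face $F$ of minimal dimension, defines $J'(S)$ by which of these inequalities are tight on $F$, and uses any $b^*\in F$ as the critical parameter even though $X_{b^*}\cap A$ may differ from $S$. In the example above, $P(S_j)$ is an interval whose endpoints are $b_3=j$ and $b_3=j+1$; at either vertex both constraints are tight, so $J'(S_j)=\{1,2\}$ and $f(S_j)$ records both full base points $((p,j),(q,k-j))$ or $((p,j+1),(q,k-j-1))$, making distinct $S_j$ land at distinct images except for a bounded ($\le 2^{|J'|}$) amount of collision. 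The moral is that one must record the boundary threshold values (reached in closure) rather than the achieved ones; working with an actual representative $b(S)$ cannot detect the ``shared wall'' between consecutive $S_j$ and loses exactly the information needed. If you want to keep your canonicalization by maximizing $|J'(b)|$, you would need to enlarge the ambient set over which $b$ ranges to the closure of the representing region, which essentially recovers the paper's polytope-and-face argument.
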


\begin{proof}
Let $A \subseteq V$ be a set of $t$ points. For each $J' \subseteq J$, let $W_{J'} = \prod_{i \in J} W_{J', i}$ be the finite sub-grid of $U_{J, J'}$ where
$$ W_{J', i} = \begin{cases}
    A[i] & \text{ if } i \in J';\\
    \hat{A[i]} & \text{ if } i \in J \setminus J'.\\
\end{cases}$$
Let $H_{J'} = E_{J, J'} \cap W_{J'}$. Noting that $\sum_{i \in J} |W_{J', i}| \leq  t$, we have $|H_{J'}| \leq \delta_{E_{J, J'}}(t)$.
Below, we construct a  $c$-to-$1$ function $$ f : \X \cap_J A \to \bigcup_{J' \subseteq J} H_{J'}, $$ 
where $c = 3^{|J|}$.
This implies $$|\X \cap_J A| \leq \frac{1}{c}\sum_{J'\subseteq J} |H_{J'}| \leq \frac{1}{c}\sum_{J \subseteq J'} \delta_{E_{J, J'}}(t).$$
Since the above inequality is true for an arbitrary $A \subseteq V$ with $|A| = t$, the assertion of the lemma follows. In what follows, we construct the function $f$ and show that it has the required properties.

Let $S$ range over the set $\X \cap_J A$. We note that $S[i] \neq \emptyset$ for each $i \in J$. Consider the polytope $P := P(S) \subseteq R^n$ determined by the system of equations and inequalities $Q := \bigwedge_{i \in J} Q_i$, with $Q_i$ being a system of equalities and inequalities defined as follows.
\begin{enumerate}
\item Suppose $\X[i]$ is a family of points and that $S[i] = \{a\}$. Of course, by definition, $S[i]$ consists of a single point. We set
$$ Q_i := \bigwedge_{k = 1}^{m_i} \left(f_{i,k}(y) = a_k \right),$$
where $y$ is a tuple of $n$ variables.
\item Suppose $\X[i]$ is a family of half-lines and that $S[i] = \{a^1 \prec a^2 \prec \dots \prec a^\ell \} \subseteq A[i]$, with $\prec$ being the usual ordering on the $m_i$th coordinate. By the definition of simple, the first $m_i - 1$ coordinates of the $a^j$s are necessarily the same, i.e., $\hat{a}^j = \hat{a}^k$ for every $j, k \in [\ell]$.

Suppose there is a point $a^0 \in A[i]$ such that $\hat{a}^0 = \hat{a}^1$ and $a^0 \prec a^1$. If there are multiple such points, we pick the maximal point under the ordering $\prec$. In words, $a^0$ is the first point we hit if we extend a half-line in $R^{m_i}$ containing the set $S[i]$ in the other direction. Let
$$ Q_i := \bigwedge_{k = 1}^{m_i - 1} \left(f_{i,k}(y) = a^1_k \right) \wedge \left(f_{i,m_i}(y) \leq a^1_{m_i}\right) \wedge \left(f_{i,m_i}(y) \geq a^0_{m_i}\right).$$
If a point $a^0$ as in the preceding paragraph does not exist, we simply set
$$ Q_i := \bigwedge_{k = 1}^{m_i - 1} \left(f_{i,k}(y) := a^1_k \right) \wedge \left(f_{i,m_i}(y) \leq a^1_{m_i}\right).$$
\end{enumerate}

Note first that the polytope $P$ is non-empty. Indeed, since $S \in \X \cap_J A$, there exists $b \in R^m$ such that $X_b \cap_J A = S$ which implies $b \in P$.  Note also that if $b \in R^n$ is in the interior of the polytope $P$, then $X_b \cap A = S$. When $b$ is on the boundary of $P$, we either have $X_b \cap A[i] = S[i] $ or $X_b \cap A[i] = S[i] \cup \{a^0\}$ where $a_0$ is a point as described above. The latter happens when $i \in I$, $X[i]$ is a family of half-lines, and $f_{i,m_i}(b) = a_{m_i}^0$ when $a^0 \in A[i]$ is as before. 

Let $F := F(S)$ be a face of $P$ that is of smallest dimension (if there is more than one such face, we pick one arbitrarily). Note that $F$ must be a flat.

Let $J' := J'(S) \subseteq J$ be the set of indices $i \in J$ such that $\X[i]$ is a family of points, or one of the (at most) two inequalities in $Q_i$ achieves equality on $F$. Then we have the following.
\begin{claim}
\label{cl:polytopecritical}
If $b^* \in F$, then $b^*$ is critical with respect to $(A, J, J')$.
\end{claim}
\begin{proof}
Let $b^* \in F$. By definition of the polytope $P$, we have $X_{b^*} \cap A[i] \supseteq S[i] \neq \emptyset$ for all $i \in J$, so \ref{itm:criticalJ} is satisfied. If $i \in J'$ is such that $\X[i]$ is a family of points, then clearly \ref{itm:criticalJ'} is satisfied as well. Otherwise, by the definition of $J'$ above, one of the two inequalities in $Q_i$ achieves equality, which means that $\left( f_{i, 1}(b^*),\dots, f_{i, m_i}(b^*) \right) \in A$, i.e., \ref{itm:criticalJ'} is satisfied. 
\end{proof}

Let $b^* = b^*(S) \in F$ be an arbitrary point on $F$. We set 
$$ f(S) = \prod_{i \in J'} \big(f_{i, 1}(b^*), \dots, f_{i, m_i}(b^*)\big) \times \prod_{i \in J \setminus J'} \big(f_{i,1}(b^*), \dots, f_{i, m_i - 1}(b^*) \big) \in U_{J, J'}. $$
Note that the image $f(S)$ does not depend on the choice of $b^*$ and that, by Claim~\ref{cl:polytopecritical},  $f(S)$ is an edge of $E_{J, J'}$.

It remains to show that at most $c=3^{|J|}$ elements of $\X \cap_J A$ have the same image under $f$. From now on we fix $S$. Let $F := F(S)$, $b^* := b^*(S) \in F$, and $S^* := X_{b^*} \cap A$. Recall that $S^*$ might  be different from $S$. In particular, for each $i \in I$, either $S^*[i] = S[i]$ or $S^*[i] = S[i] \cup \{a^0\}$ with $a^0 \in A[i]$ is as described above.

Now let $S' \in \X \cap_J A$ be such that $f(S') =f(S)$. 
Since $f(S) = f(S')$, we have $J'(S) = J'(S') := J'$.
Now for every in $i \in J'$ and $k \in [m_i]$, we have $f_{i, k}(b^*) = f_{i, k}(b^*(S'))$. This implies that $F(S') = F$. In particular, we may assume that $b^*(S') = b^*$.

Let $b, b' \in R^n$ be such that $S = X_{b} \cap A$ and $S' = X_{b'} \cap A$. 
Note that, if $i \in J$ is such that $\X[i]$ is a family of points, then $S[i] = S'[i] = S^*[i]$. Indeed, by the definition of $Q_i$, for each $k \in \{1, \dots, m_i\}$, we have $f_{i, k}(b) = f_{i,k}(b') = f_{i,k}(b^*)$. Similarly, for every $i \in J$ such that $\X[i]$ is a family of  half-lines, for each $1 \leq k \leq m_i - 1$, we have $f_{i, k}(b) = f_{i,k}(b')= f_{i,k}(b^*)$. On the other hand, it need not be the case that $f_{i, m_i}(b) = f_{i, m_i}(b)$.

Suppose $S \neq S'$. Then there exists an index $i \in J$ such that $S[i] \neq S'[i]$. By the discussion in the previous paragraph, $\X[i]$ is a family of half-lines. We claim that $i \in J'$. Recall that $S'[i]$ and $S[i]$ is either $S^*[i]$ or $S^*[i]\setminus \{a^0\}$ for an $a^0 \in A[i]$ as described before.  
We consider the case where  $S^*[i] = S[i] \cup \{a^0\}$ and $S^*[i] = S'[i]$, with the other case being treated similarly. In the case under consideration, we have $f_{i, m_i}(b) > f_{i, m_i}(b^*) =  f_{i, m_i}(b')$.
From the definition of $Q_i(S)$, this implies that one of the inequalities in $Q_i(S)$ is satisfied with equality on $b^*$. In fact, this must be the inequality in $Q_i(S)$ of the form $\left(f_{i,m_i}(b) \geq a^0_{m_i}\right)$. It follows that $i \in J'$ as claimed.

We have shown that $S[i]=S'[i]$ for all $i \in J\setminus J'$. For $i \in J'$, either $S'[i] =S^*[i]$ or $S'[i] =S^*[i]\setminus \{a^0\}$. As a consequence, at most $2^{|J'|} \leq 2^{|J|}$ sets $S' \in \X \cap_J A$ satisfy $f(S') = f(S)$, completing the proof.
\end{proof}

\subsection{Extremal function for semilinear hypergraphs}
\label{sec:extremalfunction}

As mentioned in Section~\ref{sec:proofoverview}, our proof is complicated by the fact that there exist relations $E$ such that $\delta_E(t)$ is not asymptotic to a polynomial. Consider the following example.
\begin{prop}
\label{eg:rational}
With 
$$ E := \{ (x_1, x_2), (y_1, y_2),(z_1, z_2) \in \RR^2 \times \RR^2 \times \RR^2: x_2 = y_1 \text{ and } y_2 = z_1 \text{ and } z_2 = x_1\}, $$
we have $\delta_E(t) = \Omega(t^{3/2})$ and $\delta_E(t) = O(t^{7/4})$.
\end{prop}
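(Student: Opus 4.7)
The plan is to prove the two bounds separately. For the lower bound, I would construct an explicit sub-grid witnessing $\Omega(t^{3/2})$ edges. For the upper bound, I would reparametrize edges of $E$ by the triple $(a,b,c)$ of ``middle'' values forced by the three defining equations, reducing the count to triangles in a tripartite graph, and then apply a short Cauchy--Schwarz estimate.

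For the lower bound, I would set $s := \lfloor \sqrt{t/3} \rfloor$, take $A = B = C := \{1, 2, \dots, s\} \subseteq \RR$, and put $W_1 := A \times B$, $W_2 := B \times C$, $W_3 := C \times A$, each of cardinality $s^2$, so that $\sum_i |W_i| = 3s^2 \leq t$. For any $(a, b, c) \in A \times B \times C$ the triple $((a,b), (b,c), (c,a)) \in W_1 \times W_2 \times W_3$ automatically satisfies $x_2 = y_1 = b$, $y_2 = z_1 = c$, and $z_2 = x_1 = a$, hence lies in $E$. This produces $s^3 = \Omega(t^{3/2})$ edges in the sub-grid, yielding $\delta_E(t) = \Omega(t^{3/2})$.

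For the upper bound, observe that every edge of $E \cap (W_1 \times W_2 \times W_3)$ is uniquely determined by $(a, b, c) := (x_1, x_2, y_2)$ subject to $(a,b) \in W_1$, $(b,c) \in W_2$, and $(c,a) \in W_3$, so the edge count equals $\sum_b N_b$ with $N_b := |\{(a,c) : (a,b) \in W_1,\, (b,c) \in W_2,\, (c,a) \in W_3\}|$. Writing $A_b := \{a : (a,b) \in W_1\}$ and $C_b := \{c : (b,c) \in W_2\}$, one has both $N_b \leq |A_b| \cdot |C_b|$ and $N_b \leq |W_3|$, hence $N_b \leq \sqrt{|A_b| \cdot |C_b| \cdot |W_3|}$. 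Summing in $b$ and applying Cauchy--Schwarz via $\sum_b \sqrt{|A_b| |C_b|} \leq \bigl(\sum_b |A_b|\bigr)^{1/2}\bigl(\sum_b |C_b|\bigr)^{1/2} = \sqrt{|W_1| |W_2|}$, followed by AM--GM on $|W_1| |W_2| |W_3| \leq (t/3)^3$, gives $\sum_b N_b \leq \sqrt{|W_1| |W_2| |W_3|} = O(t^{3/2})$, which in particular implies $O(t^{7/4})$.

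The main point of interest is that this sketch actually gives the tight bound $\delta_E(t) = \Theta(t^{3/2})$; the weaker $O(t^{7/4})$ stated in the proposition is presumably what falls out of a general K\H{o}v\'ari--S\'os--Tur\'an style application, for instance via the semilinear machinery of~\cite{bcstt2021}, which is enough for the proposition's purpose of exhibiting a relation whose extremal function is not asymptotic to a polynomial. The only obstacle in the approach above is bookkeeping: tracking the coordinate relabelings in the reparametrization and verifying the two extremal inequalities for $N_b$ uniformly in $b$.
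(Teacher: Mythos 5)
Your proof is correct, and the lower bound is essentially identical to the paper's (both take $W_1 = W_2 = W_3 = [k]^2$ and count $k^3$ edges in a $3k^2$-point sub-grid). For the upper bound, both you and the paper begin from the same reparametrization of edges by the triple of ``middle'' values and from the same two elementary inequalities $N_b \leq |A_b||C_b|$ and $N_b \leq |W_3|$, but the routes then diverge. The paper case-splits on whether $|X_c||Y_c|$ is above or below $t^{1/2}$, and in the ``large'' case falls back on the crude bound $|E \cap (X_c \times Y_c \times Z)| \leq t$ combined with the counting estimate $|C^+| \leq t^{3/4}$; this is where the slack factor of $t^{1/4}$ enters, producing $O(t^{7/4})$. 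You instead merge the two bounds via $\min(a,b) \leq \sqrt{ab}$ and then close with a single Cauchy--Schwarz in $b$, arriving at the clean inequality $\sum_b N_b \leq \sqrt{|W_1||W_2||W_3|}$, which with AM--GM gives $O(t^{3/2})$. That bound is sharp (it matches your lower bound exactly on the cube construction), so you have actually shown $\delta_E(t) = \Theta(t^{3/2})$, strictly stronger than the stated $O(t^{7/4})$. This does not contradict the proposition --- $O(t^{3/2})$ trivially implies $O(t^{7/4})$ --- and it does not undermine the proposition's purpose in the paper, since $t^{3/2}$ is still not of the form $t^s$ for integer $s$; but it is a genuine sharpening of the paper's argument, obtained by replacing the ad hoc case-split with a more systematic application of Cauchy--Schwarz.
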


\begin{proof}
To see the lower bound, for $k \in \NN$, consider the sub-grid
$$ W = [k]^2 \times [k]^2 \times [k]^2 \subseteq \RR^2 \times \RR^2 \times \RR^2. $$
Noting that each a point $(x_1, x_2) \in X$ is contained in an $k$ edges of $E \cap W$, we obtain $\delta_E(3k^2) =  k^3$, from which the lower bound follows.

For the upper bound, let $X, Y, Z \subseteq \RR^2$ and consider the grid $W = X \times Y \times Z$. Suppose that $|E \cap W| = \omega(t^{3/2})$. By adding ``dummy'' points to $X, Y, Z$ if needed, we assume that $|X| = |Y| = |Z| = t$.  For $c \in \RR$, let $X_c \subseteq X$ (resp., $Y_c \subseteq Y$) be the set of points lying on the line $x_1 = c$ (resp., $y_2 = c$). Set $C = \{c \in \RR: |X_c| > 0, |Y_c| > 0\}$.

Since the pair $(x_1, x_2), (y_1, y_2) \in X \times  Y$ is contained in an edge of $E \cap W$ only if $x_1 = y_2$, 
we have $ |E \cap W| = \sum_{c} |E \cap (X_c \times Y_c \times Z)|. $
Moreover, since a pair $(c, x_2), (y_1, c) \in X_c \times Y_c$ is contained in an edge only if $(x_2, y_1) \in Z$, we also have 
$|E \cap (X_c \times Y_c \times Z)| \leq \min \{ |X_c| |Y_c|, |Z| \}.$

Let $\C^{-} = \{c \in C: |X_c||Y_c| < t^{1/2} \}$. Now $|E \cap (X_c \times Y_c \times Z)| \leq |X_c||Y_c| < t^{1/2}$ and, trivially, $|C^{-}| \leq t$. Hence, the number of edges contributed by $c \in C^{-}$ is $O(t^{3/2})$. Set $C^{+} = C \setminus C^{-}$.

For each $c \in C^{+}$, we have $|X_c| |Y_c| \geq t^{1/2}$ and, hence, $\sum_{c \in C^{+}} \sqrt{|X_c||Y_c|} \geq |C^+| t^{1/4}$. On the other hand, by the Cauchy-Schwarz inequality,
$$ \sum_{c \in C} \sqrt{|X_c||Y_c|} \leq \sqrt{\sum_{c \in C} |X_c| \sum_{c \in C} |Y_c|} = t, $$
implying that $|C^+| \leq t^{3/4}$. Now, we certainly have $|E \cap (X_c \times Y_c \times Z)| \leq t$. Hence, the number of edges contributed by $c \in C^{+}$ is $O(t^{7/4})$, completing the proof.
\end{proof}

Despite the above example, we are able to show that the maximum, over $J' \subseteq J \subseteq I$, of $\delta_{E_{J, J'}}(t)$ is asymptotically a polynomial. We require the following result of Basit, Chernikov, Starchenko, Tao, and Tran~\cite{bcstt2021}. We refer to this paper for the relevant definitions.
\begin{fact}[\cite{bcstt2021}*{Lemma 5.5}]\label{lem:dependent-relation}
Assume that $T$ is geometric and weakly locally modular, and $\mathcal{M} = (M, \ldots) \models T $ is $\aleph_1$-saturated. Assume that $E \subseteq M^{d_1} \times \ldots \times M^{d_r}$ is an $r$-ary relation defined by a formula with parameters in a finite tuple $b$, and $E$ contains no $r$-grid $A=\prod_{i \in [r]}A_i$ with each $A_i \subseteq M^{d_i}$ infinite. Then for any $(a_1, \ldots, a_r) \in E$ there exists some $i \in [r]$ so that $a_i \in \acl \left( \left\{a_j : j \in [r] \setminus \{i\} \right\}, b\right)$.
\end{fact}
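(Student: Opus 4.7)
The plan is to prove the contrapositive by constructing an infinite grid inside $E$. Suppose for contradiction that $(a_1,\ldots,a_r) \in E$ satisfies $a_i \notin \acl(\{a_j : j \neq i\}, b)$ for every $i \in [r]$. I will produce infinite sets $A_i \subseteq M^{d_i}$ with $\prod_{i \in [r]} A_i \subseteq E$, contradicting the no-grid hypothesis.

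First, for each coordinate $i$ I would extract an infinite sequence of independent realizations. Because $a_i \notin \acl(\{a_j : j \neq i\}, b)$, the type $\operatorname{tp}(a_i / \{a_j : j \neq i\}, b)$ is nonalgebraic, and $\aleph_1$-saturation of $\mathcal{M}$ delivers infinitely many realizations. Iterating with the exchange property of the geometric pregeometry yields, for each $i$, an infinite $\acl$-independent sequence $(a_i^k)_{k \in \NN}$ over $b$ together with the other $a_j$s. Next, I would invoke weak local modularity: after localizing at a parameter $c$ chosen generically over $a_1,\ldots,a_r,b$ (so that the above nonalgebraicity statements survive), the localized pregeometry is modular. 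Modularity converts $\acl$-independence into a type-equivalence statement: if $u$ and $v$ are $\acl$-independent over $bc$, then $\operatorname{tp}(uv/bc)$ is determined by $\operatorname{tp}(u/bc)$ and $\operatorname{tp}(v/bc)$. Applying this principle inductively across the $r$ coordinates then shows that, for any choice of indices $(k_1,\ldots,k_r)$, the tuple $(a_1^{k_1},\ldots,a_r^{k_r})$ realizes the same type over $b$ as $(a_1,\ldots,a_r)$ and hence lies in $E$. Setting $A_i := \{a_i^k : k \in \NN\}$ produces the desired infinite grid.

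The main obstacle is extending the pairwise ``free combination'' principle from modularity to an $r$-ary configuration, since the standard formulation of modularity pertains to only two independent tuples at a time. I expect this to require an inductive bookkeeping argument: enumerate the coordinates and, at stage $i$, extend the already-constructed partial tuple by a realization of $\operatorname{tp}(a_i / bc)$ that is $\acl$-independent over $bc$ from everything fixed so far, checking at each step that independence combined with modularity preserves the defining formula of $E$. A secondary subtlety is ensuring that the localizing parameter $c$ supplied by weak local modularity can be chosen without destroying the hypothesis $a_i \notin \acl(\{a_j : j \neq i\}, b)$; this is arranged by using $\aleph_1$-saturation to realize $c$ generically over the data before applying the modular combination argument.
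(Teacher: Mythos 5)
First, a structural remark: the paper does not prove this statement at all --- it is imported verbatim as a Fact, citing Lemma~5.5 of \cite{bcstt2021} --- so there is no internal proof to compare with; what you have written is an attempted reproof of the cited result.

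Your proposal has a genuine gap at its central step. You assert that, after localizing at a suitable parameter $c$, ``modularity converts $\acl$-independence into a type-equivalence statement: if $u$ and $v$ are $\acl$-independent over $bc$, then $\operatorname{tp}(uv/bc)$ is determined by $\operatorname{tp}(u/bc)$ and $\operatorname{tp}(v/bc)$.'' This is false in the geometric (in particular o-minimal, hence unstable) setting the Fact is designed for. Modularity of the localized $\acl$-pregeometry is a dimension identity for closed sets; it does not give stationarity or free amalgamation of types --- that principle is a hallmark of \emph{stable} one-based theories, not of weakly locally modular geometric ones. In an ordered vector space (which is weakly locally modular and is exactly the case the paper needs), two $\acl$-independent elements $u,v$ have a joint type recording, e.g., whether $u<v$, and this is not determined by the two $1$-types. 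Concretely, take $E=\{(x,y): x<y\}$ and $a_1<a_2$ independent: every realization $a_1^k$ of $\operatorname{tp}(a_1/a_2)$ satisfies $a_1^k<a_2$ and every realization $a_2^l$ of $\operatorname{tp}(a_2/a_1)$ satisfies $a_1<a_2^l$, yet $a_1^k<a_2^l$ can fail. So your recombination step --- that any coordinatewise choice $(a_1^{k_1},\dots,a_r^{k_r})$ again realizes $\operatorname{tp}(a_1\dots a_r/b)$ and hence lies in $E$ --- is unsound, and with it the construction of the infinite grid collapses. (This example does not contradict the Fact itself, since that $E$ does contain infinite grids; it only shows the step is invalid.)

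A correct argument has to use weak local modularity in its actual form, namely the independence $a_1 b \ind_{D} a_2 b$ over $D=\acl(a_1bC)\cap\acl(a_2bC)$ for a suitably generic small $C$, i.e., a dimension-theoretic statement about algebraic closures, and then choose the grid elements in a correlated way (and handle $r>2$ by induction), rather than freely amalgamating coordinate types; this is the route taken in \cite{bcstt2021}. The preliminary parts of your sketch (non-algebraicity of each $\operatorname{tp}(a_i/\{a_j: j\neq i\},b)$, extracting infinitely many realizations by $\aleph_1$-saturation, and arranging the localizing parameter generic over $a_1,\dots,a_r,b$) are fine, but they do not by themselves produce a grid inside $E$.
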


Noting that $(R; +, <)$ is geometric and weakly locally modular, and using compactness, we obtain the following.
\begin{cor}
\label{cor:zarankiewicz}
	Suppose that $E$ is a non-empty open set of an affine subspace $V$ of $R^{d_1} \times \dots \times R^{d_k}$. Assume further that there is $t>0$ such that $E$ does not contain a set of the form $A_1 \times \cdots A_k$ with $A_i \subseteq R^{d_i}$ and $|A_i|=t$. Then there is $i \in [k]$ such that the projection
 $$p_i: R^{d_1} \times \dots \times R^{d_k} \to R^{d_1} \times \dots \times R^{d_{i-1}} \times R^{d_{i+1}}\dots  \times R^{d_k} $$
 is one-to-one on $V$.
\end{cor}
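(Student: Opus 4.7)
The corollary is a Zarankiewicz-type statement: if $E$ contains no combinatorial $k$-grid of a given size, then some projection $p_i$ must be injective on $V$. I will present a direct argument using the affine structure of $V$ and the openness of $E$, and then indicate how it can alternatively be phrased via Fact~\ref{lem:dependent-relation} as the authors' remark suggests.

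\textbf{Step 1 (axis-aligned directions).} Suppose, for contradiction, that $p_i|_V$ is one-to-one for no $i \in [k]$. For each $i$, pick distinct $u_i, v_i \in V$ with $p_i(u_i) = p_i(v_i)$, and set $w_i := u_i - v_i$. Because $V$ is affine, its direction $W := V - V$ is a linear subspace containing each $w_i$. Moreover, $p_i(u_i) = p_i(v_i)$ means that $u_i$ and $v_i$ agree on every coordinate except possibly the $i$-th, so $w_i$ has its only nonzero component $c_i \in R^{d_i}$ in the $i$-th slot.

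\textbf{Step 2 (building an infinite grid via openness).} Fix any $u_0 \in E$, and consider the map $\Psi \colon R^k \to V$ defined by $\Psi(\epsilon_1, \ldots, \epsilon_k) := u_0 + \sum_i \epsilon_i w_i$; this lands in $V$ since $u_0 + W \subseteq V$. Openness of $E$ in $V$ (in the order topology) supplies $\delta > 0$ such that $\Psi(\epsilon_1, \ldots, \epsilon_k) \in E$ whenever $|\epsilon_i| < \delta$ for every $i$. The $j$-th coordinate of $\Psi(\epsilon_1, \ldots, \epsilon_k)$ equals $u_{0,j} + \epsilon_j c_j$; setting $A_i := \{u_{0,i} + \epsilon c_i : |\epsilon| < \delta\} \subseteq R^{d_i}$, we obtain $\Psi\big((-\delta, \delta)^k\big) = A_1 \times \cdots \times A_k \subseteq E$. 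Each $A_i$ is infinite, so extracting $t$-element subsets produces a $t$-grid inside $E$, contradicting the hypothesis.

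\textbf{Fact~\ref{lem:dependent-relation} route and main obstacle.} Alternatively, one may pass to an $\aleph_1$-saturated elementary extension $\cR^* \succeq \cR$, transfer the first-order condition ``no $t$-grid'' to $E^*$ and upgrade it to ``no infinite grid'' (an infinite grid contains a $t$-subgrid), apply Fact~\ref{lem:dependent-relation} to partition $E^*$ according to which coordinate of a tuple is algebraic over the others and the parameters defining $V^*, E^*$, and then use o-minimal dimension to isolate a piece of dimension $\dim V^*$ on which some $p_i$ has generically finite fibers; since the fibers of $p_i|_{V^*}$ are parallel affine subspaces of constant dimension, this forces $p_i$ to be injective on $V^*$, which then transfers back to $\cR$. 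The main delicacy here is verifying that the pointwise algebraic-closure condition of Fact~\ref{lem:dependent-relation} cuts out a definable subset of $E^*$ amenable to dimension arguments; in the present linear o-minimal setting $\acl$ coincides with $\operatorname{dcl}$ and with the $D$-affine span, so the relevant condition is indeed first-order, and the argument goes through.
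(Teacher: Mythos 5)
Your Step 1 is correct, and the overall plan — pass to the contrapositive, find direction vectors $w_i \in W := V - V$ supported in block $i$, and build a grid inside a small open box around a point of $E$ — is a genuinely different and more elementary route than the paper's (which appeals to Fact~\ref{lem:dependent-relation} plus compactness). But Step 2 as written has a gap.

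You define $\Psi(\epsilon_1,\dots,\epsilon_k) := u_0 + \sum_i \epsilon_i w_i$ with $\epsilon_i$ ranging over $R$. In the ambient setting of this section, $\cR = (R;<,0,1,+,(\lambda\cdot)_{\lambda\in D})$ is an ordered vector space over an ordered division ring $D$, not a field: scalar multiplication $\epsilon\, w$ is only defined for $\epsilon \in D$, not for $\epsilon \in R$. If you instead take $\epsilon_i \in D$, the claim ``openness of $E$ supplies $\delta>0$ such that $\Psi(\bar\epsilon)\in E$ whenever $|\epsilon_i|<\delta$'' and the claim ``each $A_i$ is infinite'' both break down when $R$ is non-Archimedean over $D$. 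Concretely, take $D=\mathbb{Q}$ and $R=\mathbb{Q}^2$ with the lexicographic order (so $(R;<)$ is dense and $\cR$ is an o-minimal ordered $\mathbb{Q}$-vector space). With $w=(1,0)\in R$ and the open interval $C = \bigl(-(0,1),(0,1)\bigr)\subseteq R$, the set $\{\lambda\in\mathbb{Q}: \lambda w \in C\}$ is $\{0\}$; the $D$-span of a single vector need not re-enter a small order-topology box infinitely often. So the $A_i$ you construct can collapse to a single point. This is not a cosmetic issue: Corollary~\ref{cor:zarankiewicz} is used inside the proof of Theorem~\ref{thm:simpleshatter}, which is needed in the full generality of an ordered $D$-vector space (to reach Theorem~\ref{thm:mainversion1general}), so you cannot silently specialize to $R=D=\RR$.

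The fix is small but changes the nature of the argument: instead of working with the $D$-span of a single vector $w_i$, work with the entire \emph{flat} $W_i := W \cap \ker(p_i)$. This is definable (an intersection of flats), and ``$p_i|_V$ is not injective'' is equivalent to $W_i \neq \{0\}$, hence to $\dim(W_i) \ge 1$. By Fact~\ref{fact:dimandlocaldim}, a flat has constant local dimension, so for any open box $C_i \subseteq R^{d_i}$ containing $(u_0)_i$ the set $A_i := \bigl((u_0)_i + W_i\bigr)\cap C_i$ has dimension $\ge 1$ and is therefore infinite. Since the $W_i$ live in pairwise disjoint coordinate blocks, the direct sum $W_1\oplus\cdots\oplus W_k$ sits inside $W$, and for a box $C = C_1\times\cdots\times C_k$ with $V\cap C \subseteq E$ one gets $A_1\times\cdots\times A_k = \bigl(u_0 + (W_1\oplus\cdots\oplus W_k)\bigr)\cap C \subseteq V\cap C \subseteq E$, an infinite grid, contradicting the hypothesis. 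With that replacement your direct argument is correct and nicely avoids the model-theoretic machinery the paper invokes. Your sketch of the Fact~\ref{lem:dependent-relation}/compactness route is consistent with what the paper gestures at, but it is a sketch; the direct argument, once patched, is the cleaner contribution.
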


We say that $E$ is \emph{independent} if there is no $i \in [k]$ such that the projection $p_i$ (as in Corollary~\ref{cor:zarankiewicz}) is one-to-one on $V$, and \emph{dependent} otherwise.

The main result of this section is the following.
\begin{thm}
\label{thm:simpleshatter}
If $\X$ is a simple family, then there is $s \in \NN$ such that $\pi_\X(t) = \Theta(t^s)$.
\end{thm}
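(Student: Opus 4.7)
The plan is to combine the reductions established earlier in this section with a structural analysis of the semilinear sets $E_{J,J'}$ via Corollary~\ref{cor:zarankiewicz}. By Lemma~\ref{lem:indexeddensity}, $\pi_\X(t) = \Theta(\max_{J \subseteq I} \tildepi_{\X,J}(t))$, and combining Lemmas~\ref{lem:cubevc1} and~\ref{lem:cubevc2} gives $\tildepi_{\X,J}(t) = \Theta(\max_{J' \subseteq J} \delta_{E_{J,J'}}(t))$. It thus suffices to prove
\[ \max_{J' \subseteq J \subseteq I} \delta_{E_{J,J'}}(t) = \Theta(t^s) \text{ for some integer } s \in \{0,1,\dots,n\}. \]

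Unpacking Definition~\ref{def:criticalgraph}, each $E_{J,J'}$ is, up to a semilinear boundary, the image of the linear map $R^n \to U_{J,J'}$ sending $b \mapsto \left((f_{i,k}(b))_k\right)_{i \in J}$, where $k$ ranges over $[m_i]$ if $i \in J'$ and over $[m_i-1]$ if $i \in J \setminus J'$. In particular $E_{J,J'}$ is contained in a single affine subspace $V_{J,J'} \subseteq U_{J,J'}$ of dimension at most $n$. Using Lemma~\ref{lem:criticalrelation} and a standard semilinear cell decomposition, I would decompose each $E_{J,J'}$ into finitely many pieces $F$, each a non-empty open subset of an affine subspace, and reduce to bounding $\delta_F(t)$ for each piece separately, since the union of finitely many relations has $\delta$ within a constant factor of the maximum.

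For a piece $F$ open in an affine subspace $V$ of arity $k = |J|$, I would apply Corollary~\ref{cor:zarankiewicz} recursively. Either $F$ is \emph{independent}, meaning no coordinate projection $p_i|_V$ is injective, in which case $F$ contains arbitrarily large $k$-grids $A_1 \times \cdots \times A_k$; taking $|A_j| = \Theta(t/k)$ gives $\delta_F(t) = \Omega(t^k)$, while the matching upper bound $\delta_F(t) \leq (t/k)^k$ follows from AM-GM, so $\delta_F(t) = \Theta(t^k)$. Otherwise some $p_i|_V$ is a bijection onto $p_i(V)$, making the $i$th coordinate an affine function of the remaining ones and giving $\delta_F(t) \leq \delta_{p_i(F)}(t)$ where $p_i(F)$ is a piece of arity $k-1$ and the same dimension; we then recurse. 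The recursion terminates at some arity $k^* \in \{1,\dots,k\}$, either when independence first arises or when $k^* = 1$, producing the upper bound $\delta_F(t) = O(t^{k^*})$.

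The main obstacle is the matching lower bound $\delta_F(t) = \Omega(t^{k^*})$ once $F$ has been reduced through several dependent steps: a $k^*$-grid $A_1 \times \cdots \times A_{k^*}$ in the final reduced subspace must be lifted back to $V$ through a chain of affine dependencies, and if lifted naively the intermediate coordinates occupy image sets of size up to $\prod|A_j|$, inflating $\sum|W_j|$ well past $t$. To control this I would take each $A_j$ to be a suitably scaled arithmetic progression in $D$, so that the image of $A_1 \times \cdots \times A_{k^*}$ under any affine function with coefficients in $D$ is itself an arithmetic progression of size $O(\max_j |A_j|)$ rather than multiplicative in the $|A_j|$. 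This keeps the total sub-grid size $O(t)$ while yielding $|F \cap W| = \Theta(t^{k^*})$. Finally, taking the maximum over all pieces $F$ and all pairs $(J, J')$ produces a single integer $s$ with $\pi_\X(t) = \Theta(t^s)$.
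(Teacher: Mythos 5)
Your reductions via Lemma~\ref{lem:indexeddensity} and Lemmas~\ref{lem:cubevc1}--\ref{lem:cubevc2}, the cell decomposition into pieces that are open subsets of affine spaces, and the treatment of \emph{independent} pieces via Corollary~\ref{cor:zarankiewicz} all match the paper. The genuine gap is in your treatment of \emph{dependent} pieces: you claim that after following a chain of injective projections down to arity $k^*$ you can recover a matching lower bound $\delta_F(t) = \Omega(t^{k^*})$ by lifting a $k^*$-grid back through the affine dependencies, using scaled arithmetic progressions so that images stay of size $O(\max_j|A_j|)$. This step is false, and it cannot be repaired, because the conclusion it is meant to establish --- that each individual piece has $\delta_F$ asymptotic to an integer power --- is itself false for semilinear relations of exactly this shape. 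The paper's Proposition~\ref{eg:rational} is the counterexample pattern: $E=\{(x,y,z): x_2=y_1,\, y_2=z_1,\, z_2=x_1\}$ is an affine subspace on which every projection $p_i$ is injective; your recursion would drop the $z$-block, find that the image $\{(x,y):x_2=y_1\}$ is independent of arity $2$, and conclude $\delta_E(t)=\Theta(t^2)$, whereas in fact $\delta_E(t)=O(t^{7/4})$. The lifting fails because the eliminated block is a \emph{vector-valued} affine function of the others (here $z=(y_2,x_1)$), whose image over a grid $A_1\times A_2$ has size $|A_1|\,|A_2|$ no matter how the $A_j$ are scaled or structured; arithmetic-progression tricks only compress scalar sumsets with commensurable steps, not product-type images, and in a chain of reductions you would need this simultaneously for many different dependencies. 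Without the lower bound, your argument leaves open that the overall maximum is attained by a dependent piece with non-polynomial $\delta$ (e.g.\ order $t^{7/4}$), so the final integrality claim does not follow.

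What the paper does instead, and what your argument is missing, is to exploit the specific origin of the relations $E_{J,J'}$ rather than trying to control each $\delta$ individually. In the Claim inside the paper's proof of Theorem~\ref{thm:simpleshatter}, one shows that if a piece $E^n_{J,J'}$ is dependent with $p_i$ injective, then deleting the $i$th coordinate sends each critical tuple for $(P(\overline{x},\alpha),J,J')$ to a critical tuple for $(\hat J,\hat J')$ with $\hat J = J\setminus\{i\}$, $\hat J'=J'\setminus\{i\}$; hence $p_i$ maps $E^n_{J,J'}$ injectively \emph{into} $E_{\hat J,\hat J'}$, giving $\delta_{E^n_{J,J'}}(t) = O\bigl(\delta_{E_{\hat J,\hat J'}}(t)\bigr)$, i.e.\ domination by another member of the same family. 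Since every $\delta_{E_{\hat J,\hat J'}}$ is in turn $O(\tildepi_{\X,\hat J}(t)) = O(\pi_\X(t))$ by Lemma~\ref{lem:cubevc1}, dependent pieces can simply be discarded from the maximum in~\eqref{eq:integrality2}: iterating the Claim shows the maximum is attained, up to constants, on independent pieces, each of which contributes exactly $\Theta(t^{|J|})$ by Corollary~\ref{cor:zarankiewicz}. No lower bound for dependent pieces is ever needed. To fix your proof, replace the lifting argument by this observation that the injective projection lands inside $E_{\hat J,\hat J'}$ (this is where the definition of criticality is used), and conclude by taking the maximum over independent pieces only.
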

\begin{proof}
By Lemmas~\ref{lem:indexeddensity},~\ref{lem:cubevc1}, and~\ref{lem:cubevc2}, we have 
\begin{equation}
\label{eq:integrality}
\pi_{\X}(t) = \Theta\left(\max_{J \subseteq I}\{\tildepi_{\X, J}(t)\}\right) = \Theta\left(\max_{J' \subseteq J \subseteq I}\{\delta_{E_{J, J'}}(t)\}\right).
\end{equation}
By quantifier elimination, for each $J' \subseteq J \subseteq I$, there is $N(J, J') \in \NN$ such that $E_{J, J'}$ is a disjoint union $\bigcup_{n \leq N(J, J')} E_{J, J'}^n$ where $E_{J, J'}^n \neq \emptyset$ consists of the elements of an affine space $F_{J, J'}^n$ satisfying a system of strict linear inequalities. Along with~\eqref{eq:integrality}, this implies
\begin{equation}
\label{eq:integrality2}
\pi_{\X}(t) = \Theta\left(\max_{
\begin{subarray}{c}  J' \subseteq J \subseteq I\\ n \leq N(J, J')\end{subarray}}
\{\delta_{E^n_{J, J'}}(t)\}\right).
\end{equation}

The assertion of the theorem follows from the following claim.

{\bf Claim:} If $E_{J, J'}^n$ is dependent, then $\delta_{E_{J, J'}^n}(t) = O (\delta_{E_{\hat{J}, \hat{J}'}^{\hat n}}(t)) $ for some proper subset $\hat{J} \subseteq J$, $\hat{J}' \subseteq J'\cap \hat{J}$, and $\hat{n} \in N(\hat{J}, \hat{J}')$.

We first show why the claim implies the theorem. By repeatedly applying the claim, we get $\delta_{E_{J, J'}^n}(t) = O (\delta_{E_{\hat{J}, \hat{J}'}}^{\hat n}(t)) $ with $|\hat{J}| < |J|$ and  $E_{\hat{J}, \hat{J}'}^{\hat n}(t)$ independent. Indeed, when $|\hat{J}|=0$, the relation $E_{\hat{J}, \hat{J}'}^{\hat n}(t)$ is vacuously independent. Thus equation~\eqref{eq:integrality2} can be further refined to 
\begin{equation}
\label{eq:integrality3}
\pi_{\X}(t) = \Theta\left(\max_{
\begin{subarray}{c}  J' \subseteq J \subseteq I\\ n \leq N(J, J')\\ E_{J, J'}^n \text{ is independent}
\end{subarray}}
\{\delta_{E^n_{J, J'}}(t)\}\right).
\end{equation}
Now, by Corollary~\ref{cor:zarankiewicz}, if $E^n_{J, J'}$ is independent, then $\delta_{E^n_{J, J'}}(t) = t^{|J|}$ and we obtain the desired conclusion.

It remains to prove the claim. We suppose $\delta_{E_{J, J'}^n}$ is dependent. Then there is $i \in I$ such that, with $\hat{J} = J \setminus\{i\}$ and $\hat{J}' = J' \setminus\{i\}$, the projection map
$$p_i: \prod_{j \in J'\setminus{i}} R^{m_i} \times \prod_{j \in J \setminus J'} R^{m_i - 1} \to \prod_{j \in  \hat{J}'} R^{m_i} \times \prod_{j \in \hat{J} \setminus \hat{J}'} R^{m_i - 1} $$ is injective.
Since $\delta_{E_{\hat{J}, \hat{J}'}}(t) = \max_{\hat{n} \leq N(\hat{J}, \hat{J}')} \delta_{E^{\hat n}_{\hat{J}, \hat{J}'}}(t)$, it suffices to show that
$$ \delta_{E^n_{J, J'}}(t) \leq \delta_{E_{\hat{J}, \hat{J}'}}(t). $$

Choose a set $A =  \bigsqcup_{j \in J} A[j]$ where $\sum_{j \in J } A[j] \subseteq  R^{m_j}$ when $j \in J'$ and $A[j] \subseteq  R^{m_j-1}$ when $j \in J'$ with $\sum_{j \in J}|A[j]| =t$ such that $$\left| E^n_{J, J'} \cap \prod_{j \in J} A[j] \right| \geq \delta_{E_{J, J'}}(t). $$
The projection   $p_i$ is injective on $E^n_{J, J'}$, so it is enough to verify that the image is a subset of $E_{\hat{J}, \hat{J'}} \cap \prod_{j \in \hat{J}} A[j]$. Consider $(x_j)_{j \in J}$ in $E^n_{J, J'} \cap \prod_{j \in J} A[j]$. By definition, there is $\alpha \in R$ and $b\in R^n$ such that $b$ is critical with respect to $(P((x_j)_{j \in J}, \alpha), J, J')$. It suffices to verify that $b$ is also critical with respect to $(P((x_j)_{j \in \hat{J}}, \alpha), \hat{J}, \hat{J}')$. This is true because $(P((x_j)_{j \in \hat{J}}, \alpha), \hat{J}, \hat{J}')$ is just  $(P((x_j)_{j \in J}, \alpha), J, J')$ removing the $i$th coordinate. 
\end{proof}

%%%%%%%%%%%%%%

\section{Uniformization} \label{sec: Uniformization}

Our aim in this section is to define a notion of \emph{uniformity} for families and show that any given family can be partitioned into finitely many uniform families. We continue using the setting of the previous section.

\subsection{The flat topology and uniform closure}
\label{sec:flatclosure}

A flat in $R^m$ is a solution of a system of linear equation with coefficients in $D$, so it is definable in $\cR$.
The \emph{flat topology} on $R^m$ is defined to be the topology whose closed sets are finite unions of flats in $R^m$. The fact that this is a topology follows from the eventual stability of any decreasing sequence $Z_1 \supseteq Z_2 \supseteq \ldots$ of $\cR$-flats. Set $\fcl(X)$ to be the closure of $X \subseteq R^m$ with respect to the flat topology on $R^m$. When there are more than one structure at play, we will use $\fcl_\cR(X)$ to make precise the structure under consideration.

The following lemma links flat closure and dimension.

\begin{lem} \label{lem:affineclosure}
Suppose $X \subseteq R^m$ is semilinear in $\cR$. If a flat $Z$ satisfies $\dim(X \cap Z) = \dim(Z)$, then $Z \subseteq \fcl(X)$. Moreover, if $Z_1, \ldots, Z_\ell \subseteq R^m$ are flats such that none of them contains another and $X \subseteq \bigcup_{i=1}^\ell Z_i$, then 
$$ \fcl( X) =  \bigcup_{i=1}^\ell Z_i \text{
if and only if } \dim( X \cap Z_i  ) = \dim(Z_i) \text{ for every } i \in [\ell].$$
It follows that  $\dim( \fcl(X))=\dim(X)$.
\end{lem}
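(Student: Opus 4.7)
My plan is to reduce everything to a single canonical representation of $\fcl(X)$ as a finite union of flats and then apply two elementary facts: the dimension of a finite union is the maximum of the dimensions of its pieces, and a flat sitting inside a finite union of flats must be contained in one of the pieces (\emph{irreducibility of flats}). The preparatory step is to note that semilinearity of $X$ gives a cell decomposition whose affine hulls form a finite union of flats containing $X$, so $\fcl(X)$ is itself a finite union of flats; after discarding any flat contained in another I write $\fcl(X) = Y_1 \cup \cdots \cup Y_k$ with no $Y_i$ containing any other $Y_j$.

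The first assertion will follow by intersecting with $Z$. Since $X \cap Z \subseteq \bigcup_i (Y_i \cap Z)$ and each $Y_i \cap Z$ is a flat of dimension at most $\dim(Z)$, the dimension of the right-hand side equals $\max_i \dim(Y_i \cap Z)$. The hypothesis $\dim(X \cap Z) = \dim(Z)$ then forces some $Y_i \cap Z$ to have dimension $\dim(Z)$, hence to equal $Z$, giving $Z \subseteq Y_i \subseteq \fcl(X)$.

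For the second assertion I handle the two directions separately. The $\Leftarrow$ direction is an immediate consequence of the first assertion together with the fact that $\bigcup_i Z_i$ is closed in the flat topology. For $\Rightarrow$ I argue by contradiction: assuming $\dim(X \cap Z_{i_0}) < \dim(Z_{i_0})$, semilinearity puts $X \cap Z_{i_0}$ inside a finite union of proper subflats $W_1,\dots,W_r$ of $Z_{i_0}$, so $X \subseteq \bigcup_j W_j \cup \bigcup_{i \neq i_0} Z_i$, a flat-topology-closed set that therefore contains $\fcl(X) = \bigcup_i Z_i$. In particular $Z_{i_0} \subseteq \bigcup_j W_j \cup \bigcup_{i \neq i_0} Z_i$, and irreducibility of flats forces $Z_{i_0}$ to lie in some $W_j$ (impossible, since $\dim(W_j) < \dim(Z_{i_0})$) or in some $Z_i$ with $i \neq i_0$ (contradicting the no-containment hypothesis on the $Z_i$).

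The final dimension identity will follow by applying the second assertion to the canonical decomposition $\fcl(X) = Y_1 \cup \cdots \cup Y_k$: one obtains $\dim(X \cap Y_i) = \dim(Y_i)$ for every $i$, whence $\dim(X) \geq \dim(Y_i)$ for every $i$ and therefore $\dim(X) \geq \dim(\fcl(X))$; the reverse inequality is trivial from $X \subseteq \fcl(X)$. The step I expect to be the most delicate is the irreducibility fact for flats itself, which I would justify using that $D$ (and hence $R$) is infinite, so that a flat of positive dimension cannot be covered by finitely many of its proper affine subspaces; this is a standard density argument in the o-minimal vector-space setting.
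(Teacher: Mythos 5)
Your proposal is correct and takes an approach in the same spirit as the paper's: both rest on the facts that $\fcl(X)$ is a finite union of flats, that the dimension of a finite union equals the maximum of the dimensions of the pieces, and that a flat contained in a finite union of flats must lie inside one of them. The main organizational difference is that you give a direct contradiction argument for the forward implication of the ``if and only if'' and then deduce $\dim(\fcl(X)) = \dim(X)$ as a corollary, whereas the paper proceeds by induction on $\ell$ and already invokes $\dim(\fcl(X))=\dim(X)$ in its $\ell=1$ base case (restating it at the end for emphasis); your sequencing makes the logical dependencies a bit cleaner. Two small points of care. In the $\Rightarrow$ step, the claim ``semilinearity puts $X\cap Z_{i_0}$ inside a finite union of proper subflats of $Z_{i_0}$'' should be argued via cell decomposition --- each cell of $X\cap Z_{i_0}$ has dimension $<\dim(Z_{i_0})$, and its affine hull is therefore a proper subflat of $Z_{i_0}$ --- rather than by passing to $\fcl(X\cap Z_{i_0})$ and invoking $\dim(\fcl)=\dim$, since that equality is what you derive last; otherwise the argument is circular. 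Also, the irreducibility fact you rely on follows most economically from Fact~\ref{fact:dimension}(b): if a flat $Z\subseteq\bigcup_i W_i$, then $Z=\bigcup_i(Z\cap W_i)$, so some $Z\cap W_i$ attains $\dim(Z)$, and a subflat of $Z$ of full dimension equals $Z$; no density argument or appeal to $D$ being infinite is needed, and this matches the dimension-theoretic toolkit already in play in the section.
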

\begin{proof}

For the first assertion, note  that we in fact have the stronger inclusion $Z\subseteq \fcl(X \cap Z)$. This is the case because $\dim(X \cap Z) =\dim (Z)$ implies that $\fcl(X\cap Z)$ cannot be  a finite union of subflats of $Z$.

The backward implication of the second assertion follows from the first. We prove the forward implication by induction on $\ell$.
When $\ell = 1$, it suffices to note that if $X \subseteq \R^m$ is semilinear in $\cR$ over $B$, then so is $\fcl(X)$, and, furthermore,  $\fcl(X)$ has the same dimension as $X$. The inductive case follows from the fact that, in any topology, the closure of a union of sets is the union of the closures of the sets.
\end{proof} 

The fact below follows from quatifier elimination for $\cR$. 

\begin{lem} \label{lem:closuredefinable}
Suppose $X \subseteq R^m$ is semilinear over $B \subseteq R$. Then $\fcl(X)$ is linear over $B$. 
\end{lem}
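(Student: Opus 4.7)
The plan is to use quantifier elimination to reduce $X$ to basic semilinear cells, each of whose flat closure is manifestly a single flat defined over $B$, and then assemble these via the topological properties of $\fcl$.

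By quantifier elimination for $\cR$, I would write $X$ as a finite union $X = \bigcup_{i=1}^N X_i$ where each $X_i$ is a basic cell cut out by a conjunction of linear equalities $\{f_{i,j}(x) = 0\}_j$ and strict linear inequalities $\{g_{i,k}(x) > 0\}_k$, with all polynomials having $D$-linear coefficients and constant terms drawn from $B$. Let $W_i$ denote the flat defined over $B$ by the equalities $\{f_{i,j}(x) = 0\}_j$ alone; then $X_i \subseteq W_i$ by construction.

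Next, I would show that $\fcl(X_i) = W_i$ whenever $X_i$ is non-empty. Picking any $p \in X_i$, the conditions $g_{i,k}(p) > 0$ are strict and the order on $R$ is dense, so there is a relatively open neighborhood of $p$ in $W_i$ (with respect to the order topology) on which every $g_{i,k}$ remains positive. This neighborhood is contained in $X_i$ and has the same dimension as $W_i$, hence $\dim(X_i \cap W_i) = \dim(W_i)$. Applying Lemma~\ref{lem:affineclosure} with $Z = W_i$ gives $W_i \subseteq \fcl(X_i)$, and the reverse inclusion is immediate since $W_i$ is closed in the flat topology. Because $\fcl$ commutes with finite unions, I would then conclude $\fcl(X) = \bigcup_{i : X_i \neq \emptyset} W_i$, a finite union of flats defined over $B$, which is precisely the assertion that $\fcl(X)$ is linear over $B$.

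The only mildly delicate ingredient is verifying that a non-empty basic cell is genuinely full-dimensional inside its associated flat $W_i$; once this is in hand, everything else is formal, using Lemma~\ref{lem:affineclosure} together with the fact that the flat topology is closed under finite unions.
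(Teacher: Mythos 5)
Your proposal is correct and follows essentially the same route as the paper's proof: decompose $X$ into basic cells $X_i$ given by a conjunction of linear equations and strict inequalities over $B$, observe that a non-empty such cell is open in (hence full-dimensional in) the flat $W_i$ cut out by its equations, invoke Lemma~\ref{lem:affineclosure} to get $\fcl(X_i) = W_i$, and conclude using the fact that closure commutes with finite unions. The only small difference is that you explicitly restrict to the non-empty $X_i$, which the paper leaves implicit; this is a reasonable bit of extra care.
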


\begin{proof}
Note that $X$ is defined by a semilinear formula over $B$, in other words, a disjunction of system of semilinear inequalities over $B$.
Since taking flat closure commutes with taking finite union, we can reduce the problem to the case where $X$ is defined by a formula of the form $\psi(x) \wedge \theta(x)$ where $\psi(x)$ is a conjunction of linear equation over $B$ and $\theta(x)$ is a conjunction of strict linear inequality over $B$. Then $X$ is an open subset of $\psi(\cR)$ and so has the same dimension as $\psi(\cR)$. By Lemma~\ref{lem:affineclosure}, $\fcl(X) =\psi(\cR)$ which yields the desired conclusion.
\end{proof}

As an immediate consequence of Lemma~\ref{lem:closuredefinable}, we obtain the following.
\begin{cor}
    \label{fact:stablyembedded}Let $\cR'=(R', \ldots)$ be an elementary extension of $\cR$. Suppose $X' \subseteq (\R')^m$ is definable in $\cR'$ over $R$ and that $X'$ is closed in the $\cR'$-flat topology. Then  $X' \cap R^m$ is closed in the $\cR$-flat topology. Moreover, every closed set in the $\cR$-flat topology is of this form.
\end{cor}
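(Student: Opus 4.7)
The plan is to deduce both assertions directly from Lemma~\ref{lem:closuredefinable} together with the definition of the flat topology, using only that $\cR \preceq \cR'$ to transfer between the two structures.

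For the first assertion, note that since $X'$ is closed in the $\cR'$-flat topology we have $X' = \fcl_{\cR'}(X')$. Since $X'$ is definable in $\cR'$ over $R$, it is semilinear over $R$ in $\cR'$, so Lemma~\ref{lem:closuredefinable} applied inside $\cR'$ gives that $\fcl_{\cR'}(X')$ is linear over $R$. Hence I can write $X' = \bigcup_{i=1}^\ell Z'_i$, where each $Z'_i \subseteq (R')^m$ is the solution set of a finite system $\Sigma_i$ of linear equations whose coefficients lie in $D$ and whose constants lie in $R$. Taking the trace, $X' \cap R^m = \bigcup_{i=1}^\ell (Z'_i \cap R^m)$, and each $Z'_i \cap R^m$ is the solution set in $R^m$ of the same system $\Sigma_i$; this is by definition an $\cR$-flat. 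Therefore $X' \cap R^m$ is a finite union of $\cR$-flats and hence closed in the $\cR$-flat topology.

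For the second assertion, start with an arbitrary closed set $X \subseteq R^m$ in the $\cR$-flat topology. Write $X = \bigcup_{i=1}^\ell Z_i$ where each $Z_i$ is an $\cR$-flat, say cut out by a finite system $\Sigma_i$ of linear equations with coefficients in $D$ and constants in $R$. Let $Z'_i \subseteq (R')^m$ be the flat defined in $\cR'$ by the same system $\Sigma_i$, and set $X' := \bigcup_{i=1}^\ell Z'_i$. By construction $X'$ is a finite union of $\cR'$-flats, hence closed in the $\cR'$-flat topology, and is clearly definable over $R$. By elementarity of $\cR \preceq \cR'$, the solution set of $\Sigma_i$ in $R^m$ is the same whether computed in $\cR$ or in $\cR'$, so $Z'_i \cap R^m = Z_i$ and therefore $X' \cap R^m = X$, as required.

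The only point that needs emphasis—though it is immediate from the setup—is that "linear over $R$" means cut out by systems of linear equations whose coefficients are fixed elements of $D$ and whose constants lie in $R$, and such systems define flats in both $\cR$ and $\cR'$ with compatible traces by elementarity. No further model-theoretic input is needed; the corollary is essentially a packaging of Lemma~\ref{lem:closuredefinable} with this elementary extension bookkeeping.
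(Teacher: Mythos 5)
Your proof is correct and follows exactly the route the paper intends: the paper states the corollary as an immediate consequence of Lemma~\ref{lem:closuredefinable}, and your argument is precisely the packaging of that lemma (applied in $\cR'$ with parameter set $R$, using $X'=\fcl_{\cR'}(X')$ for closed sets) with the elementary-extension bookkeeping for traces of flats defined over $R$.
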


Let $X \subseteq \R^m$ be a semilinear set such that $Z = \fcl(X)$ is a flat. The \emph{boundary} (in $Z$) of $X$, denoted by $\bd(X)$, of $X$ consists of $a\in Z$ such that, for any open cube $C$ centered  at $a$, we have
$$C \cap X \neq \emptyset \quad \text{ and  } \quad C \cap (Z \setminus X) \neq \emptyset. $$
It is immediate that $\bd(X)$ is definable and, hence, semilinear. Note, in particular, that if $H \subseteq Z$ is a half flat defined the linear inequality $\lambda_1a_1 +\ldots \lambda_ma_m \ \square \ c$ (with ${\square \in \{<, \leq\}}$), then $\bd(H)$ is the $(d-1)$-dimensional flat contained in $Z$ defined by the equation $\lambda_1a_1 +\ldots + \lambda_ma_m = c$.

A family $\Z = (Z_b)_{b \in Y}$ of subsets of  $R^m$ with $Y \subseteq R^n$ is a \emph{flat family} in $\cR$ if there is a flat $Z \subseteq R^{m+n}$ such that, for every $b\in Y$,
$$ Z_b =\{ a \in R^m : (a,b) \in Z\}, $$
or, equivalently, if there is a system $\phi(x, y)$  of linear equations in $\cR$ such that $Z_b = \phi(\cR,b)$ for every ${b \in Y}$. We refer to $Z$ the {\it total flat} of $\Z$.
When $\Z$ is a flat family and $Y \subseteq R^n$ has full dimension, it is easy to see that the flat  $Z \subseteq R^{m+n}$ as above is the flat closure of the set $\{(a,b) \in R^m \times R^n: a \in Z_b, b \in Y\}$. 

For the rest of this section, let $\X = (X_b)_{b \in Y}$ be a semilinear family in $R^m$ parameterized by $Y \subseteq R^n$. 
\begin{defn}[Uniform closure]
\label{def:uniformclosure}
We say $\X$ has \emph{uniform closure (in the flat topology)} if there is an $\ell \in \NN$ and a collection $\Xi = \{ \Z_1, \dots, \Z_\ell \}$ where each $\Z_j = (Z_{j,b})_{b\in Y}$ is a flat family in $V$ such that
$$  \fcl(X_b) = \bigcup_{j \in [\ell]} Z_{j, b} \quad \text{for every $b \in Y$}. $$
Note that, if a uniform closure  exists, it is unique. Hence, we refer to the collection $\Xi$ as the \emph{uniform closure} of $\X$. When $\ell = 1$ and $\Xi = \{ \Z_1 \}$, we say that the flat family $\Z_1$ is the uniform closure of $\X$. 
\end{defn}

For $Y'\subseteq Y$, we set $\X \upharpoonright Y':=(X_b)_{b \in Y'}.$

A standard application of compactness gives us the following:
\begin{prop} \label{prop:uniformclosure}
There is a $k \in \NN$ and semilinear sets $Y_1, \ldots, Y_k$ that partition $Y$ such that, for every $i \in [k]$, the family $\X \upharpoonright Y_i$ has uniform closure.
\end{prop}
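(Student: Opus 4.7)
The plan is to apply quantifier elimination for $\cR$ to put the graph of the family into disjunctive normal form, and then partition $Y$ according to which disjuncts are non-empty on each fiber. The compactness content is packaged in the finiteness of the DNF coming from quantifier elimination; no essentially new model-theoretic input is needed beyond what is already recorded in Lemmas~\ref{lem:affineclosure} and~\ref{lem:closuredefinable}.

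By quantifier elimination (\cite{vandries1998}*{Corollary 7.8}), the graph $Z := \{(a,b) \in R^m \times R^n : a \in X_b,\ b \in Y\}$ can be written as a finite union $Z = \bigcup_{j=1}^k Z_j$, where each $Z_j$ is defined by a formula of the form $\psi_j(x,y) \wedge \theta_j(x,y)$, with $\psi_j$ a conjunction of linear equalities and $\theta_j$ a conjunction of strict linear inequalities (both with coefficients in $D$). Accordingly, for each $b \in Y$, write $X_b = \bigcup_{j=1}^k X_{j,b}$ with $X_{j,b} := \{a \in R^m : \psi_j(a,b) \wedge \theta_j(a,b)\}$, and let $F_{j,b} := \{a \in R^m : \psi_j(a,b)\}$, which is a flat in $R^m$.

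The key observation is that $\theta_j$ defines a (relatively) open condition on $F_{j,b}$: hence either $X_{j,b} = \emptyset$, or $X_{j,b}$ is open and non-empty in $F_{j,b}$, in which case $\dim(X_{j,b}) = \dim(F_{j,b})$. By Lemma~\ref{lem:affineclosure}, in the latter case $\fcl(X_{j,b}) = F_{j,b}$. So for every $b \in Y$,
\[
\fcl(X_b) \ =\ \bigcup_{j \in S(b)} F_{j,b}, \qquad \text{where } S(b) := \{j \in [k] : X_{j,b} \neq \emptyset\}.
\]

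For each subset $S \subseteq [k]$, set $Y_S := \{b \in Y : S(b) = S\}$. Each $Y_S$ is semilinear, since the condition ``$X_{j,b} \neq \emptyset$'' is defined by the formula $\exists x\, (\psi_j(x,b) \wedge \theta_j(x,b))$, which is semilinear by quantifier elimination. The non-empty $Y_S$ thus partition $Y$ into at most $2^k$ semilinear pieces. For each such $S$, the family $\Z_j := (F_{j,b})_{b \in Y_S}$ is a flat family with total flat $\{(a,b) : \psi_j(a,b)\} \cap (R^m \times Y_S)$ in the sense preceding Definition~\ref{def:uniformclosure}, and the collection $\Xi_S := \{\Z_j : j \in S\}$ witnesses that $\X \upharpoonright Y_S$ has uniform closure.

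The only nontrivial point is confirming that the strict-inequality part does not drop dimension inside $F_{j,b}$, which is immediate from openness of $\theta_j$ together with Lemma~\ref{lem:affineclosure}; everything else is bookkeeping. If one prefers to avoid the explicit DNF, the same partition can be produced by the standard compactness argument: in a sufficiently saturated extension $\cR^*$, Lemma~\ref{lem:closuredefinable} makes $\fcl(X_{b^*})$ uniformly definable over $b^*$, and by compactness only finitely many ``profiles'' of defining flats arise, yielding the required finite semilinear partition of $Y$.
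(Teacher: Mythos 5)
Your proof is correct and takes a genuinely more direct route than the paper's. The paper runs a saturation argument: working in a $(|R|+\aleph_0)$-saturated elementary extension $\cR'$, it encodes the condition ``$\fcl(\phi(\cR',b'))=\psi(\cR',b')$'' in a formula $\rho_\psi(y)$ via definability of dimension, observes that the sets $\rho_\psi(\cR')$ cover $(R')^n$ because each individual flat closure is linear over the relevant parameters, and extracts a finite subcover by saturation. You instead read the answer directly off the disjunctive normal form supplied by quantifier elimination: on each fiber, each disjunct $\psi_j\wedge\theta_j$ contributes either $\emptyset$ or a relatively open, full-dimensional subset of the flat $F_{j,b}=\psi_j(\cR,b)$, so by Lemma~\ref{lem:affineclosure} one gets $\fcl(X_b)=\bigcup_{j\in S(b)}F_{j,b}$ with $S(b)$ the set of active disjuncts, and partitioning $Y$ by the combinatorial type $S(b)$ finishes the job. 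This is precisely a fiberwise version of the paper's own proof of Lemma~\ref{lem:closuredefinable}, and it makes explicit that for this proposition the compactness input amounts to nothing more than the finiteness of the DNF, as you note. Two cosmetic points: the total flat of your $\Z_j$ should be taken to be $\{(a,b):\psi_j(a,b)\}$ itself rather than its intersection with $R^m\times Y_S$ (the latter is generally not a flat, and the definition of flat family only constrains fibers over the parameter set anyway); and if one wants the resulting collection to satisfy the irredundancy implicit in the uniqueness remark after Definition~\ref{def:uniformclosure}, one should further refine the partition of $Y$ by the containment pattern among the flats $F_{j,b}$ for $j\in S$, which is again a semilinear condition. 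The paper's heavier machinery pays off later, in Proposition~\ref{prop:uniformapproximation}, where the analogous hands-on bookkeeping for essential approximations would be substantially more tedious.
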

\begin{proof}
Let  $\phi(x,y)$ be a semilinear formula in $\cR$ such that, for every $b \in Y$, we have
$$ X_b = \phi(\cR,b). $$
For the next part of the proof, assume $\psi(x,y)$ is linear formula  over $R$. 
Recall that flat closure can be characterized using dimension (Lemma~\ref{lem:affineclosure}). In addition, dimension is definable and preserved under elementary extension (Facts~\ref{fact:dimension}(c) and \ref{fact:definabilitydimension}). Hence, there is a semilinear formula  $\rho_\psi(y)$ over $R$ such that, for every elementary extension $\cR'=(R', \ldots)$ of $\cR$ and $b' \in (R')^n$,
$$ \fcl_{\cR'}(\phi(\cR', b'))= \psi(\cR',b') \text{ if and only if } \cR' \models \rho_\psi(b').$$
In particular, the above holds when $\cR'=\cR$.

Now, let $\cR' =(R'; \ldots)$ be a fixed $(|R|+\aleph_0)$-saturated elementary extension of $\cR$ and $b' =( b'_1, \ldots, b'_n)$ be an arbitrary element in $R^n$.
Then $\phi(\cR', b')$ is semilinear over $R \cup \{ b_i : i \in [n] \}$, and, hence,  $\fcl_{\cR'}(\phi(\cR', b'))$ is linear over   $R \cup \{ b_i : i \in [n] \}$ by Lemma~\ref{lem:closuredefinable}.
Hence, one can choose a linear formula $\psi'(x,y)$ over $R$ (depending on $b'$ and possibly different from $\psi(x,y)$)  such that   
$$ \fcl_{\cR'}(\phi(\cR', b'))= \psi'(\cR',b'). $$
As a consequence, when $\psi(x,y)$ varies over linear formulas over $R$ with variables in $(x,y)$, the corresponding $\rho_\psi(\cR')$ forms a cover of $(R')^n$. 
Note that there are only $(|R|+\aleph_0)$-many choices of $\psi$ as described. Hence, by the assumption that $\cR'$ is $(|R|+\aleph_0)$-saturated, we can choose linear formulas $\psi_1(x,y), \ldots, \psi_k(x,y)$ over $R$ such that $(R')^n = \bigcup_{i \in [k]} \rho_{\psi_i}(\cR')$. Since each $\rho_{\psi_i}(y)$ is a formula over $R$, and $\cR$ is an elementary substructure of $\cR'$ we obtain
$$R^n = \bigcup_{i \in [k]} \rho_{\psi_i}(\cR).$$
Finally, set $Y_1= Y \cap \rho_{\psi_1}(R)$, and $Y_i = (Y \cap \rho_{\psi_i}(R)) \setminus \bigcup_{j<i} Y_j$ for $i \in [k] \setminus \{1\}$. It is easily verified that $Y_1, \ldots, Y_k$ form the desired partition.
\end{proof}

\subsection{Uniform essential approximation}
\label{sec:essentialapproxiation}

Throughout this section, let $X \subseteq \R^m$ be a semilinear set such that $Z = \fcl(X)$ is a flat with $\dim(Z) = \dim(X) = d$. The \emph{essential boundary} of $X$, denoted by $\esb(X)$, consists of $a\in Z$ such that
$$\dim_a(Z \cap X) =\dim_a (Z\setminus X) = \dim_a(Z) =d.$$
Here $\dim_a$ is the local dimension at $a$, see Appendix~\ref{app:modelprelim} for definitions.
In particular, for any open cube $C$ centererd at $a \in \esb(X) $, the sets $C \cap  X$ and $C \cap (Z\setminus X)$ are non-empty. Hence, $\esb(X)$ is a subset of $\bd(X)$. Note that, if $X$ is semilinear, then $\esb(X)$ is also semilinear.

The above definition of essential boundary is a variation of the definition in~\cite{hty20}.  We require the following result, which corresponds to Lemma~3.5 in~\cite{hty20}. Since the proof requires only minor modifications, we leave it as an exercise for the reader.
\begin{fact}
\label{fact:essentialdim}
If $E = \esb(X)$, and $a \in E$, then $$\dim_a(E)= \dim(E) =\dim(X)-1.$$ 
\end{fact}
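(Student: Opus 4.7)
The approach is to combine a local semilinear cell decomposition of $X$ inside $Z$ with a straightforward dimension count. By quantifier elimination for $\cR$, I would write $X \cap Z$ and $Z \setminus X$ as finite disjoint unions of relatively open semilinear cells in $Z$, each cell being a nonempty intersection of a flat in $Z$ with a finite conjunction of strict linear inequalities. The top-dimensional (i.e.\ $d$-dimensional) cells are relatively open polytopes in $Z$, and the boundary of each such polytope is a finite union of pieces of $(d-1)$-flats contained in $Z$. This sets up a global structure: $\bd(X)$ lies in a finite union $\bigcup_j F_j$ of such $(d-1)$-flats.

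Fix $a \in E$. For the lower bound $\dim_a(E) \geq d-1$, the conditions $\dim_a(Z \cap X) = \dim_a(Z \setminus X) = d$ force $a$ to lie in the topological closure (in $Z$) of at least one $d$-dimensional cell $C_1 \subseteq X$ and at least one $d$-dimensional cell $C_2 \subseteq Z \setminus X$. Choose an open cube $B$ in $Z$ around $a$ small enough that the only cells of the decomposition meeting $B$ are those whose closures contain $a$. Any straight segment in $B \cap Z$ from a point of $C_1$ to a point of $C_2$ must cross $\bd(X)$, so some $(d-1)$-flat $F_j$ separates these cells on an open piece of $B$ adjacent to $a$. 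At every point $a'$ of that piece, every neighborhood meets both a top-dimensional cell of $X$ and a top-dimensional cell of $Z \setminus X$; hence $a' \in E$. This produces a relatively open piece of $F_j$ around $a$ lying in $E$, giving $\dim_a(E) \geq d - 1$.

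For the upper bound, $E \subseteq \bd(X) \subseteq \bigcup_j F_j$, and each $F_j$ has dimension $d-1$, so $\dim(E) \leq d-1$. Combined with the trivial inequalities $\dim_a(E) \leq \dim(E)$ and the lower bound above, I get $d - 1 \leq \dim_a(E) \leq \dim(E) \leq d-1$, yielding the claim.

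The main obstacle is the local step in the lower bound: carefully justifying, from the two full-dimensional local conditions at $a$, that one can select cells $C_1 \subseteq X$ and $C_2 \subseteq Z \setminus X$ whose closures both contain $a$ and that share a common $(d-1)$-dimensional boundary face meeting every neighborhood of $a$. Once this is established (by shrinking $B$ and using the finiteness of the cell decomposition together with the connectedness of small cubes in $Z$), the rest is an immediate dimension count.
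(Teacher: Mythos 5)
Note first that the paper does not actually prove this Fact: it refers to \cite{hty20}*{Lemma 3.5} and leaves the (minor) modifications as an exercise, so there is no in-paper argument to compare against. Your cell-decomposition strategy is the natural one and almost certainly matches what that reference does; the outline is sound and the conclusion is correct. Two points deserve sharpening.

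First, the assertion ``$\bd(X)$ lies in a finite union $\bigcup_j F_j$ of $(d-1)$-flats'' is not quite right: $X$ may have lower-dimensional chunks (e.g.\ isolated points, or a segment hanging off the side), and those contribute to $\bd(X)$ without lying on any boundary flat of a $d$-dimensional cell. What is true, and is all you need, is $\esb(X) \subseteq \bigcup_j F_j$: a point $a \in E$ has $\dim_a(X) = d$ but is not interior to any $d$-cell of $X$ (otherwise $\dim_a(Z\setminus X) < d$), hence it lies on the frontier of some $d$-cell, which is contained in the $F_j$. Second, the step you flag as the ``main obstacle'' is genuine but closable by exactly the tools you name. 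Take $B$ small so that only cells whose closure contains $a$ meet $B$, and let $U_1 = \int_Z(X)$, $U_2 = \int_Z(Z\setminus X)$. These are disjoint open sets with $\overline{U_1} \cap \overline{U_2} = \esb(X)$ and $\overline{U_1} \cup \overline{U_2} = Z$ (the latter because $\dim_{a'}(Z) = d$ forces at least one of $\dim_{a'}(X)$, $\dim_{a'}(Z\setminus X)$ to be $d$). If $\dim(\esb(X) \cap B) \leq d-2$, then $B' := (B \cap Z)\setminus \esb(X)$ is a $d$-dimensional open convex polytope minus a set of codimension $\geq 2$, hence definably connected; but $B' = (\overline{U_1} \cap B') \sqcup (\overline{U_2} \cap B')$ is a partition into two nonempty relatively closed sets, a contradiction. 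Equivalently, in the adjacency graph on $d$-cells of $B$ (two cells joined when they share a $(d-1)$-face), connectedness of $(B\cap Z)$ minus the $\leq(d-2)$-skeleton forces an edge between a $d$-cell of $X$ and a $d$-cell of $Z\setminus X$; the shared face lies in $\esb(X)$ and has $a$ in its closure, giving $\dim_a(E) \geq d-1$. Beware that your phrasing ``select $C_1$ and $C_2$'' can suggest the two cells you first produce share a face; they need not (e.g.\ two opposite cones touching only at $a$), so the connectedness argument really is needed to find \emph{some} adjacent pair of opposite type.
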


A half-flat $H \subseteq Z$ is \emph{essential} for $X$ if there is $a \in \bd(H)$ and an open cube $C$ centered at $a$ such that 
$$X \cap C =  H \cap C.$$
We say such an $a \in Z$ \emph{witnesses} that $H$ is essential for $X$.

The following lemma establishes a relationship between essential half-flats and the essential boundary. 
\begin{lem}
\label{lem:witnesssetdim}
Suppose that the half-flat $H \subseteq Z$ is essential for $X$. Then we have the following:
\begin{enumerate}[label = (\alph*)]
    \item If $a\in Z$ witnesses that $H$ is essential for $X$, then $a \in \esb(X)$.
    \item the set of points witnessing that $H$ is essential for $X$ has dimension $d - 1$.
    \item \label{itm:witnesssetdim-c}$\bd(H) \subseteq \fcl(\esb(X))$.
\end{enumerate}
\end{lem}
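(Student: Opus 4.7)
For (a), let $a$ witness that $H$ is essential, and fix an open cube $C$ centered at $a$ with $X \cap C = H \cap C$. Since $X \subseteq \fcl(X) = Z$, we have $Z \cap X = X$, so $\dim_a(Z \cap X) = \dim_a(X \cap C) = \dim_a(H \cap C)$. Now $H$ is a $d$-dimensional half-flat in $Z$ with $a \in \bd(H)$, so $H \cap C$ is an open half-ball inside $Z$ and hence has local dimension $d$ at $a$; by the same reasoning, $(Z \setminus X) \cap C = (Z \cap C) \setminus H$ is the complementary open half and also has local dimension $d$ at $a$. Combined with $\dim_a(Z) = d$, this verifies $a \in \esb(X)$.

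For (b), let $W$ denote the set of witnesses. The plan is to show that $W$ is open (and non-empty) in $\bd(H)$, viewed with its Euclidean topology as a $(d-1)$-flat. If $a \in W$ with witnessing cube $C$, then for any $a' \in C \cap \bd(H)$ we can choose an open cube $C' \subseteq C$ centered at $a'$, and $X \cap C' = (X \cap C) \cap C' = (H \cap C) \cap C' = H \cap C'$. Hence $C \cap \bd(H) \subseteq W$, so $W$ contains a non-empty open subset of $\bd(H)$ and therefore has dimension $d-1$. Since $W \subseteq \bd(H)$, the dimension is exactly $d-1$.

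For (c), I first observe that $W$ is semilinear: its defining condition ``$a \in \bd(H)$ and there exists $\epsilon > 0$ with $X \cap B_\epsilon(a) = H \cap B_\epsilon(a)$'' is expressible by a formula in $\cR$, so quantifier elimination makes $W$ semilinear. By (a), $W \subseteq \esb(X)$, whence $\fcl(W) \subseteq \fcl(\esb(X))$ by monotonicity of flat closure. By (b), $\dim(W \cap \bd(H)) = \dim(W) = d-1 = \dim(\bd(H))$, so Lemma~\ref{lem:affineclosure} (applied to the semilinear set $W$ and the flat $\bd(H)$) yields $\bd(H) \subseteq \fcl(W)$. Chaining the inclusions gives $\bd(H) \subseteq \fcl(\esb(X))$, as desired.

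The only mildly subtle point is the local-dimension bookkeeping in (a); once one accepts that a half-flat of a $d$-flat has open halves of local dimension $d$ on either side of its boundary, everything else is either a direct topological observation (part (b)) or a clean application of Lemma~\ref{lem:affineclosure} (part (c)). I do not anticipate a genuine obstacle.
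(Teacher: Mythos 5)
Your proof is correct and follows the same overall structure as the paper's: (a) by direct local-dimension bookkeeping, (b) by showing the witness set $W$ has a lower-bound of dimension $d-1$, and (c) by combining (a), (b), and Lemma~\ref{lem:affineclosure}. The only real divergence is in (b): the paper shows $C \cap \esb(X) \subseteq W$ and then invokes Fact~\ref{fact:essentialdim} (i.e.\ $\dim_a(\esb(X)) = d-1$) to get the lower bound, whereas you show directly that $C \cap \bd(H) \subseteq W$ --- in fact that $W$ is open in $\bd(H)$ --- and conclude $\dim(W) = d-1$ since $\bd(H)$ is a $(d-1)$-flat. Your route is more elementary, as it sidesteps the essential-boundary dimension fact (which is imported from \cite{hty20}) and also avoids the implicit step in the paper's argument that points of $C \cap \esb(X)$ must lie on $\bd(H)$; the paper's route, by contrast, establishes a relationship between $W$ and $\esb(X)$ that is thematically aligned with the surrounding definitions. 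One small ordering remark: you should note that $W$ is semilinear (which you do verify in part (c) via quantifier elimination) before asserting a value for $\dim(W)$ in part (b), but this is cosmetic.
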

\begin{remark}
The assertion of Lemma~\ref{lem:witnesssetdim}\ref{itm:witnesssetdim-c} can be strengthened: $\fcl(\esb(X))$ is the union of essential half-flats of $X$. Since we do not require this fact, we prove the weaker statement.
\end{remark}
\begin{proof}[Proof of Lemma~\ref{lem:witnesssetdim}]
Assertion (a) is immediate from the definitions.

To see (b), let $W$ be the set of points witnessing that $H$ is essential for $X$. Since $W \subseteq \bd(H)$, we have $\dim(W) \leq d-1$. Hence, it suffices to show $\dim(W) \geq d-1$. Note that $W$ is non-empty by definition. Let $a \in W$ and let $C$ be an open cube centered at $a$ such that $X \cap C = H \cap C$. Note that $C \cap \esb(X) \subseteq W$. Indeed, for any point $a' \in C \cap \esb(X)$, we may take an open cube $C' \subseteq C$ centered at $a'$. Evidently, $X \cap C' = H \cap C'$ and, hence, $a' \in W$. The assertion now follows by Fact~\ref{fact:essentialdim}, i.e., that $\dim(\esb(X)) = d - 1$.

Finally, (c) follows from (a), (b), and  Lemma~\ref{lem:affineclosure}.
\end{proof}

\begin{defn}[Essential approximation]
Let $\ell \in \NN$ and $\E = \{ H_1, \ldots, H_\ell \}$ be a collection of half-flats contained in $Z$. We say that $\E$ is an \emph{essential approximation} of $X$ if the following hold:
\begin{enumerate}[label = (\alph*)]
    \item for every $i \in [\ell]$, the half-flat $H_i$ is essential for $X$;
    \item if a half-flat $H\subseteq Z$ is essential for $X$, then $\bd(H) = \bd(H_i)$ for some $i \in [\ell]$;
    \item $\bd(H_i) \neq \bd(H_j)$ for every distinct $i, j \in [\ell]$.
\end{enumerate}
\end{defn}
Note that it is possible that an essential approximation is empty, i.e., $\E = \emptyset$. This happens, e.g., when $X$ is a flat or a flat minus a lower dimensional set.

\begin{prop}
There is an $\ell \in \NN$ and a collection of half-flats $\{ H_1, \ldots, H_\ell \}$ which is an  essential approximation of $X$. Moreover, if $X$ is semilinear over $B\subseteq R$, then we can choose $H_1, \ldots, H_\ell$ to be half-flats over $B$.
\end{prop}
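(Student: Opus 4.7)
The plan is to identify candidate boundaries for essential half-flats inside $\fcl(\esb(X))$, and then filter to those that actually arise. Set $E = \esb(X)$; by Fact~\ref{fact:essentialdim}, $\dim(E) = d - 1$. Apply Lemma~\ref{lem:affineclosure} to decompose
$$\fcl(E) \;=\; F_1 \cup \cdots \cup F_k \cup G_1 \cup \cdots \cup G_s,$$
where the $F_i \subseteq Z$ are the components of dimension exactly $d - 1$, the $G_j \subseteq Z$ are flats of strictly smaller dimension, and none of these flats contains another. The guiding observation is that the $F_i$ are the only flats in $Z$ that can arise as the boundary of an essential half-flat of $X$.

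For each $i \in [k]$, the flat $F_i$ is the boundary of at most four half-flats of $Z$: the two open and two closed sides determined by the linear functional defining $F_i$ inside $Z$. Inspect each of these four candidates and check whether it is essential for $X$. If none of them is, discard $F_i$; otherwise pick any essential one and call it $H_i$. After relabeling the retained indices, take $\E = \{H_1, \ldots, H_\ell\}$ with $\ell \leq k$. Conditions~(a) and~(c) in the definition of essential approximation are then immediate from the construction.

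The crucial condition to verify is~(b). Suppose $H \subseteq Z$ is an essential half-flat for $X$. By Lemma~\ref{lem:witnesssetdim}(c), $\bd(H) \subseteq \fcl(E)$, so
$$\bd(H) \;=\; \bigcup_{i=1}^{k}\bigl(\bd(H) \cap F_i\bigr) \;\cup\; \bigcup_{j=1}^{s}\bigl(\bd(H) \cap G_j\bigr).$$
Since $\bd(H)$ has dimension $d-1$ while each $\bd(H) \cap G_j$ has dimension at most $\dim(G_j) < d-1$, at least one intersection $\bd(H) \cap F_i$ must have dimension $d-1$. That intersection is a subflat of both $F_i$ and $\bd(H)$, and both of those are $(d-1)$-dimensional flats, forcing $\bd(H) = F_i$. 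Since $H$ witnesses that $F_i$ bounds an essential half-flat, $F_i$ was retained in our construction, so $\bd(H) = \bd(H_j)$ for some $j \in [\ell]$, verifying~(b).

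For the moreover part, if $X$ is semilinear over $B$, then so is $E = \esb(X)$, and Lemma~\ref{lem:closuredefinable} gives that $\fcl(E)$ is linear over $B$; its top-dimensional components $F_i$ are then also linear over $B$, and each of the (at most four) half-flats with boundary $F_i$ is cut out from $Z$ by the defining linear equation of $F_i$ together with a strict or non-strict inequality, hence is over $B$. The main obstacle in this argument is the dimension bookkeeping for~(b): one needs the irreducibility-style fact that a $(d-1)$-dimensional flat cannot be covered by a finite union of subflats of strictly smaller dimension, together with the observation that any top-dimensional component of $\fcl(\esb(X))$ meeting $\bd(H)$ in top dimension must coincide with it. Everything else is routine accounting around the decomposition of $\fcl(\esb(X))$.
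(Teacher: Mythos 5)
Your proposal is correct and follows essentially the same route as the paper: identify the candidate boundaries inside $\fcl(\esb(X))$ via the dimension fact $\dim(\esb(X)) = d-1$, use Lemma~\ref{lem:witnesssetdim}(c) to confine all essential boundaries there, observe that each $(d-1)$-dimensional subflat bounds only four half-flats, and appeal to Lemma~\ref{lem:closuredefinable} for the definability claim. You spell out the verification of condition~(b) (the flat-covering dimension argument) in more detail than the paper, which simply asserts that $X$ has finitely many essential half-flats, but the underlying idea is identical.
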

\begin{proof}
By Lemma~\ref{lem:affineclosure}, and Facts~\ref{fact:dimandlocaldim} and \ref{fact:essentialdim}, we have $\dim(\fcl(\esb(X)))=d-1$. Hence, $\fcl(\esb(X))$ contains finitely many $(d-1)$-dimensional subflats flats of $Z$. Recall that the boundary of a essential half-flat of $X$ is a $(d-1)$-dimensional subflat of $Z$, which is contained in $\fcl(\esb(X))$ by Lemma~\ref{lem:witnesssetdim}(c). Moreover, for a given $(d-1)$-dimensional subflat of $Z$, there are exactly four half-flats $H \subseteq Z$ satisfying $\esb(H)=Z'$.
It follows that $X$ has finitely many essential half-flats, which implies the first assertion. The second assertion follows from Lemma~\ref{lem:closuredefinable}.
\end{proof}

From here on, let $\E = \{ H_1, \ldots, H_\ell \}$ be an essential approximation of $X$. We set $\P = \P(\E) = \{P_J : J \subseteq [\ell]\}$ to be the partition of $Z$ induced by $\E$ where, for each $J \subseteq [\ell]$, 
$$ P_J : =  \left( \bigcap_{j \in J} H_j \right) \cap \left( \bigcap_{j \in [\ell]\setminus J} Z \setminus H_j \right).  $$
Note that if $\E = \emptyset$, then $\P(\E)$ is the trivial partition, i.e., $\P(\E) = \{Z\}$.

\begin{lem}
\label{lem:essentialapproxdim}
Let $J \subseteq [\ell]$. Then the following hold
\begin{enumerate}[label = (\alph*)]
    \item \label{itm:essentialapproxdim1} either $\dim(P_J) = d$, or $\dim(P_J) \leq  d-2$;
    \item \label{itm:essentialapproxdim2} either $\dim (P_J \cap  X )\leq  d-1$, or  $\dim (P_J \setminus X ) \leq d-1$.
\end{enumerate}
\end{lem}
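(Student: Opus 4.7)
The plan is to obtain (a) as a convex-geometric statement resting on condition (c) of the essential approximation, and to obtain (b) as a local-constancy argument on the open convex interior of $P_J$ once one knows $\fcl(\esb(X)) = \bigcup_{j \in [\ell]} \bd(H_j)$.

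For (a), I would work with the closed convex polyhedron
\[ Q_J := \bigcap_{j \in J} \overline{H_j} \cap \bigcap_{j \in [\ell] \setminus J} \overline{Z \setminus H_j}. \]
Since every interior point of $Q_J$ satisfies all the defining inequalities strictly, $\operatorname{int}_Z(Q_J) \subseteq P_J \subseteq Q_J$, regardless of whether each $H_j$ was open or closed. Hence $\dim(P_J) = d$ whenever $\dim(Q_J) = d$, and $\dim(P_J) \leq \dim(Q_J)$ always, so it suffices to rule out $\dim(Q_J) = d-1$. If that held, the affine hull $L := \operatorname{aff}(Q_J)$ would be a hyperplane, and every direction $\eta \in L - a$ would have to be feasible at any $a \in \operatorname{relint}(Q_J)$, forcing the defining functional of each constraint active at $a$ to vanish on $L - a$ and hence have boundary equal to $L$. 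On the other hand, since $Q_J \subseteq L$, both perpendicular directions off $L$ must be blocked by active constraints, which requires two distinct active constraints whose boundaries coincide with $L$. As each $j \in [\ell]$ contributes exactly one half-space to $Q_J$, these two constraints arise from distinct indices of $[\ell]$ with the same boundary, contradicting condition (c) of the essential approximation.

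For (b), the case $\dim(P_J) \leq d - 1$ makes both conclusions automatic, so by (a) I reduce to $\dim(P_J) = d$. Setting $U := \operatorname{int}_Z(P_J)$, the convexity of $P_J$ (as an intersection of half-flats) makes $U$ non-empty, open, convex, and therefore connected; moreover $U \cap \bd(H_j) = \emptyset$ for every $j \in [\ell]$. I would upgrade Lemma~\ref{lem:witnesssetdim}\ref{itm:witnesssetdim-c} to the equality $\fcl(\esb(X)) = \bigcup_{j \in [\ell]} \bd(H_j)$ (as alluded to in the remark following that lemma): for each $(d-1)$-dimensional subflat $L$ of $\fcl(\esb(X))$, Lemma~\ref{lem:affineclosure} gives $\dim(\esb(X) \cap L) = d-1$, so we may choose $a \in \esb(X) \cap L$ off every other top-dimensional subflat of $\fcl(\esb(X))$; near such $a$ the set $X$ agrees with a half-flat of boundary $L$, and clause (b) of the essential approximation then forces $L = \bd(H_j)$ for some $j$. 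Consequently $U \cap \esb(X) = \emptyset$.

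Each $a \in U$ therefore satisfies $a \notin \esb(X)$ while $\dim_a(Z) = d$, so exactly one of $\dim_a(X) < d$ or $\dim_a(Z \setminus X) < d$ holds; this defines a map $f: U \to \{0,1\}$ that is locally constant, since the inequality $\dim(X \cap C) < d$ for an open cube $C \ni a$ passes to every point of $C$. Connectedness of $U$ then makes $f$ globally constant. If $f \equiv 0$, then $\dim(X \cap U) \leq d - 1$, and combining with $\dim(P_J \setminus U) \leq d - 1$ (the relative boundary of the $d$-dimensional convex set $P_J$) gives $\dim(P_J \cap X) \leq d - 1$; the case $f \equiv 1$ symmetrically yields $\dim(P_J \setminus X) \leq d - 1$. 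The main technical obstacle is the convex-geometric step in (a) establishing that pairwise distinct boundary hyperplanes preclude the intermediate dimension $d - 1$; once that is in place, (b) becomes a direct dimension-and-connectedness argument.
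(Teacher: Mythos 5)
Your proof of part~\ref{itm:essentialapproxdim1} is correct and is essentially the same convex-geometric fact the paper uses, though you argue it directly via active constraints at a relative-interior point of the closed polyhedron $Q_J$ rather than by induction on the number of half-flats as the paper does. Both versions are fine; the one small point to flag is that the inclusion $\operatorname{int}_Z(Q_J) \subseteq P_J$ deserves a line (it holds because the interior of a closed half-flat is its open half-flat, which is contained in $H_j$ resp.\ $Z\setminus H_j$ irrespective of whether that set was taken open or closed).

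For part~\ref{itm:essentialapproxdim2} you take a genuinely different route. The paper argues directly by contradiction: assuming both $\dim(X\cap P_J)=d$ and $\dim((Z\setminus X)\cap P_J)=d$, it produces a point of $\esb(X)\cap P_J$ near which $X$ agrees with a half-flat $H$, so $H$ is essential; condition (b) of the essential approximation then forces $\bd(H)=\bd(H_j)$ for some $j$, contradicting the definition of $P_J$. Your route instead proves $U\cap\esb(X)=\emptyset$ for $U=\operatorname{int}_Z(P_J)$ and then uses local constancy of the dichotomy ``$\dim_a(X)<d$ vs.\ $\dim_a(Z\setminus X)<d$'' on the connected open set $U$. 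The architecture of that argument is clean, and the local-constancy step and the estimate $\dim(P_J\setminus U)\leq d-1$ are both sound. However, there is a real gap in the ingredient you rely on: you invoke the equality $\fcl(\esb(X))=\bigcup_{j\in[\ell]}\bd(H_j)$, which is precisely the strengthening that the remark after Lemma~\ref{lem:witnesssetdim} explicitly declines to prove, and your one-line justification --- ``near such $a$ the set $X$ agrees with a half-flat of boundary $L$'' --- is exactly where the content lies and is asserted rather than shown. A generic point of $\esb(X)\cap L$ avoiding the other top-dimensional flats of $\fcl(\esb(X))$ only gives you that, on each side of $L$, the local picture is \emph{constant} (via your own connectedness argument); upgrading that to the \emph{exact} equality $X\cap C=H\cap C$ for a small cube $C$ (which the definition of ``essential'' requires) needs an additional genericity step to avoid the lower-dimensional parts of $X$ and of $Z\setminus X$ that can live inside the ``wrong'' side or on $\bd(H)$. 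The paper's Lemma~\ref{lem:essentialcriterion} is precisely the tool that makes this rigorous: you would need to show that for a suitable half-flat $H$ with $\bd(H)=L$, the set $X\cap\bd(H)\cap\esb(X)\cap\esb(X\cap H)$ has dimension $d-1$, and then invoke that lemma. Without such a step, the claimed equality $\fcl(\esb(X))=\bigcup_j\bd(H_j)$ is unjustified, and with it your part~\ref{itm:essentialapproxdim2} argument would become correct but notably longer than the paper's direct contradiction.
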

\begin{proof}
Assertion \ref{itm:essentialapproxdim1} follows from the fact that, if $k \in \NN$, and $\mathcal{H}$ is the intersection of $k$ half-flats of dimension $d$ with distinct boundaries, then either $\dim(\mathcal{H}) = d$ or $\dim(\mathcal{H}) \leq d - 2$. We prove this by induction. The assertion is clearly true when $k = 1$. Let $\mathcal{H}$ be the intersection of half-flats $h_1, \dots, h_k$ with $k > 1$, and assume that the statement is true for smaller values of $k$. Suppose, for contradiction, that $\dim(\mathcal{H}) = d-1$, and note that we must have $\dim(\bigcap_{i \in [k - 1]}h_i) = d$, otherwise the inductive hypothesis is contradicted. But then $\dim(\mathcal{H}) = d-1$ contradicts the assumption that the half-flats have distinct boundaries.

We now prove \ref{itm:essentialapproxdim2}. 
Fix $J \subseteq [\ell]$, and suppose that $\dim(X \cap P_J) = \dim ( (Z \setminus X) \cap P_J ) = d$. Then, there is a point $a \in \esb(X) \cap P_J$, a half-flat $H \subseteq Z$ containing $a$, and an open cube $C$ centered at $a$ such that $X \cap C = H \cap C$. That is, $H$ is essential for $X$. But then, by definition of $P_J$, we have $P_J \subseteq H$ or $P_J \subseteq Z \setminus H$, a contradiction.
\end{proof}

A family $\h = (H_b)_{b \in Y}$ of subsets of  $R^m$ with $Y \subseteq R^n$ is a \emph{half-flat family} in $\cR$ if there is a half-flat $H \subseteq R^{m+n}$ such that, for every $b\in Y$,
$$ H_b =\{ a \in R^m : (a,b) \in H\}. $$
We refer to $H$ the {\it total half-flat} of $\h$.
Suppose $\h$ is a half-flat family and $H \subseteq R^{m+n}$ is as above. Let $Z = \fcl(H)$ and $\Z = (Z_b)_{b \in Y}$ with $Z_b = \{a \in R^m : (a, b) \in Z\}$, i.e., $Z$ is the total flat of $\Z$. Then, for every $b \in Y$, we have $\fcl(H_b) = Z_b$.

For the rest of this section, let $\X = (X_b)_{b\in Y}$ be a semilinear family in $R^m$ parameterized by $Y \subseteq R^n$. Suppose that the uniform closure is the flat family $\Z = (Z_{b})_{b \in Y}$.

\begin{defn}[Uniform essential approximation]
A (possibly empty) collection of half-flat families $\Upsilon = \{ \h_1, \dots, \h_\ell \}$ with $\h_j = (H_{j,b})_{b\in Y}$ is a \emph{uniform essential approximation} of $\X$ if, for every $b\in Y$:
\begin{enumerate}[label = (\alph*)]
    \item the collection $\E_b := \{ H_{j,b} : j \in [\ell] \}$ is an essential approximation of $X_b$;
   \item if $\P(\E_b) = \{P_{J, b} : J \subseteq [\ell] \}$ is the partition of $Z_b$ induced by $\E_b$,
   then, for each $J \subseteq [\ell]$, the dimension of $P_{J,b}\cap X_b$ and $P_{J,b} \setminus X$ is constant as $b$ ranges over $Y$.
\end{enumerate}
Note that it is possible that a uniform essential approximation is empty. This happens, e.g., when every member of $\X$ is a flat, or a flat minus a lower dimensional set. 
\end{defn}

The main point of this section is the following proposition.
\begin{prop}
\label{prop:uniformapproximation}
There is $k \in \NN$ and semilinear sets $Y_1, \ldots, Y_k$ that partition $Y$ such that, for every $i \in [k]$, the family $\X \upharpoonright Y_i$ has a uniform essential approximation. 
\end{prop}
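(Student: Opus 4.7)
The plan is to mimic the proof of Proposition~\ref{prop:uniformclosure} via a saturation/compactness argument. First apply Proposition~\ref{prop:uniformclosure} to partition $Y$ into finitely many semilinear pieces on each of which $\X$ has uniform closure. So we may assume from the outset that $\X$ has uniform closure given by a flat family $\Z = (Z_b)_{b \in Y}$ whose total flat is cut out by some linear formula $\psi(x,y)$ over $R$.

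The key observation is that, for any finite tuple $\bar{\eta} = (\eta_1(x,y), \dots, \eta_\ell(x,y))$ of linear formulas over $R$, each of which defines (for every fixed $y = b$) a half-flat inside $Z_b$, the property ``$\{\eta_1(\cR,b), \dots, \eta_\ell(\cR,b)\}$ is an essential approximation of $X_b$'' is expressible by a semilinear formula $\sigma_{\bar\eta}(y)$ over $R$. This reduces to checking that the three defining clauses --- essentialness of each half-flat (existence of a witness point $a$ with $X_b \cap C = \eta_j(\cR,b) \cap C$ for some open cube $C$ centered at $a$), exhaustiveness of boundaries among essential half-flats of $X_b$, and pairwise distinctness of the boundaries $\bd(\eta_j(\cR,b))$ --- are each uniformly definable. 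This in turn follows from the definability and elementary-extension-preservation of local dimension in $\cR$, exactly as in the construction of $\rho_\psi(y)$ inside the proof of Proposition~\ref{prop:uniformclosure}.

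Now fix a $(|R|+\aleph_0)$-saturated elementary extension $\cR'$ of $\cR$. Given any $b' \in (R')^n$, the ``moreover'' clause of the preceding proposition (existence of an essential approximation over any parameter set defining $X$) applied to $X_{b'}$ inside $\cR'$ produces an essential approximation of $X_{b'}$ by half-flats defined over $R \cup \{b'_1, \dots, b'_n\}$. Absorbing the parameters $b'_i$ into the free variables $y$ yields a tuple $\bar\eta$ of linear formulas over $R$ with $\cR' \models \sigma_{\bar\eta}(b')$. Thus the family $\{\sigma_{\bar\eta}(\cR')\}$ covers $(R')^n$ as $\bar\eta$ ranges over tuples of half-flat formulas over $R$. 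There are only $|R|+\aleph_0$-many such $\bar\eta$, so saturation yields finitely many $\bar\eta_1, \dots, \bar\eta_K$ whose $\sigma_{\bar\eta_i}$ already cover. Transferring back via $\cR \preceq \cR'$ and intersecting with $Y$ partitions $Y$ into finitely many semilinear pieces, on each of which the chosen $\bar\eta_i$ assembles into a uniform collection $\Upsilon$ of half-flat families satisfying clause (a) of the definition of uniform essential approximation.

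Finally, to force clause (b), on each such piece we further subdivide by the finite tuple of integers $\bigl(\dim(P_{J,b} \cap X_b),\, \dim(P_{J,b} \setminus X_b)\bigr)_{J \subseteq [\ell]}$. Each entry lies in $\{-1, 0, \dots, d\}$; by the definability of dimension the locus of $b$ realizing any fixed tuple of values is semilinear, and only finitely many such tuples can occur, giving the required refinement. The principal technical burden, as in Proposition~\ref{prop:uniformclosure}, lies entirely in step two: verifying that essentialness of a half-flat and the associated boundary and dimension conditions are uniformly expressible by semilinear formulas in $y$. Once this is granted, the saturation/compactness argument goes through by direct analogy, and the dimension-based refinement in the last step is routine.
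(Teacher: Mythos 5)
Your proposal is correct and follows essentially the same route as the paper: encode ``these half-flat formulas over $R$ give an essential approximation of $X_b$'' by a semilinear formula in $y$ via definability of (local) dimension, use the existence of an essential approximation over $R\cup\{b'\}$ for each parameter $b'$ in a $(|R|+\aleph_0)$-saturated elementary extension to get a definable cover, extract a finite subcover by saturation, and transfer back to partition $Y$. Your final refinement by the dimension tuple $\bigl(\dim(P_{J,b}\cap X_b),\dim(P_{J,b}\setminus X_b)\bigr)_{J\subseteq[\ell]}$ to secure clause (b) of the definition is a detail the paper's proof leaves implicit, so if anything your write-up is slightly more complete.
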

\begin{proof}
Let  $\phi(x,y)$ be a semilinear formula over $R$ and let $\psi(x,y)$ be the system of linear equations over $R$ such that, for every $b \in Y$, we have
$$ X_b = \phi(\cR,b) \quad \text{and} \quad  Z_b = \psi(\cR,b). $$

For the next part of the proof, assume $\Psi=  (  \psi_{1}(x,y), \ldots, \psi_\ell(x,y))$ is a  finite tuple of semilinear formula over $R$ with variables in $(x,y)$ such that, for $i \in [\ell]$,
$$ \psi_i(x,y) = \psi(x,y) \wedge \eta_i(x,y)$$
where $\eta_i(x,y)$ is a linear inequality over $R$ with variables in $(x,y)$ and $H_{i,b} := \psi_i(\cR, b)$ is a half-flat of $Z_b$. For $\varepsilon \in R$ and $a =(a_1, \ldots, a_m)\in R^m$, let $C_\varepsilon =\{ a' =(a'_1, \ldots a'_m \in R^m \mid  a_i-\varepsilon < a'_i < a_i+\varepsilon\} $.
Note that, give $X \subseteq R^m$ and $a \in R^m$, we have $a \in \esb(X)$ if and only if for all $\varepsilon \in R$, 
$$ \dim( X \cap C_{a, \varepsilon}) =\dim ( (Z\setminus X) \cap C_{a, \varepsilon} = \dim Z.   $$
Hence, whether half-flats $H_1, \ldots, H_\ell$ form an essential approximation of $X$ can also be characterized in term of dimension. 
In addition, dimension is definable and preserved under elementary extension (see Facts~\ref{fact:dimension}-(c) and \ref{fact:definabilitydimension}). Hence, we obtain a semilinear formula  $\rho_\Psi(y)$ over $R$ such that, for every elementary extension $\cR'=(R', \ldots)$ of $\cR$ and $b' \in (R')^n$, we have $\cR' \models \rho_\Psi(b')$ if and only if
$$  \{\psi_1(\cR',b'), \ldots, \psi_\ell(\cR',b')\} \text{ form an essential approximation of } \phi(\cR', b').$$

Choose finitely many system of linear equations $\psi_{b', 1}(x,y), \ldots, \psi_{b', \ell'}(x,y)$ such that   
$$ \fcl_{\cR'}(\phi(\cR', b'))= \bigcup_{i \in [\ell'] }\psi_{b', i}(\cR',b'). $$
Therefore, as $\Psi$ varies, $\rho_\Psi(\cR')$ form a cover of $(R')^n$. 
Note that there are only $(|R|+\aleph_0)$-many choices $\Psi$ as described. Hence, by the assumption that $\cR'$ is $(|R|+\aleph_0)$-saturated, we can choose $\Psi_1, \ldots, \Psi_k$. such that $(R')^n = \bigcup_{i \in [k]} \rho_{\Psi_i}(\cR')$. Recall that each $\rho_{\Psi_i}(y)$ is a formula over $R$ and $\cR$ is an elementary substructure of $\cR'$, we also have
$$R^n = \bigcup_{i \in [k]} \rho_{\Psi_i}(\cR)$$
Finally, set $Y_1= Y \cap \theta_{C_1}(R)$ and $Y_i = (Y \cap \theta_{C_1}(R)) \setminus \bigcup_{j<i} Y_j$ for $i \in [k] \setminus \{1\}$. It is easily verified that $Y_1, \ldots, Y_k$ form the desired partition.
\end{proof}

\subsection{Uniform families}
\label{sec:uniform}

Let $\X = (X_b)_{b \in Y}$ be a semilinear family in $\R^m$ parameterized by $Y \subseteq \R^n$. We define a uniform family inductively as follows.
\begin{defn}[Uniform family]
We say $\X$ is \emph{uniform} if one of the following holds:
\begin{enumerate}[label = (\alph*)]
\item $X_b = \emptyset$ for every $b \in Y$ (i.e., $\X$ is the constant family whose members are the empty set) --- in other words, $\dim(X_b) = -\infty$ for every $b \in Y$;
\item $\X$ has uniform closure given by the flat family $\Z = (Z_{b})_{b\in Y}$ and uniform essential approximation $\Upsilon = \{ \h_{j} : j \in [\ell] \}$ with $\h_{j} = (H_{j,b})_{b\in Y}$.   Then, with $\E_b := \{ H_{j,b} : j \in [\ell] \}$ being the essential approximation of $X_b$, and $\P_b = \P(\E_b) = \{P_{J, b} : J \subseteq [\ell] \}$ the partition of $Z_b$ induced by $\E_b$, one of the following holds for each $J \subseteq [\ell]$:
\begin{enumerate}[label = (\roman*)]
    \item $\dim(P_{J,b} \cap X_b) < \dim(X_b)$ for all $b \in Y$, and the family $(P_{J,b} \cap X_b)_{b \in Y}$ is uniform;
    \item $\dim(P_{J,b} \setminus X_b ) < \dim(X_b)$ for all $b\in Y$, and the family $(P_{J,b} \setminus X_b)_{b \in Y}$ is uniform.
\end{enumerate}
Note that if $\Upsilon = \emptyset$, then $\X$ is uniform.
\item $\X$ has uniform closure given by $\{\Z_1, \dots, \Z_{\ell}\}$ with $\ell > 1$ and $\Z_i = (Z_{i,b})_{b\in Y}$. Then, for each $i \in [\ell]$, the family $(X_b \cap Z_{i,b})_{b\in Y}$ is uniform.
\end{enumerate}
\end{defn}

\begin{prop}
\label{prop:uniformization}
There is a $k \in \NN$ and semilinear sets $Y_1, \ldots, Y_k$ that partition $Y$ such that, for every $i \in [k]$, the family $\X \upharpoonright Y_i$ is uniform. 
\end{prop}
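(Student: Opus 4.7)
My plan is to proceed by induction on the pair $(d, \ell)$ ordered lexicographically, where $d := \dim(X_b)$ (which I will first arrange to be constant on $Y$) and $\ell$ is the number of flat families appearing in the uniform closure of $\X$ (which I will arrange to exist). Since the function $b \mapsto \dim(X_b)$ is definable in $\cR$ and takes only finitely many values, I can first partition $Y$ into semilinear pieces on which $\dim(X_b)$ is constant; on a piece where $d = -\infty$ we have $X_b = \emptyset$ identically and $\X$ is uniform by clause~(a). I will then apply Proposition~\ref{prop:uniformclosure} to further partition each piece so that $\X$ acquires a well-defined uniform closure $\Xi = \{\Z_1, \ldots, \Z_\ell\}$ with $\Z_i = (Z_{i,b})_{b \in Y}$.

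The case $\ell > 1$ will be handled by reducing to multiple instances of the case $\ell = 1$. For each $i \in [\ell]$, set $\mathcal{Y}_i := (X_b \cap Z_{i,b})_{b \in Y}$; each $\mathcal{Y}_i$ is semilinear, and by Lemma~\ref{lem:affineclosure} applied fiberwise it has uniform closure equal to the single flat family $\{\Z_i\}$. Thus each $\mathcal{Y}_i$ has lexicographic complexity $(d_i, 1) < (d, \ell)$, so the inductive hypothesis yields a semilinear partition of $Y$ on which $\mathcal{Y}_i$ is uniform. Taking the common refinement over $i \in [\ell]$ produces a single semilinear partition on which all the $\mathcal{Y}_i$ are simultaneously uniform, which realizes clause~(c) for $\X$.

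For the case $\ell = 1$, $\X$ has uniform closure $\{\Z\}$. I will apply Proposition~\ref{prop:uniformapproximation} to further refine $Y$ so that $\X$ acquires a uniform essential approximation $\Upsilon = \{\h_1, \ldots, \h_k\}$. By the definition of uniform essential approximation, the dimensions $\dim(P_{J,b} \cap X_b)$ and $\dim(P_{J,b} \setminus X_b)$ are independent of $b$, and Lemma~\ref{lem:essentialapproxdim}\ref{itm:essentialapproxdim2} further guarantees that at least one of them is $\leq d - 1$ for every $J \subseteq [k]$. For each $J$, I will select the semilinear sub-family (either $(P_{J,b} \cap X_b)_{b \in Y}$ or $(P_{J,b} \setminus X_b)_{b \in Y}$) whose constant dimension is strictly less than $d$, and invoke the inductive hypothesis to obtain a semilinear partition of $Y$ on which it becomes uniform. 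The common refinement of these $2^k$ partitions then gives a partition on which $\X$ satisfies clause~(b).

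The main obstacle is verifying that the recursion terminates and that semilinearity is preserved throughout. Well-foundedness follows from the lexicographic induction on $(d, \ell)$: the reduction from $\ell > 1$ to each $\mathcal{Y}_i$ strictly decreases the second component while not increasing the first, and the passage from clause~(b) to its smaller-dimensional sub-families strictly decreases $d$. Semilinearity is preserved because partitioning by a definable condition, intersecting with a semilinear flat family, passing to a cell determined by the essential approximation, and taking common refinements of finitely many semilinear partitions all produce semilinear sets.
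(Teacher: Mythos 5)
Your proof is correct and follows essentially the same approach as the paper: preprocess via Proposition~\ref{prop:uniformclosure} and Proposition~\ref{prop:uniformapproximation}, use Lemma~\ref{lem:essentialapproxdim}\ref{itm:essentialapproxdim2} together with the definability of dimension (Facts~\ref{fact:dimension} and~\ref{fact:definabilitydimension}) to split the parameter space, and induct on the dimension of the members. The one superficial difference is organizational: the paper uses induction on $\dim(X_b)$ alone and handles the multi-flat uniform closure case by reducing to the single-flat case within one inductive layer (so no separate $\ell$-parameter is needed in the induction), whereas you formalize this reduction as a lexicographic induction on $(d,\ell)$. Both are well-founded for the same reason — the essential-approximation step strictly drops dimension, and the passage from $\ell>1$ to $\ell=1$ only happens once per dimension level — so the distinction is cosmetic, not substantive.
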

\begin{proof}
If, for all $b\in Y$, we have $X_b =\emptyset$, (equivalently, $\dim(X_b) =-\infty$), then the statement is immediate. Toward using induction, assume we have proven the statement for all families $\X$ such that $\dim(X_b)\leq d$ for all $b\in Y$ with $d \in \NN \cup\{-\infty\}$. 

Now consider the case where $\X$ is a family of semilinear sets of dimension $\leq d+1$, $\X$ has uniform closure a single flat family $\Z = (Z_{i,b})_{b\in Y}$, and $X$ has a uniform essential approximation.  Note that all members of $X$ has the same dimension as a set has the same dimension as its flat closure [ref], and  all members of a flat family has the same dimension by [ref]. We can assume that $\dim(X_b) = d+1$ as otherwise the desired conclusion is immediate from the induction hypothesis. 
 Let $\Upsilon = \{ \h_{j} : j \in [\ell] \}$ be the uniform essential approximation of $\X$ with $\h_{j} = (H_{j,b})_{b\in Y}$.
Let $P_{J,b}$ be as stated. By Lemma~\ref{lem:essentialapproxdim}, for $J \in [\ell]$ and each $b \in Y$, 
$$\text{either} \quad \dim(P_{J,b}\cap X_b)< \dim(X_b) \quad \text{or} \quad \dim(P_{J,b}\setminus X_b)< \dim(X_b).$$
Using Fact~\ref{fact:dimension}, for given $J$, the sets $\{ b\in Y : \dim(P_{J,b}\cap X_b)< \dim(X_b) \}$ and $\{ b\in Y : \dim(P_{J,b}\setminus X_b)< \dim(X_b) \}$ are both definable. As there are at most $2^{\ell}$-many $J \subseteq [\ell]$, we can partition $Y$ into finitely-many semilinear sets $Y_1, \ldots, Y_{k'}$ such that for each $i \in [k']$, for all $J\subseteq [\ell]$, either $\dim(P_{J,b}\cap X_b)< \dim(X_b)$ for all $b\in Y_i$ or $\dim(P_{J,b}\setminus X_b)< \dim(X_b)$ for each $b\in Y_i$. For each $i \in [k']$, using the induction hypothesis, we can partition $Y_i$ further into finitely many semi-linear sets $Y_{i,1}, \ldots, Y_{i, m_i}$ such that for each $h \in [m_i]$, either (i) or (ii) hold with $Y$ replaced by $Y_{i,h}$. Thus, we arrived as the desired conclusion.

By using Proposition~\ref{prop:uniformapproximation} and partitioning the parameter set $Y$ into finitely many semilinear set if necessary, we can handle the case where $\X$ is a family of semilinear sets of dimension $d$, $\X$ has uniform closure a single flat family $\Z = (Z_{i,b})_{b\in Y}$. 

Next consider the case where $\X$ has uniform closure where $X$ has uniform closure $\Xi = \{\Z_1, \dots, \Z_{k_1}\}$ with $\Z_i = (Z_{i,b})_{b\in Y}$ for $i \in k_1$. Assuming we can prove the statement of the theorem for each $( X_b \cap Z_i)_{b\in Y}$ for each $i \in k_1$, the partition of $Y$ refining the resulting partition for each $( X_b \cap Z_i)_{b\in Y}$ yields the desired conclusion for $\X$.

To finish the inductive argument, it remains to consider the case where $\X = (X_b)_{b\in Y}$ is an arbitrary semilinear family with $\dim(X_b) \leq d+1$ for all $b\in Y$. This is handled by partitioning $Y$ into finitely many semilinear set such that the restricted families to these sets has uniform closure. 
\end{proof}

\section{Decomposition and deconstruction} \label{sec: Decomdecon}

Throughout this section, $\X = (X_b)_{b \in Y}$ is a semilinear family in the composite space $V$ indexed by $I$. Moreover, we suppose that the parameter space $Y$ is a semilinear open cell and that, for every $i \in I$, the family $\X[i]$ is uniform.

\subsection{Decomposition}
\label{sec:decomposition}

Throughout this section, fix $i_0 \in I$ and let $\{\Z_{j}: j \in [\ell] \}$ with $\Z_{j} = (Z_{j, b})_{b \in Y}$ be the uniform closure of $\X[i_0]$. Suppose, without loss of generality, that, for each $b \in Y$,
$$ \dim(Z_{1, b}) \geq \dim(Z_{2, b}) \geq \ldots \geq \dim(Z_{\ell, b}). $$
Let $\ell' \in [\ell - 1]$ be the smallest integer such that, for each $b \in Y$, $\dim(Z_{\ell'+1, b}) < \dim(Z_{\ell', b})$. If no such $\ell'$ exists, then let $\ell' = \ell$. 

Given a family of semilinear sets, the following definition provides a new family in a suitable composite space by ``picking out the components of highest dimension''. Take the example $(X_b)_{b\in \RR^2}$ where $X_b$ is the union of the vertical and horizontal lines passing through $b$ and the four points with distance 1 from $b$ in the diagonal directions. Its decomposition will be $(V_b \sqcup H_b\sqcup Q_b)_{b\in \RR^2}$ in the composite space $\RR^2 \sqcup \RR^2 \sqcup \RR^2$ where $V_b$ and $H_b$ are the vertical and horizontal lines passing through $b$, and $Q_b =  X_b\setminus (H_b \cup V_b)$ is the set of four points. 

\begin{defn}[Decomposition]
\label{def:decomposition}
We say that $(I', V', \X')$ with $\X' =(X'_b)_{b\in Y}$ is a \emph{decomposition of $(I, V, \X)$ at index $i_0$} if the following hold:
\begin{enumerate}[label = (\alph*)]
    \item $I' = (I \setminus \{i_0\}) \bigcup \left(\{i_0\} \times [\ell' + 1]\right)$;
    \item $V'[I \setminus \{i_0\}] = V[I \setminus \{i_0\}]$ and $\X'[I \setminus \{i_0\}] = \X[I \setminus \{i_0\}]$;
    \item $V'[(i_0,j)] \heq V[i_0]$ for every $j \in [\ell' + 1]$;
    \item $X'_b[(i_0,j)] \heq  X_b[i_0] \cap Z_{j, b}$ for every  $j \in [\ell]$ and $b \in Y$;
    \item $X'_b[(i_0,\ell' + 1)] \heq  X_b[i_0] \setminus \left(\bigcup_{j \in [\ell']} Z_{j, b}\right)$ for every $b \in Y$.
\end{enumerate}
\end{defn}
\begin{remark}
If $\ell' = \ell$, then $\X'[(i_0, \ell' + 1)]$ is the constant family whose members are the empty set. 
\end{remark}

The main point of this section is the following.
\begin{prop} \label{prop:decomposition}
Let $(I', V', \X')$ be a decomposition of $(I, V, \X)$ at index $i_0\in I$. Then the following hold:
\begin{enumerate}[label = (\alph*)]
\item\label{itm:decompositiona} there is an $r_1 \in \NN$ such that $\pi_\X(t) \leq \pi_{\X'}(r_1t)$ for every $t \geq 1$;
\item\label{itm:decompositionb} there is an $r_2 \in \NN$ and a semilinear open cell $U \subseteq Y$ such that $\pi_{\X'\upharpoonright U}(t) \leq \pi_{\X\upharpoonright U}(r_2 t)$ for every $t \geq 1$.
\end{enumerate}
\end{prop}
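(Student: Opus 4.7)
Given $A \subseteq V$ with $|A|=t$, I will define $A' \subseteq V'$ by $A'[i] := A[i]$ for $i \neq i_0$ and $A'[(i_0, j)] \heq A[i_0]$ for every $j \in [\ell'+1]$, so that $|A'| \leq (\ell'+1)t$ and $r_1 := \ell'+1$ suffices. To check that distinct traces on $A$ remain distinct on $A'$, I will consider $b_1, b_2 \in Y$ with $X_{b_1} \cap A \neq X_{b_2} \cap A$. If they differ on some $A[i]$ with $i \neq i_0$, the conclusion is immediate; otherwise, pick a witness $a \in A[i_0] \cap (X_{b_1}[i_0] \setminus X_{b_2}[i_0])$. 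If $a \in Z_{j, b_1}$ for some $j \in [\ell']$, then $a$ witnesses the disagreement at the index $(i_0, j)$; otherwise $a \in X_{b_1}[i_0] \setminus \bigcup_{j \in [\ell']} Z_{j, b_1}$ and witnesses the disagreement at $(i_0, \ell'+1)$.

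\textbf{Part~(b).} I will follow the window strategy from Section~\ref{sec:proofoverview1}. First, pick a generic $b^* \in Y$ and choose, for each $j \in [\ell']$, a point $z_j^* \in Z_{j, b^*}$ outside $\bigcup_{j' \in [\ell] \setminus \{j\}} Z_{j', b^*}$, together with a point $z^*_{\ell'+1} \in V[i_0]$ outside every $Z_{j, b^*}$ with $j \in [\ell']$. By semilinearity and continuous dependence of the flat families on $b$, I will select a semilinear open cell $U \ni b^*$ and open semilinear windows $W_j \subseteq V[i_0]$ around each $z_j^*$ such that, for every $b \in U$ and $j \in [\ell']$, $W_j \cap Z_{j', b} = \emptyset$ for $j' \in [\ell] \setminus \{j\}$, and $W_{\ell'+1} \cap Z_{j, b} = \emptyset$ for $j \in [\ell']$. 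The upshot: since $\fcl(X_b[i_0]) = \bigcup_{j \in [\ell]} Z_{j, b}$ by uniform closure, any $a \in W_j$ (for $j \in [\ell']$) with $a \in X_b[i_0]$ lies automatically in $X_b[i_0] \cap Z_{j, b} = X'_b[(i_0, j)]$ (under $V'[(i_0, j)] \heq V[i_0]$), and similarly any $a \in W_{\ell'+1}$ with $a \in X_b[i_0]$ lies in $X'_b[(i_0, \ell'+1)]$.

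Next, for each $j \in [\ell'+1]$ I will define a semilinear map $\phi_j : V'[(i_0, j)] \to W_j$ preserving trace information over $U$: $a' \in X'_b[(i_0, j)]$ iff $\phi_j(a') \in X_b[i_0]$ for every $b \in U$. For $j \in [\ell']$, $\phi_j$ is a translation along the direction space $D_j$ of $\Z_j$---which is independent of $b$ because fibers of a flat family are parallel---by an amount, depending on $A'$, large enough to carry the image into $W_j$; points $a'$ outside $\bigcup_{b \in U} Z_{j, b}$ have trivial trace and can be collapsed to a single dummy point in $W_j$ lying outside $X_b[i_0]$ for all $b \in U$. Setting $A[i] := A'[i]$ for $i \neq i_0$ and $A[i_0] := \bigcup_{j \in [\ell'+1]} \phi_j(A'[(i_0, j)])$ yields $|A| \leq (\ell'+1)|A'|$, and the window property combined with $D_j$-invariance gives $|(\X' \upharpoonright U) \cap A'| \leq |(\X \upharpoonright U) \cap A|$. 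The main obstacle lies in arranging $\phi_j$ so that its image interacts correctly not merely with the flat $Z_{j, b}$ but with the (generally proper) semilinear subset $X_b[i_0] \cap Z_{j, b}$ inside it; this is where the uniformity hypotheses on $\X[i_0]$---in particular the uniform essential approximation, which presents $X_b[i_0] \cap Z_{j, b}$ as a fixed Boolean combination of half-flats whose bounding flats move linearly with $b$---let us reduce the verification to a finite case analysis on boundary crossings, which a suitably generic choice of translation vector avoids.
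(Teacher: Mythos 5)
Your argument is correct and is essentially the paper's, unpacked by hand: the paper observes that $X_b[i_0] \heq \bigcup_{j\in[\ell'+1]}X'_b[(i_0,j)]$ and invokes Lemma~\ref{lem:Booleancombination} (Boolean extensions) together with Lemma~\ref{lem:vcsubfamily}\ref{itm:vcsubfamily}, whereas you verify the trace--injectivity directly. Either way the constant $r_1 = \ell'+1$ comes out.

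\textbf{Part (b).} This is where the proposal has a genuine gap. Your plan is to define, for each $j$, a single translation $\phi_j$ along the direction space $D_j$ of $\Z_j$, landing inside a window $W_j$, and to claim that $a' \in X'_b[(i_0,j)]$ iff $\phi_j(a')\in X_b[i_0]$. That ``iff'' fails, and not for a reason that a generic choice of translation vector can repair. The window property you invoke says that for $b\in U$, $W_j\cap X_b[i_0]=W_j\cap Z_{j,b}$ (in fact you need the stronger guarantee of Lemma~\ref{lem:familyseparation1} here, not just that $W_j$ avoids the other $Z_{j',b}$). Since a translation by $v\in D_j$ preserves each fiber $Z_{j,b}$, one then gets $\phi_j(a')\in X_b[i_0]$ if and only if $a'\in Z_{j,b}$---i.e.\ the translated point tests membership in the \emph{flat} $Z_{j,b}$, not in the proper semilinear subset $X_b[i_0]\cap Z_{j,b}$. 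Concretely: take two parameters $b_1\neq b_2$ in $U$ with $Z_{j,b_1}=Z_{j,b_2}$ (this happens on a positive-dimensional set of pairs) but $X_{b_1}[i_0]\cap Z_{j,b_1}\neq X_{b_2}[i_0]\cap Z_{j,b_2}$. Any witness $a'$ distinguishing $X'_{b_1}[(i_0,j)]$ from $X'_{b_2}[(i_0,j)]$ lies on the common fiber, and after translating into $W_j$ the image lies in $W_j\cap Z_{j,b_1}=W_j\cap Z_{j,b_2}\subseteq X_{b_1}[i_0]\cap X_{b_2}[i_0]$, so it no longer distinguishes. Separately, a single translation cannot in general carry a set $A'[(i_0,j)]$ of unbounded diameter (and spread over unboundedly many fibers $Z_{j,b}$) into the bounded window $W_j$; some compression/scaling in the spirit of Lemma~\ref{lem:flatfamilyrestiction} or Lemma~\ref{lem:extensionbyclosure} would be needed, and you do not supply one.

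What the paper does instead is precisely designed to sidestep these issues: it builds the intermediate family $\X_{\ell'}$ by (i) adding a constant ``window'' family $\C_k$ (no change to $\pi$); (ii) adding the Boolean extension $(X_b[i_0]\cap C_k)_b$ (costs a multiplicative constant); (iii) replacing that family---whose flat closure is $\Z_k$ because inside $C_k$ it literally equals $Z_{k,b}\cap C_k$---by the full flat family $\Z_k$ via Lemma~\ref{lem:extensionbyclosure}; and (iv) dropping the constant index. Finally $\X'$ is read off as a Boolean extension of $\X_{\ell'}$. The Boolean-extension step at the end is exactly what recovers the distinction between $b_1$ and $b_2$ in the example above: you keep the original $\X[i_0]$ at a separate index and combine it with $\Z_j$, rather than asking a single translated point set to do both jobs. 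Your last paragraph gestures at the uniform essential approximation to control boundary crossings, but no argument is given, and the structural obstruction (the translated point detects the flat, not the subset) is independent of how generic the translation vector is.
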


Before giving the proof, we collect some lemmas that help simplify the presentation.

\begin{lem} \label{lem:extensionbyclosure}
Suppose $\X$ is a family in the composite space $V[I]$. Let $i_0 \in I$ be such that such that $\X[i_0]$ has uniform closure given by the flat family $\Z$. Let $\X'[I]$ be the family such that $\X'[I\setminus\{i_0\}] = \X[I\setminus\{i_0\}]$ and $\X'[i_0] = \Z$. Then $\pi_{\X'}(t) \leq \pi_{\X}(t)$ for every $t \geq 1$.
\end{lem}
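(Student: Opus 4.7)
The plan is to show that for every finite $A \subseteq V$ there exists $A^* \subseteq V$ with $|A^*|=|A|$ and $|\X \cap A^*| \geq |\X' \cap A|$; taking the supremum over $A$ then yields $\pi_{\X'}(t) \leq \pi_{\X}(t)$.

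First, I would set $A^*[j] := A[j]$ for each $j \neq i_0$, keeping the non-$i_0$ components unchanged. I would then construct $A^*[i_0]$ by perturbing the points of $A[i_0]$ one at a time, pushing each into the semilinear family $\X[i_0]$ while preserving its pattern of memberships in the flats $Z_b$. Concretely, for each $a \in A[i_0]$, let $F_a := \bigcap\{Z_b : b \in Y,\ a \in Z_b\}$; this is a flat containing $a$. I would seek $a^* \in F_a$ satisfying
\begin{enumerate}[label=(\roman*)]
\item $a^* \in X_b[i_0]$ for every $b \in Y$ with $a \in Z_b$, and
\item $a^* \notin Z_b$ for every $b \in Y$ with $a \notin Z_b$.
\end{enumerate}
Condition (ii) holds once $a^*$ is chosen generically near $a$ inside $F_a$, since each offending $Z_b$ is a proper flat not containing $a$. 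With (i) and (ii) in hand, the map $a \mapsto a^*$ induces, for every $b \in Y$, a bijection between $Z_b \cap A[i_0]$ and $X_b[i_0] \cap A^*[i_0]$; combined with the equality of the other components, this implies that distinct $\X'$-traces on $A$ are witnessed by distinct $\X$-traces on $A^*$, giving $|\X \cap A^*| \geq |\X' \cap A|$.

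The hard part is establishing (i): the existence of a single $a^*$ lying simultaneously in every relevant $X_b[i_0]$. Since $A$ is finite, only finitely many $\X'$-traces on $A$ arise, so one may reduce to a finite set of representative parameters, turning (i) into a finite intersection of semilinear subsets of $F_a$. By the uniform closure hypothesis and Lemma~\ref{lem:affineclosure}, each $X_b[i_0]$ has the same dimension as its flat closure $Z_b$, so it contains a top-dimensional open cell of $Z_b$. A transversality/genericity argument should then show that these cells meet $F_a$ in subsets of full dimension in $F_a$ and hence have non-empty common refinement near $a$. The delicate situation is when $F_a$ has strictly smaller dimension than some relevant $Z_b$---for example, when several of the relevant flats meet only at the point $a$---in which case a more careful choice of representatives within each equivalence class, or a supplementary direct trace-comparison argument handling the pairs of parameters already distinguished by $\X$ itself, seems necessary.
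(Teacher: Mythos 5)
Your overall strategy (construct $A^*$ so that distinct $\X'$-traces on $A$ become distinct $\X$-traces on $A^*$) matches the paper's, but there are two genuine gaps. First, condition (i) is too strong: you require $a^*$ to lie in $X_b[i_0]$ for \emph{every} $b$ with $a \in Z_b$, but this intersection can be empty. For instance, with $m_{i_0}=1$, $Y=R$, $X_b=(b,b+1) \subseteq R$, and $Z_b \equiv R$, we have $\bigcap_{b}X_b = \emptyset$, so no $a^*$ exists, even though the lemma holds trivially in this example. What is actually needed is one representative parameter per $\X'$-trace, not all of them. Second, a perturbation ``generically near $a$'' will not in general land $a^*$ in $X_b[i_0]$: the full-dimensional part of $X_b[i_0]$ may be nowhere near $a$, so one must actually \emph{translate and rescale} the whole configuration $A[i_0]\cap Z_b$ to fit inside $X_b[i_0]$, which is what the paper means by ``appropriately scaling and translating the points of $A$.''

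Both gaps close once you record the structural fact underlying the paper's short proof: the fibers $Z_b$ of a flat family are pairwise \emph{parallel}, being slices of the single total flat $Z \subseteq R^{m_{i_0}+n}$ over different parameter values. Hence two fibers through a common point coincide, and the nonempty distinct traces $Z_b \cap A[i_0]$ are pairwise disjoint. Matching each such trace with $X_b \cap A^*$ for a single representative $b$ then automatically produces pairwise distinct $\X$-traces, and one may rescale each piece independently inside its own parallel flat without interference. This observation also disposes of the ``delicate situation'' you flag at the end: parallel flats sharing the point $a$ are equal, so $F_a$ is simply the unique fiber containing $a$ (or all of $V[i_0]$ if $a$ lies in none) and can never have strictly lower dimension than a relevant $Z_b$.
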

\begin{proof}
Let $A \subset V[I]$ be a set of $t$ points. We will construct $A'\subseteq V$ with $|A'|=t$ such that $|\X' \cap A| \leq |\X \cap A'|$, which implies the assertion. Since $\X[I \setminus \{i_0\}] = \X'[I \setminus \{i_0\}]$, we may assume that $A \subseteq V[i_0]$.

Note that, for any $a, b \in Y$ such that $Z_a \cap A \neq Z_b \cap A$, the flats $Z_a$ and $Z_b$ are parallel and, hence, $(Z_a \cap A) \cap (Z_b \cap A) = \emptyset$.
Furthermore, since $Z_b = \fcl(X_b)$ for each $b \in Y$, by appropriately scaling and translating the points of $A$, we may assume, that for each $A' \in \X' \cap A$, there exists $b \in Y$ such that $\X_b \cap A = A'$. This implies $|\X' \cap A| \leq |\X \cap A'|$, finishing the proof.
\end{proof}

Let $X \subseteq R^m$ and $Z \subseteq R^m$ be a flat. We denote by $\int_Z(X)$ the set of points $a \in X \cap Z$ such that there is an open cube $C$ centered $a$ with $C \cap Z \subseteq X \cap Z$.
\begin{lem} \label{lem: singleseparation1}
Suppose $X \subseteq R^m$ is semilinear and that $ Z \subseteq \fcl(X)$ is a flat with $\dim(Z) = \dim (X)$. For $k \in \NN$, let $W_1, \ldots, W_k$ be flats in $R^m$ such that $\dim (W_i \cap Z) < \dim(Z)$ for every $i \in [k]$.

Then there is an open cube $C \subseteq R^m$ such that 
$$ C \cap X  = C \cap Z \neq \emptyset\quad\text{
and }\quad C \cap W_i = \emptyset  \text{ for every } i \in [k].$$
\end{lem}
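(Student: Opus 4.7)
The plan is to locate a point $a$ in the relative interior $\int_Z(X)$ that avoids all the low-dimensional ``bad'' flats, and then to take a sufficiently small open cube centered at $a$. I would begin by decomposing $\fcl(X)$ into minimal form $\fcl(X) = F_1 \cup \cdots \cup F_s$ (no $F_i$ contained in another). Since $Z \subseteq \bigcup_i F_i$ and a flat cannot be covered by its proper subflats $Z \cap F_i$, some index $i$ satisfies $Z \subseteq F_i$; combined with $\dim F_i \leq \dim \fcl(X) = \dim Z$, this forces $Z = F_i$. Relabeling, I would write $\fcl(X) = Z \cup Z_1' \cup \cdots \cup Z_p'$ with each $Z_j' \neq Z$, and note that $\dim(Z \cap Z_j') < \dim Z$ for every $j$ (either $\dim Z_j' < \dim Z$, or $Z_j'$ and $Z$ are distinct flats of the same dimension, in which case they meet in a proper subflat of $Z$).

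Combining this with the hypothesis that $\dim(Z \cap W_i) < \dim Z$ for every $i \in [k]$, the set
$$ E := \bigcup_{j=1}^p (Z \cap Z_j') \cup \bigcup_{i=1}^k (Z \cap W_i) \subseteq Z $$
is a semilinear subset of $Z$ of dimension strictly less than $\dim Z$. By Lemma~\ref{lem:affineclosure}, $\dim(X \cap Z) = \dim Z$, and since $X \cap Z$ is semilinear, the relative interior $\int_Z(X)$ is itself a semilinear subset of $Z$ of dimension $\dim Z$. I would therefore pick $a \in \int_Z(X) \setminus E$, which is non-empty by the dimension count.

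From here the conclusion is immediate. By the defining property of $\int_Z$, there is an open cube $C_0$ centered at $a$ with $C_0 \cap Z \subseteq X \cap Z$. Since $a$ lies outside each of the finitely many closed flats $Z_j'$ and $W_i$, I can shrink $C_0$ to an open cube $C$ disjoint from all of them, while preserving $C \cap Z \subseteq X$. Finally, $X \setminus Z \subseteq \bigcup_j Z_j'$ (from $X \subseteq \fcl(X)$) together with $C \cap \bigcup_j Z_j' = \emptyset$ yields $C \cap X \subseteq Z$, so $C \cap X = C \cap Z$, which contains $a$ and is therefore non-empty, while $C \cap W_i = \emptyset$ for each $i$ by construction. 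The only place requiring any real care is the dimension count guaranteeing $\int_Z(X) \setminus E \neq \emptyset$; I do not anticipate any further obstacle.
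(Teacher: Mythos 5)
Your proof is correct and follows essentially the same strategy as the paper: locate a point in the relative interior of $X$ inside $Z$ that misses the finitely many ``bad'' lower-dimensional traces on $Z$, then shrink an open cube around it. The only presentational difference is that you keep the other flat components $Z'_j$ of $\fcl(X)$ separate, whereas the paper absorbs them into the list of $W_i$'s (using that $\dim(Z \cap Z'_j) < \dim Z$); your version is, if anything, a little more careful with the dimension bookkeeping.
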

\begin{proof}
We can assume the flats contained in $\fcl(X)$ other than $Z$ are among the $W_i$'s. Note that $X \cap(W_1 \cup \ldots \cup W_k)$ has dimension less than $\dim(Z)$ which implies, by Lemma~\ref{lem:affineclosure}, that $X \setminus (W_1 \cup \ldots \cup W_k)$ is dense (with respect to the flat topology) in $Z$. In particular, we have $\dim(\int_Z(X \setminus (W_1 \cup \ldots \cup W_k))) = \dim(X \setminus (W_1 \cup \ldots \cup W_k))$.
Let $a\in \int_{Z}(X \setminus (W_1 \cup \ldots \cup W_k))$ and let $C_0$ be an open cube centered at $a$ such that $C_0 \cap (X \setminus (W_1 \cup \ldots \cup W_k))$ has the same dimension as $C_0 \cap Z $. For every $i \in [k]$, let $C_i$ be an open cube centered at $a$ such that $C_i \cap W_i = \emptyset$. 

Set $C = \bigcap_{i=0}^k C_i$. It is straightforward to verify that $C$ satisfies the assertions of the lemma.
\end{proof}

We also require a version of Lemma~\ref{lem: singleseparation1} for families.
\begin{lem}[First Window Lemma] \label{lem:familyseparation1}
Let $Y \subseteq \R^n$ be an open set and $\X = (X_b)_{b \in Y}$ be a semilinear family in $R^m$ with uniform closure. Suppose $\Z = (Z_b)_{b\in Y}$ is a flat family such that, for every $b \in Y$, $Z_b  \subseteq \fcl_\cR(X_b)$ and $\dim (Z_b) = \dim (X_b)$. Let $k \in \NN$ and, for every $i \in [k]$, let $\W_i = (W_{i,b})_{b\in Y}$ be a flat family such that $\dim (W_{i,b} \cap Z_b) < \dim(Z_b)$ for every $b\in Y$.

Then there is an open cube $C \subseteq R^m$ and a semilinear open cell $U \subseteq Y$ such that, for every $b \in U$,
$$ C \cap X_{b}  = C \cap Z_{b} \neq \emptyset \quad\text{ and }\quad C \cap W_{i,b}  = \emptyset \text{ for every } i \in [k].$$
\end{lem}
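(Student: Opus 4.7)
The plan is to encode valid triples $(c, r, b)$ --- where $C(c, r)$ denotes the open cube of radius $r$ centered at $c$ in $R^m$ satisfying the conclusion at parameter $b$ --- as a single semilinear set $E$, and then to extract a uniform cube via a dimension count. As a preliminary reduction, I include among $\W_1, \ldots, \W_k$ the remaining flat families $\Z'_1, \ldots, \Z'_{\ell'}$ appearing in the uniform closure of $\X$ alongside $\Z$. For each $b \in Y$ and each $j$, $Z'_{j,b}$ is a flat distinct from $Z_b$ with $\dim Z'_{j,b} \leq \dim Z_b = \dim X_b$, so $\dim(Z'_{j,b} \cap Z_b) < \dim(Z_b)$, preserving the hypothesis on the $\W_i$. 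After this reduction, $X_b \subseteq Z_b \cup \bigcup_i W_{i,b}$ for every $b \in Y$.

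I next define the semilinear set
$$E := \{(c, r, b) \in R^m \times R_{>0} \times Y : C(c, r) \cap X_b = C(c, r) \cap Z_b \neq \emptyset \text{ and } C(c, r) \cap W_{i, b} = \emptyset \text{ for each } i \in [k]\},$$
and aim to show $\dim E = m + 1 + n$. For each $b \in Y$, Lemma~\ref{lem: singleseparation1} applied to $X_b, Z_b, W_{1,b}, \ldots, W_{k,b}$ yields some $(c_0, r_0) \in E_b$; inspecting its proof, one can additionally arrange $\overline{C(c_0, r_0)} \cap W_{i, b} = \emptyset$ and $\overline{C(c_0, r_0)} \cap Z_b \subseteq \int_{Z_b}(X_b)$ by shrinking $r_0$ slightly. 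Under these strengthened conditions, any sufficiently small perturbation $(c, r)$ of $(c_0, r_0)$ remains in $E_b$: $\overline{C(c, r)}$ still avoids the closed flats $W_{i,b}$, and the inclusion $X_b \subseteq Z_b \cup \bigcup_i W_{i,b}$ together with $C(c,r) \cap W_{i,b} = \emptyset$ forces $C(c, r) \cap X_b \subseteq C(c, r) \cap Z_b$, while the reverse inclusion follows from $C(c, r) \cap Z_b \subseteq \int_{Z_b}(X_b) \subseteq X_b$. Thus $E_b$ has non-empty interior in $R^m \times R_{>0}$ (of dimension $m + 1$), and since $Y$ has dimension $n$, I conclude $\dim E = m + 1 + n$.

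Because $E$ is a semilinear subset of $R^m \times R \times R^n$ of full ambient dimension, a standard consequence of cell decomposition is that $E$ has non-empty interior in the order topology. Picking $(c_0, r_0, b_0) \in \int(E)$ and using that open sets in the product order topology contain open boxes around their points, there is an open box $B_1 \times B_2 \times B_3 \subseteq E$ with $(c_0, r_0, b_0) \in B_1 \times B_2 \times B_3$. Shrinking $B_3$ if necessary so that $B_3 \subseteq Y$, we obtain $\{(c_0, r_0)\} \times B_3 \subseteq E$, so the fixed open cube $C := C(c_0, r_0)$ satisfies the conclusion of the lemma for every $b \in B_3$. Taking $U := B_3$, an open box and hence a semilinear open cell, finishes the argument.

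The main obstacle is verifying that the fibers $E_b$ have non-empty interior, i.e., that Lemma~\ref{lem: singleseparation1} produces a cube that is robust under small perturbations of $(c, r)$. This hinges on the two strengthenings described above combined with the reduction $X_b \subseteq Z_b \cup \bigcup_i W_{i,b}$; once this robustness is established, the dimension argument and the extraction of a fixed cube are routine.
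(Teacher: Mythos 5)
Your proposal is correct, and it takes a genuinely different route from the paper's. The paper's proof lifts everything to the graph space $R^{m+n}$: it forms the total set $X = \{(a,b): a \in X_b,\ b \in Y\}$, the total flat $Z$, and the total flats $W_i$, verifies that the hypotheses of Lemma~\ref{lem: singleseparation1} hold for these (using Fact~\ref{fact:dimension}(c) and Lemma~\ref{lem:affineclosure}), applies that lemma a single time to get one product cube $C \times C' \subseteq R^{m+n}$, and then slices it into fibers, using a dimension count plus Facts~\ref{fact:semilinearfiniteuntion} and~\ref{fact:dimension}(d) to extract the open cell $U \subseteq C'$. Your proof instead applies Lemma~\ref{lem: singleseparation1} fiberwise, encodes the set of valid windows $E \subseteq R^m \times R_{>0} \times Y$, shows each fiber $E_b$ has full dimension $m+1$ by a perturbation argument, reads off $\dim E = m+1+n$ from Fact~\ref{fact:dimension}(c), and then extracts a box. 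The gain of your approach is that it sidesteps verifying the hypotheses of the single-set lemma for the total set; the cost is that the robustness of the fiber membership under perturbation of $(c,r)$ has to be argued. That step is not quite immediate in the general ordered-vector-space setting (no metric compactness of $\overline{C(c,r)}$), but it does go through because the relevant forbidden sets $\{(c,r): \overline{C(c,r)} \cap W_{i,b} \neq \emptyset\}$ and $\{(c,r): \overline{C(c,r)} \cap (Z_b \setminus \int_{Z_b}(X_b)) \neq \emptyset\}$ are projections of closed semilinear sets, which by Fourier--Motzkin (a closed semilinear set is a finite union of polyhedra defined by non-strict inequalities) are closed; so their complements are open, as your argument needs. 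If you want to avoid this technicality entirely, you could note that once Lemma~\ref{lem: singleseparation1} yields a single cube $C(c_0,r_0) \in E_b$, every sub-cube $C(c',r') \subseteq C(c_0,r_0)$ that still meets $Z_b$ also lies in $E_b$, and the set of such $(c',r')$ is manifestly open and non-empty; this gives $\dim E_b = m+1$ directly. Both approaches rely on the same dimension facts and on quantifier elimination to see that $E$ is semilinear, so neither is more general; yours is perhaps more self-contained conceptually, while the paper's is shorter once the total-space hypotheses are checked.
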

\begin{proof}
Let $X = \{ (a,b) \in R^{m+n} : a \in X_b, b \in Y\}$, let $Z$ be the total flat of $\Z$, and, for every $i \in [k]$, let $W_i$ be the total flat of $\W_i$. Using the fact that $\X$ has uniform closure, we arrange that for each $b\in Y$, all the flat contained in $\fcl(X_b)$ other than $Z_b$ are among the $W_{i,b}$'s. Note that all members of $\Z$ have the same dimension, which we denote by $d$. By assumption, the dimension of each member of $\X$ is also $d$. It follows, from Fact~\ref{fact:dimension}-(c), that $\dim(X) = d + \dim(Y) = d+n$. Similarly, we have $\dim(Z) = d+n$, and, hence, $\dim(Z) = \dim(X)$. Hence, by Lemma~\ref{lem:affineclosure}, we obtain $Z\subseteq \fcl(X)$. Similarly, we also have $\dim (Z \cap W_i) < \dim(Z)$ for every $i \in [k]$.

By Lemma~\ref{lem: singleseparation1}, there is an open cube $C \times C' \subseteq R^m \times R^n$ such that
$$(C \times C') \cap X = (C \times C') \cap Z \neq \emptyset \quad\text{ and }\quad (C \times C') \cap W_i = \emptyset\text{ for every } i \in [k]. $$
By dimension considerations along with Fact~\ref{fact:dimension}-(c), we obtain that the set $\{b \in C' \cap Y : (C \cap Z_b) \neq \emptyset \}$ has dimension $n$. Hence, by Facts~\ref{fact:semilinearfiniteuntion}~and~\ref{fact:dimension}-(d), there is a semilinear open cell $U \subseteq C' \cap Y$ such that  $C \cap Z_b \neq \emptyset$ for every $b \in Y'$. 
It is straightforward to verify that $C$ and $U$ satisfy the assertions of the lemma.
\end{proof}

\begin{proof}[Proof of Proposition~\ref{prop:decomposition}]
Suppose $(I', V', \X')$ is a decomposition of $(I, V, \X)$ at index $i_0$.
By definition, for every $b \in Y$, we can write $X_b[i_0] \heq \bigcup_{j \in [\ell' + 1]} X'_b[(i_0, j)]$. Hence, $\X$ may be identified with a family obtained by restricting a Boolean extension of $\X'$ to an appropriate index set. In particular, that \ref{itm:decompositiona} holds follows immediately from Lemmas~\ref{lem:vcsubfamily}\ref{itm:vcsubfamily} and \ref{lem:Booleancombination}. 

To prove \ref{itm:decompositionb}, we construct a family a family $\X_{\ell'}$ in the composite space $I_{\ell'}$ parametrized by $Y_{\ell'} \subseteq Y$ where $I_{\ell'} = I \cup \{h_1, h_2, \dots, h_{\ell'}\}$, $\X_{\ell'}[I] = \X[I]$, and $\X[h_k] = \Z_k$ for each $k \in [\ell']$. Moreover, we will show that there is $r \in \NN$ such that 
\begin{equation}
\label{eq:decompositiont}
\pi_{\X_{\ell'} \upharpoonright Y_{\ell'}}(t) \leq \pi_{\X}(r t) \text{ for all }t \geq 1.
\end{equation}
Observe that $\X'$ can be obtained from $\X_{\ell'}$ via a Boolean extension, specifically, by considering intersections of members of $\X[i]$ with $\Z[h_i]$ for each $i \in [\ell]$. Hence, that \ref{itm:decompositionb} holds follows from \eqref{eq:decompositiont} along with Lemmas~\ref{lem:vcsubfamily}\ref{itm:vcsubfamily} and \ref{lem:Booleancombination}.

We construct $\X_{\ell'}$ from $\X$ in a sequence of steps. We show that at each step the shatter function can only decrease possibly at the cost of restricting to a subset of the parameter space. Set $I_0 = I$, $V_0 = V$, $\X_0 = \X$, and $Y_0 = Y$. In steps $k = 1, \dots, \ell'$, given the family $\X_{k-1} = (X_{k-1, b})_{b \in Y_{k-1}}$ in $V_{k-1}[I_{k-1}]$, we obtain $\X_k = (X_{k, b})_{b \in Y_{k}}$ in $V_k[I_k]$ in a sequence of sub-steps as follows.

By Lemma~\ref{lem:familyseparation1}, there is an open cube $C_k \subseteq V_{k-1}[i_0]$ and a semilinear open cell $Y_k \subseteq Y_{k-1}$ such that, for every $b \in Y_k$,
$$ C_k \cap X_{k-1, b}[i_0]  = C_k \cap Z_{k, b} \neq \emptyset \quad\text{ and }\quad C_k \cap Z_{j,b}  = \emptyset\text{ for each } j \in [\ell] \setminus \{k\}.$$
Set $\C_k = (C_k)_{b \in Y_k}$, i.e., $\C_k$ is the semilinear family parameterized by $Y_k$ all of whose members are equal to $C_k$. Let $I_{k,1} = I_{k-1} \cup \{c_k\}$ where $c_k$ is any constant not already in $I_{k-1}$. We let $\X_{k,1}$ be such that $\X_{k, 1}[I_{k-1}] = \X_{k-1}[I_{k-1}]$ and $\X_{k,1}[c_k] = \C_k$.
By the definition of the shatter function and Lemma~\ref{lem:vcsubfamily}\ref{itm:addingfamilies}, we obtain
\begin{equation}
\label{eq:decomposition1}
\pi_{\X_{k,1}\upharpoonright Y_k}(t) = \pi_{\X_{k-1}\upharpoonright Y_k}(t) \leq \pi_{\X_{k-1}\upharpoonright Y_{k-1}}(t) \text{ for all }t \geq 1.
\end{equation}
Next, let $I_{k,2} = I_{k_1} \cup \{h_k\}$ where $h_k$ is any constant not already in $I_{k,1}$, and let $\X_{k,2}$ be such that $\X_{k,2}[I_{k,1}] = \X_{k,1}[I_{k,1}]$ and  ${\X_{k,2}[h_k] \heq (X_{k-1, b}[i_0] \cap C_k)_{b \in Y_k}}$. By Lemma~\ref{lem:Booleancombination}, there is $r_k \in \NN$ such that
\begin{equation}
\label{eq:decomposition2}
\pi_{\X_{k,2} \upharpoonright Y_k}(t) \leq \pi_{\X_{k,1} \upharpoonright Y_k}(r_k t) \text{ for all }t \geq 1.
\end{equation} 
Next, let $I_{k,3} = I_{k,2}$, and $\X_{k,3}$ be such that $\X_{k,2}[I_{k,3} \setminus \{h_k\}] = \X_{k,1}[I_{k,1}\setminus \{h_k\}]$ and $\X_{k,2}[h_k] = (Z_{k, b})_{b \in Y_k}$.
Noting that, for each $b \in Y_k$, we have $\fcl(X_{k-1, b}[i_0] \cap C_k) = Z_{k, b}$, by Lemma~\ref{lem:extensionbyclosure}, we obtain
\begin{equation}
\label{eq:decomposition3}
\pi_{\X_{k,3} \upharpoonright Y_k}(t) \leq \pi_{\X_{k,2} \upharpoonright Y_k}(t) \text{ for all }t \geq 1.
\end{equation}
Finally, set $I_k = I_{k,3} \setminus \{c_k\}$ and $\X_k[I_k] = \X_{k, 3}[I_k]$.
By Lemma~\ref{lem:vcsubfamily} and \eqref{eq:decomposition1}--\eqref{eq:decomposition3}, we have
\begin{equation}
\label{eq:decomposition4}
\pi_{\X_k \upharpoonright Y_k}(t) \leq \pi_{\X_{k} \upharpoonright Y_{k-1}}(r_k t) \text{ for all }t \geq 1.
\end{equation}
To complete the proof, observe that \eqref{eq:decompositiont} follows by induction from \eqref{eq:decomposition4}.
\end{proof}

\subsection{Deconstruction}
\label{sec:deconstruction}

Throughout this section, fix $i_0 \in I$. Suppose that the uniform closure of $\X[i_0]$ is the flat family $\Z = (Z_{b})_{b \in Y}$ and that $\Upsilon = \{ \h_1, \dots, \h_\ell \}$, with  $\h_j = (H_{j,b})_{b\in Y}$ for each $j \in [\ell]$,  is a uniform essential approximation of $\X[{i_0}]$. For each $b \in Y$, let $\E_b = \E_b := \{ H_{j,b} : j \in [\ell] \}$ be the essential approximation of $X_b$ given by $\Upsilon$ and let $\P_b = \P(\E_b) = \{P_{J, b} : J \subseteq [\ell] \}$ be the partition of $Z_b$ induced by $\E_b$.

Given a family of semilinear sets, the following definition provides a simpler family in a suitable composite space made up from flats, half-flats, and semilinear sets of lower dimension. Take the example $(X_{b_1, b_2})_{(b_1, b_2)\in \RR^2}$ where $X_b$ is the union $P_{b_1, b_2} \sqcup Q_{b_1, b_2}$ of  $P_{b_1, b_2} = \{ (b_1-1, b_2-1)\}$ and $Q_{b_1,b_2}=\{(x_1, x_2) \in \RR^2 : x_1 > b_1, x_2>b_2 \}\setminus\{(b_1+1, b_2+1)  \}$. 

Note that the flat closure of $X_{b_1, b_2}$ is just $\RR^2$ and the essential approximation consists of two half-flats given by $H_{b_1} := \{x_1 > b_1\}$ and $H_{b_2} := \{ x_2 > b_2 \} $. Now, the decomposition of the family $(X_{b_1, b_2})_{(b_1, b_2) \in \RR^2}$ will consist of the half-flat families $(H_{b_1})_{(b_1, b_2) \in \RR^2}$ and $(H_{b_2})_{(b_1, b_2) \in \RR^2}$. It will also contain the constant family all of whose members are $\RR^2$, corresponding to the flat closure. Additionally, for each of the four quadrant defined by  $H_{b_1}$ and $H_{b_2}$, we will have a family whose members are the lower dimensional part of $X_{b_1, b_2}$ in that quadrant (the existence of which is guaranteed by Lemma~\ref{lem:essentialapproxdim}\ref{itm:essentialapproxdim2}).

\begin{defn}[Deconstruction]
\label{def:deconstruction}
We say that $(I', V', \X')$ with $\X' =(X'_b)_{b\in Y}$ is a \emph{deconstruction} of $(I, V, \X)$ at index $i_0$ if the following hold:
\begin{enumerate}[label = (\alph*)]
    \item $I' = (I \setminus \{{i_0}\})  \bigsqcup \left(\{{i_0}\} \times [\ell + 1]\right) \bigsqcup \left(\{{i_0}\} \times 2^{[\ell]}\right)$;
    \item $\X'[I \setminus \{{i_0}\}] = \X[I \setminus \{{i_0}\}]$;
    \item $V'[({i_0},k)] \heq V[i_0]$ for every $j \in [\ell + 1] \cup 2^{[\ell]}$;
    \item $\X'[(i_0, j)] = \h_j$ for every $j \in [\ell]$ and $\X'[(i_0, \ell + 1)] = \Z$;
    \item for every $J \subseteq [\ell]$, we have $\X'[(i_0, J)] = (X_{J, b})_{b \in Y}$ where, for every $b \in Y$,
$$ X_{J, b} \heq
\begin{cases}
	P_{J, b} \cap X_b[i_0] & \text{ if }\dim(P_{J, b} \cap X_b[i_0]) < \dim(X_b[i_0]);\\
	P_{J, b} \setminus X_b[i_0] & \text{ if }\dim(P_{J, b} \cap X_b[i_0]) = \dim(X_b[i_0]).\\
\end{cases}
$$
\end{enumerate}
\end{defn}
The main point of this section is the following.
\begin{prop} \label{prop:deconstruction}
Let $(I', V', \X')$ be a deconstruction of $\X$ at index $i_0 \in I$. Then the following holds:
\begin{enumerate}[label = (\alph*)]
    \item\label{itm:deconstructiona} there is an $r_1 \in \NN$ such that $\pi_\X(t) \leq \pi_{\X'}(r_1t)$ for every $t \geq 1$;
    \item\label{itm:deconstructionb} there is an $r_2 \in \NN$ and a semilinear open cell $U \subseteq Y$ such that $\pi_{\X'\upharpoonright U}(t) \leq \pi_{\X\upharpoonright U}(r_2 t)$ for every $t \geq 1$.
\end{enumerate}
\end{prop}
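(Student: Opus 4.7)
The plan is to exhibit $\X$ as obtained from $\X'$ by a sequence of one-step Boolean extensions followed by restriction to the sub-index-set $I$. For each $b \in Y$, the essential approximation induces the partition $\{P_{J, b}\}_{J \subseteq [\ell]}$ of $Z_b$, where each $P_{J, b}$ is a Boolean combination of $Z_b$ and $H_{1, b}, \ldots, H_{\ell, b}$, and inside each cell Definition~\ref{def:deconstruction}(e) gives $X_b[i_0] \cap P_{J, b}$ equal to either $X_{J, b}$ or $P_{J, b} \setminus X_{J, b}$. Thus $X_b[i_0]$ is an explicit Boolean combination of the values of $\X'$ at $b$. Iterating Lemma~\ref{lem:Booleancombination} to build a Boolean extension $\X''$ of $\X'$ with $X''_b[i_0] = X_b[i_0]$ and then applying Lemma~\ref{lem:vcsubfamily}\ref{itm:vcsubfamily} to pass to $\X = \X''[I]$ yields $\pi_\X(t) \leq \pi_{\X'}(r_1 t)$ for a suitable $r_1$.

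\textbf{Part (b).} The plan is to follow the template of the proof of Proposition~\ref{prop:decomposition}\ref{itm:decompositionb}, building $\X'$ from $\X$ by iteratively introducing each ``new'' index in $\{i_0\} \times ([\ell+1] \cup 2^{[\ell]})$. For each such index $(i_0, k)$, I would (i) apply an appropriate window lemma to produce an open cube $C_k \subseteq V[i_0]$ and a semilinear open cell of the current parameter space on which $X_b[i_0] \cap C_k$ equals the intended piece intersected with $C_k$; (ii) introduce the constant family with value $C_k$ at an auxiliary index (which does not change the shatter function); (iii) take a Boolean extension to produce $X_b[i_0] \cap C_k$ at a new index; (iv) replace this local version by the full piece at $(i_0, k)$ via an extension lemma analogous to Lemma~\ref{lem:extensionbyclosure}; and (v) discard the auxiliary index. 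For the flat index $(i_0, \ell+1)$ the window lemma is exactly Lemma~\ref{lem:familyseparation1} and the extension lemma is exactly Lemma~\ref{lem:extensionbyclosure}. The final $U$ is obtained by intersecting the semilinear open cells produced at each stage.

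\textbf{The hard part} will be formulating and proving the half-flat and lower-dimensional analogues of the window and extension lemmas. For a half-flat $\h_j$, the essentiality of $H_{j,b}$ for $X_b[i_0]$ provides, near a witness point on $\bd(H_{j,b})$, a cube where $X_b[i_0]$ agrees with $H_{j,b}$; to upgrade this local half-flat to the full $\h_j$ I would exploit that the boundaries $\bd(H_{j,b})$ for $b$ in a cell are parallel $(d-1)$-flats, so any finite point configuration can, after further shrinking $U$, be translated into the window while preserving its intersection pattern with $\h_j$, in analogy with the proof of Lemma~\ref{lem:extensionbyclosure}. For the lower-dimensional pieces, Lemma~\ref{lem:essentialapproxdim}\ref{itm:essentialapproxdim2} together with uniformity lets us place $C_J$ uniformly inside $P_{J, b}$, where $X_b[i_0] \cap C_J$ equals either $X_{J, b} \cap C_J$ or its complement in $C_J$; the extension step is subtler since the fibers need not be parallel and will require a more careful structural argument. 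Finally, coordinating all successive cell restrictions into the single cell $U$ asserted in the statement is the main bookkeeping task.
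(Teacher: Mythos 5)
Your part (a) is exactly the paper's argument: inside each cell $P_{J,b}$ (itself a Boolean combination of $Z_b$ and the $H_{j,b}$'s) the set $X_b[i_0]$ is either $X_{J,b}$ or $P_{J,b}\setminus X_{J,b}$, so $\X$ is a restriction of a Boolean extension of $\X'$, and Lemmas~\ref{lem:vcsubfamily}\ref{itm:vcsubfamily} and~\ref{lem:Booleancombination} finish. For part (b) the paper's proof is literally ``run the proof of Proposition~\ref{prop:decomposition}\ref{itm:decompositionb} again, with the Second Window Lemma (Lemma~\ref{lem:familyseparation2}) in place of the First,'' so your template (window, constant cube, Boolean extension, extension-by-closure-type replacement, discard the auxiliary index, intersect the cells) is the intended one, and the half-flat window statement you sketch from essentiality at a witness point is precisely what Lemmas~\ref{lem:singleseparation2}, \ref{lem:essentialcriterion} and~\ref{lem:familyseparation2} establish --- the nontrivial content there is making the window uniform in the parameter, via the dimension-theoretic criterion for essentiality, which you leave as a to-do. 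Also note that translation alone, ``after shrinking $U$,'' will not place an arbitrary finite configuration inside a fixed cube; the upgrade from $(H_{j,b}\cap C)_b$ to $\h_j$ needs the scaling argument as in Lemmas~\ref{lem:extensionbyclosure} and~\ref{lem:flatfamilyrestiction}, using that the boundaries form a half-flat family with a total half-flat.

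The genuine misstep is your treatment of the lower-dimensional indices $(i_0,J)$. You propose to manufacture each $\X'[(i_0,J)]$ by its own window-plus-extension step and correctly sense that this is problematic: there is no extension lemma recovering an arbitrary lower-dimensional semilinear family from its trace in a small cube, so that route would stall. But no such step is needed. In the intermediate family (the analogue of $\X_{\ell'}$ in the proof of Proposition~\ref{prop:decomposition}) the original index $i_0$ is retained, and only the flat family $\Z$ and the half-flat families $\h_1,\dots,\h_\ell$ are adjoined via the window argument. Since each $P_{J,b}$ is a Boolean combination of $Z_b$ and the $H_{j,b}$'s, each $X_{J,b}$ is a fixed Boolean combination of $X_b[i_0]$, $Z_b$, $H_{1,b},\dots,H_{\ell,b}$; hence all indices $(i_0,J)$ (and the discarding of $i_0$) are produced in one final application of Lemma~\ref{lem:Booleancombination} together with Lemma~\ref{lem:vcsubfamily}\ref{itm:vcsubfamily}, exactly as the decomposition proof obtains $\X'$ from $\X_{\ell'}$. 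With that correction your outline coincides with the paper's proof; what remains to supply is the uniform Second Window Lemma and the half-flat analogue of Lemma~\ref{lem:extensionbyclosure}, not any argument for the lower-dimensional pieces.
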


Given Lemma~\ref{lem:familyseparation2} below, the proof of Proposition~\ref{prop:deconstruction} is essentially the same as that of Proposition~\ref{prop:decomposition}.

\begin{lem} \label{lem:singleseparation2}
Suppose $X \subseteq R^m$ is semilinear and that $Z = \fcl(X)$ is a flat with $\dim(Z) = d > 0$. Let $H \subseteq Z$ be a half-flat which is essential for $X$ and $H_1, \ldots, H_k \subseteq Z$ be half-flats such that any two half-flats in $H, H_1, \dots, H_k$ have distinct boundaries.

Then there is an open cube $C \subseteq \R^m$ such that 
$$ C\cap X = C \cap H \neq \emptyset $$
and, for every $i \in [k]$,
$$ C \cap H_i =C  \quad\text{ or }\quad   C  \cap H_i =\emptyset. $$
\end{lem}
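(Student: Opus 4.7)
The strategy is to improve on the essential witness of $H$ by moving to a generic location with respect to the boundaries of the $H_i$'s, then shrink the witnessing cube. By the definition of ``essential'', fix some $a_0 \in \bd(H)$ together with an open cube $C_0$ centered at $a_0$ satisfying $X \cap C_0 = H \cap C_0$. Let $W \subseteq \bd(H)$ be the set of all points that witness $H$ being essential for $X$. The proof of Lemma~\ref{lem:witnesssetdim}\ref{itm:witnesssetdim-c} in fact establishes the local inclusion $C_0 \cap \esb(X) \subseteq W$, and Fact~\ref{fact:essentialdim} gives $\dim_{a_0}(\esb(X)) = d-1$; combining these yields $\dim(W \cap C_0) = d-1$.

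Next, I would use the distinct-boundary hypothesis to pick a witness lying off every $\bd(H_i)$. For each $i \in [k]$, both $\bd(H)$ and $\bd(H_i)$ are $(d-1)$-dimensional flats contained in $Z$; since they are distinct, $\bd(H) \cap \bd(H_i)$ is a proper subflat of $\bd(H)$ and therefore has dimension at most $d-2$. Consequently $\bd(H) \cap \bigcup_{i=1}^k \bd(H_i)$ is a finite union of flats of dimension at most $d-2$, which cannot swallow a $(d-1)$-dimensional set. Hence one can select
\[
  a \;\in\; (W \cap C_0) \setminus \bigcup_{i=1}^k \bd(H_i).
\]

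Finally, I would shrink to a cube around $a$ meeting all the required conditions. Since $a \in W$, some open cube $C_1 \subseteq C_0$ centered at $a$ obeys $X \cap C_1 = H \cap C_1$. For each $i$, the condition $a \notin \bd(H_i)$ means that $a$ lies strictly on one side of the linear equation cutting out $H_i$ inside $Z$, so an open cube $C_2^{(i)}$ centered at $a$ can be chosen small enough that either $C_2^{(i)} \cap Z \subseteq H_i$ or $C_2^{(i)} \cap Z \cap H_i = \emptyset$ (depending on which side $a$ is on). Taking $C$ to be the open cube centered at $a$ whose radius is the minimum of those of $C_1, C_2^{(1)}, \dots, C_2^{(k)}$ produces the required $C \cap X = C \cap H \neq \emptyset$ together with the dichotomy $C \cap H_i = C$ or $C \cap H_i = \emptyset$ for each $i$, with the former read through the identification $C \cap H_i = C \cap Z$ in the usual way. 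The main obstacle is the dimension count in the second step: one must transfer fullness of dimension from $\esb(X)$, where Fact~\ref{fact:essentialdim} applies directly, to the witness set $W$, and this is precisely what the local inclusion extracted from the proof of Lemma~\ref{lem:witnesssetdim}\ref{itm:witnesssetdim-c} provides.
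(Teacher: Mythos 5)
Your proof is correct and follows essentially the same route as the paper: choose a witness point of $H$ avoiding all $\bd(H_i)$ by a dimension count, then intersect a witnessing cube with cubes realizing the dichotomy for each $H_i$. The only cosmetic difference is that you re-derive the $(d-1)$-dimensionality of the witness set locally from Fact~\ref{fact:essentialdim}, whereas the paper simply invokes Lemma~\ref{lem:witnesssetdim}(b), which states this directly.
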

\begin{proof}
By Lemma~\ref{lem:witnesssetdim}, the set $W$ of points witnessing that $H$ is essential for $X$ has dimension $d-1$. Since the flats $H_1, \dots, H_k$ have essential boundaries distinct from the essential boundary of $H$, we may choose a point $a \in W$ such that $a \notin \esb(H_i)$ for every $i\in [k]$.

By definition, there is an open cube $C_0$ centered at $a$ such that $C_0 \cap  X=  C_0 \cap H$. On the other hand, for every $j \in [\ell]$, since $a \notin \esb(H_j)$ there exists a cube $C_j$ centered at $a$ such that $C_j \cap H_j = C$ or $C_j \cap H_j =\emptyset.$ Now any open cube contained in $\bigcap_{i=0}^k C_i$ satisfies the assertions of the lemma.
\end{proof}

The following lemma characterizes the condition that $H$ is essential for $X$ in term of dimension. This is the key to obtain a version of Lemma~\ref{lem:singleseparation2} for families.

\begin{lem}
\label{lem:essentialcriterion}
Let $X \subseteq \R^m$ be a semilinear set and suppose that $Z = \fcl(X)$ is a flat with $\dim(Z) = d > 0$.  Let $H \subseteq Z$ be a closed half-flat such that 
$$\dim ( X \cap \bd(H) \cap \esb(X)  \cap \esb(X \cap H)) = d-1. $$ Then $H$ is essential for $X$.
\end{lem}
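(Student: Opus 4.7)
The plan is to extract a sufficiently generic point $a$ from the $(d-1)$-dimensional set provided by the hypothesis and to show that, on a small enough open cube $C$ centered at $a$, one is forced to have $X \cap C = H \cap C$, so that $a$ witnesses $H$ as essential for $X$. Note first that the hypothesis gives $\dim(\esb(X\cap H))\ge d-1$; applying Fact~\ref{fact:essentialdim} to $X\cap H$ therefore yields $\dim(X\cap H)=d$, whence $\fcl(X\cap H)=Z$ by Lemma~\ref{lem:affineclosure}. In particular, the condition $a\in\esb(X\cap H)$ is just $\dim_a(X\cap H)=d$, with local dimension taken relative to the ambient flat $Z$.

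Since $X$ is a semilinear subset of $Z$, I would write it as a Boolean combination of linear half-spaces of $Z$, and let $\mathcal{F}$ be the finite collection of the $(d-1)$-dimensional boundary flats of these half-spaces, augmented by $\bd(H)$. The ``bad locus''
$$B\;:=\;\bigcup_{F\in\mathcal{F},\,F\neq\bd(H)}(F\cap\bd(H))\;\cup\;\bd_{\bd(H)}(X\cap\bd(H))$$
is then a finite union of sets of dimension at most $d-2$: each $F\cap\bd(H)$ with $F\neq\bd(H)$ is the intersection of two distinct $(d-1)$-flats in $Z$, and the topological boundary of the semilinear set $X\cap\bd(H)$ inside the $(d-1)$-flat $\bd(H)$ is again at most $(d-2)$-dimensional. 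Since the intersection in the hypothesis has dimension $d-1$, one can pick $a\in X\cap\bd(H)\cap\esb(X)\cap\esb(X\cap H)$ outside $B$. Then choose an open cube $C$ centered at $a$ so that (i)~$C$ is disjoint from every $F\in\mathcal{F}\setminus\{\bd(H)\}$, and (ii)~$\bd(H)\cap C\subseteq X$; the latter is possible because $a\in X$ and $a\notin\bd_{\bd(H)}(X\cap\bd(H))$.

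Inside this $C$, every half-space entering the Boolean description of $X$ is either disjoint from $C$, contains $C$, or agrees on $C$ with one of $H$ and $Z\setminus H$. Consequently $X\cap C$ is a union of some subcollection of the three disjoint pieces $H^\circ\cap C$, $(Z\setminus H)\cap C$, $\bd(H)\cap C$, with the last piece fully included by construction. A case analysis then pins down the configuration uniquely: if both open pieces appear, then $\dim_a(Z\setminus X)<d$, while if neither appears then $\dim_a(X)<d$, and both options contradict $a\in\esb(X)$; if only $(Z\setminus H)\cap C$ appears, then $X\cap H\cap C\subseteq\bd(H)\cap C$ has dimension $d-1$, contradicting $\dim_a(X\cap H)=d$. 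Hence only $H^\circ\cap C$ appears among the two open pieces, so $X\cap C=H^\circ\cap C\cup(\bd(H)\cap C)=H\cap C$, and $H$ is essential for $X$ with witness $a$. The delicate step is the genericity argument in the second paragraph—ensuring that $\bd(H)$ is the only $(d-1)$-flat from the Boolean description of $X$ that is ``active'' near $a$—which hinges on the dimension accounting of the bad locus $B$.
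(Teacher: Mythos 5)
Your proof is correct and takes essentially the same route as the paper's: pick a witness point generic in the hypothesized $(d-1)$-dimensional set off a $(d-2)$-dimensional bad locus, shrink to a cube on which $X$ is all-or-nothing on each open side of $\bd(H)$, and let $\esb(X\cap H)$ and $\esb(X)$ force $X\cap C=H\cap C$. The only (cosmetic) difference is the choice of bad locus: the paper uses $\fcl(\bd(X)\setminus\bd(H))$ together with taking $a$ in $\mathrm{int}_{\bd(H)}(E)$, whereas you use the hyperplane arrangement coming from a Boolean representation of $X$ plus $\bd_{\bd(H)}(X\cap\bd(H))$ --- and this last piece is in fact redundant, since once $C$ avoids all arrangement hyperplanes other than $\bd(H)$, the trace $X\cap\bd(H)\cap C$ is automatically all of $\bd(H)\cap C$ because $a\in X$.
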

\begin{proof}
Let $E = X \cap \bd(H) \cap \esb(X) \cap \esb(X \cap H) $. Note that $\fcl(\bd(X) \setminus \bd(H)))$ is a union of flats that does not contain $\bd(H)$, while $E \subseteq \bd(H)$. It follows that $\dim(E \cap \fcl(\bd(X) \setminus \bd(H)) ) \leq d - 2$. Hence, by replacing $E$ with $E \setminus \fcl((\bd(X)\setminus \bd(H)))$  if necessary, we may assume that $E \cap \fcl((\bd(X)\setminus \bd(H))) = \emptyset$.

Let $a \in \mathrm{int}_{\bd(H)}(E)$ and note that, by definition, there is an open cube $C_1 \subseteq R^m$ centered at $a$ such that $C_1 \cap \bd(H) = C_1 \cap E$. Furthermore, since $\fcl(\bd(X) \setminus \bd(H))$ is a union of flats and $a \notin \fcl(\bd(X) \setminus \bd(H))$ by assumption, there is an open cube $C_2 \subseteq R^m$ centered at $a$ such that $C_2 \cap \fcl(\bd(X) \setminus \bd(H))= \emptyset$. In particular, we have $C_2 \cap (\bd(X)\setminus \bd(H)) = \emptyset.$
Now $C = C_1 \cap C_2$ is an open cube centered in $E$ such that
$$    C \cap \bd(H) = C \cap E \quad\text{ and }\quad C \cap (\bd(X)\setminus \bd(H)) = \emptyset. $$
To show that $H$ is essential for $X$, it suffices to show that $X \cap C = H \cap C$ since this implies that $a$ is a witness that $H$ is essential for $X$.

Let $U_1 = H \setminus \bd(H)$ and $U_2 = R^m \setminus H$ be the two open half-flats in $Z$ determined by $\bd(H)$.
Since $C\cap (\bd(X)\setminus \bd(H)) = \emptyset$, for each $U \in \{U_1, U_2\}$, we have
$$ C \cap (X \cap U) = C \cap U \quad\text{ or }\quad C \cap (X \cap U) = \emptyset.$$

Recalling that $C$ is centered in $E \subseteq \esb(X \cap H)$, we have $\dim(C \cap X \cap H) = d$, implying that $\dim (C \cap (X \cap U_1)) = d$, and, in particular, $C \cap (X \cap U_1) = C \cap U_1$. Now, since $E \subseteq \esb(X)$, it must have non-empty intersection with $Z \setminus X$ implying that $C \cap (X \cap U_2) = \emptyset$.

Finally, it remains to show $C \cap \bd(H) = C \cap (X \cap \bd(H))$. Clearly, $C \cap (X \cap \bd(H)) \subseteq C \cap \bd(H)$. On the other hand, by definition of $E$, we have $C \cap E \subseteq C \cap (X \cap \bd(H))$. Recalling that $C \cap E = C \cap \bd(H)$, we obtain the desired assertion.
\end{proof}

We now prove the promised version of Lemma~\ref{lem:singleseparation2} for families.

\begin{lem}[Second Window Lemma] \label{lem:familyseparation2}
Let $Y \subseteq \R^n$ be an open set and $\X = (X_b)_{b \in Y}$ be a semilinear family in $R^m$. Suppose that the uniform closure of $\X$ is the flat family $\Z = (Z_b)_{b \in Y}$ and that $\h = (H_b)_{b \in Y}$ is a half-flat family such that $H_b$ is essential for $X_b$ for every $b \in Y$. Let $k \in \NN$ and, for every $i \in [k]$, $\h_i = (H_{i,b})_{b \in Y}$ be a half-flat family. Suppose that, for every $b \in Y$, any two half-flats in $H_b, H_{1, b}, \dots, H_{k, b}$ have distinct boundaries.

Then there is an open cube $C \subseteq R^m$ and a semilinear open cell $U \subseteq Y$ such that, for every $b \in U$,
$$ C \cap X_{b}  = C \cap H_b \neq \emptyset$$
and, for every $i \in [k]$,
$$ C \cap H_{i,b}  = C \cap Z_b \quad\text{ or }\quad C \cap H_{i,b}  = \emptyset.  $$

\end{lem}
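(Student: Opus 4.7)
The strategy is to reduce to the single-set Lemma~\ref{lem:singleseparation2} by passing to totalized objects in $R^{m+n}$, as in the proof of Lemma~\ref{lem:familyseparation1}. Let $X = \{(a,b) \in R^{m+n} : a \in X_b,\ b \in Y\}$, let $Z$ be the total flat of $\Z$, let $H$ be the total half-flat of $\h$, and let $H_i$ be the total half-flat of $\h_i$ for $i \in [k]$. Uniformity of $\Z$ together with openness of $Y$ and Fact~\ref{fact:dimension}-(c) yields $\fcl(X) = Z$, and all of $H, H_1, \dots, H_k$ sit inside $Z$. Pairwise distinctness of boundaries lifts from fibers to total half-flats, since two total half-flats share a boundary only if all of their fibers do.

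The crux of the argument is to show that $H$ is essential for $X$. I plan to invoke the dimension criterion of Lemma~\ref{lem:essentialcriterion}, reducing the task to verifying
\[
\dim\bigl(X \cap \bd(H) \cap \esb(X) \cap \esb(X \cap H)\bigr) = \dim Z - 1.
\]
For each $b \in Y$, the hypothesis that $H_b$ is essential for $X_b$ together with Lemma~\ref{lem:witnesssetdim}(b) yields a witness set $W_b$ of dimension $\dim Z_b - 1$, and a direct unwinding of the definitions (noting that near a witness $a$, $X_b$ agrees with $H_b$ on one side of $\bd(H_b)$ and with $Z_b \setminus H_b$ on the other) shows $W_b \subseteq X_b \cap \bd(H_b) \cap \esb(X_b) \cap \esb(X_b \cap H_b)$. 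Fibering these witness sets over an open subset of $Y$ and applying Fact~\ref{fact:dimension}-(c) produces a subset of the total set above of dimension $(\dim Z_b - 1) + n = \dim Z - 1$; the reverse inequality is immediate since the set lies in the flat $\bd(H)$ of dimension $\dim Z - 1$.

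Once essentiality is established, Lemma~\ref{lem:singleseparation2} supplies an open cube $C \times C' \subseteq R^{m+n}$ with $(C \times C') \cap X = (C \times C') \cap H \neq \emptyset$ and, for each $i$, either $(C \times C') \cap H_i = C \times C'$ or $(C \times C') \cap H_i = \emptyset$. To descend to the parameter space, observe that $T := \{b \in C' \cap Y : (C \times \{b\}) \cap X \neq \emptyset\}$ is semilinear of dimension $n$ by Fact~\ref{fact:dimension}-(c,d), so by Fact~\ref{fact:semilinearfiniteuntion} it contains a semilinear open cell $U$. Reading the totalized identities fiber by fiber on $U$ gives $C \cap X_b = C \cap H_b \neq \emptyset$; the required dichotomy then follows automatically, since in the first case $C \subseteq H_{i,b} \subseteq Z_b$ forces $C \cap H_{i,b} = C = C \cap Z_b$, while in the second case $C \cap H_{i,b} = \emptyset$.

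The main obstacle is the essentiality step: the notion of an essential half-flat is genuinely local and two-sided, requiring density of both $X$ and $Z \setminus X$ on opposite sides of $\bd(H)$, so lifting it from every fiber to the total set requires carefully combining the fiberwise witness-dimension count with the uniformity of the closure. Once this is in place, the remaining steps closely mirror the proof of the first window lemma.
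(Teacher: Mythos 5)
Your overall route is the paper's: totalize over the parameters, show the total half-flat $H$ is essential for the total set $X$ via the dimension criterion of Lemma~\ref{lem:essentialcriterion}, apply Lemma~\ref{lem:singleseparation2}, and descend to fibers. But the crux step is exactly where your argument has a genuine gap. You claim that fibering the witness sets, i.e.\ taking $W=\{(a,b): a\in W_b\}$, ``produces a subset of the total set'' $X \cap \bd(H)\cap\esb(X)\cap\esb(X\cap H)$. Fiberwise membership $a\in\esb(X_b)\cap\esb(X_b\cap H_b)$ does \emph{not} imply $(a,b)\in\esb_Z(X)\cap\esb_Z(X\cap H)$: the total essential boundary requires full-dimensional behavior of $X$ and $Z\setminus X$ across a full-dimensional set of nearby parameters, not just in the single fiber over $b$. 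The containment $W\subseteq\esb_Z(X)$ is in fact false in general. For instance, take $m=2$, $Y=(0,1)$, $H_b=\{x_1\ge 0\}$ for all $b$, $X_b=\{x_1\ge 0\}$ for $b=1/2$ and $X_b=\{x_1\ge 0\}\cup\{x_1\ge -3,\,|x_2|\le 1\}$ for $b\neq 1/2$; all hypotheses of the lemma hold, and $a_0=(0,0)$ is a witness for the fiber $b_0=1/2$, yet $(a_0,b_0)\notin\esb_Z(X)$ because for every $b'\neq 1/2$ the fiber $X_{b'}$ covers a whole neighborhood of $a_0$, so $Z\setminus X$ has dimension $2<3$ locally. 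The paper's proof avoids this by passing from $W$ to its relative interior $W'=\int_{\bd(H)}(W)$ (which still has dimension $\dim Z-1$, since $W$ is semilinear and full-dimensional in the flat $\bd(H)$): at a relative-interior point, all nearby points of $\bd(H)$ are witnesses in their own fibers, and only then does the fiberwise two-sided density propagate to the total local dimensions. Your sketch names this as the main obstacle but does not actually supply this (or any equivalent genericity-over-parameters) argument, so as written the essentiality step fails.

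Two smaller omissions: you never make the reduction to closed $H_b$ (replacing $X_b,H_b$ by $Z_b\setminus X_b$, $Z_b\setminus H_b$ if necessary), which is needed both for $W\subseteq X$ and because Lemma~\ref{lem:essentialcriterion} is stated for closed half-flats; and in the descent the correct fiberwise dichotomy comes from $(C\times U)\cap H_i=(C\times U)\cap Z$ or $\emptyset$, giving $C\cap H_{i,b}=C\cap Z_b$ or $\emptyset$ --- your phrase ``$C\subseteq H_{i,b}$ forces $C\cap H_{i,b}=C$'' cannot be right since $Z_b$ is a lower-dimensional flat. Your handling of the final restriction to a semilinear open cell $U$ (via the set of parameters with nonempty fiber) is fine and mirrors the First Window Lemma; you also skip the paper's preliminary partition of $Y$ according to which piece $P_{J,b}$ carries the witnesses, which is a harmless divergence, but the essentiality lift must be repaired before the proof stands.
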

\begin{proof}
We assume that $H_b$ is closed for every $b \in Y$. This is without loss of generality, since otherwise, we may consider the families $(Z_b \setminus X_b)_{b\in U}$ and $(Z_b \setminus H_b)_{b \in Y}$ instead of $\X$ and $\h$ respectively.
In particular, this allows us to assume $\esb(H_b) \subseteq H_b$ for every $b\in Y$.

Let $d > 0$ be such that $\dim(X_b) = d$ for each $b \in Y$. By Lemma~\ref{lem:witnesssetdim}, for each $b \in Y$, the set $W_b$ of points witnessing that $H_b$ is essential for $X_b$ has dimension $d-1$. Let $b \in Y$ and $\P_b = \{P_{J, b} : J \subseteq [k]\}$ be the partition of $H_b$ induced by the half-flats in $\{H_{1, b}, \dots, H_{k, b}\}$. In particular, let
$$  P_{J,b} = H_b \cap \left( \bigcap_{i \in J} H_{i,b} \right) \cap  \left( \bigcap_{i \in [k] \setminus J} Z_b \setminus H_{i,b} \right).  $$

By Lemma~\ref{lem:essentialapproxdim}\ref{itm:essentialapproxdim1}, for each $J \subseteq [k]$, either $\dim(P_{J,b}) = d$ or $\dim(P_{J,b}) \leq d-2$. Let $H'_b = \bigcup_J P_{J,b}$ with $J$ ranging over the subsets of $[k]$ such that $\dim(P_{J,b}) = d$. 
Then $\dim  (H_{b} \setminus H'_b) \leq d-2 $, and, hence, for each $b \in Y$,
$$\dim ( W_b \cap H'_b ) = d-1.$$
It follows that, for every $b \in Y$, there is $J \subseteq [k]$ such that 
\begin{equation}
\label{eq:familysep2eq1}
\dim(P_{J,b}) = d \quad\text{ and } \quad  \dim ( W_b \cap P_{J,b}) = d-1.
\end{equation}
For every $J \subseteq[k]$, let $Y_J \subseteq Y$ be the set consisting of $b\in Y$ such that \eqref{eq:familysep2eq1} holds. Since this gives a partition of $Y$, there is a $J_0$ such that $\dim(Y_{J_0}) = n$. Let $Y_0 \subseteq Y_{J_0}$ be an open cube.

Now, let $X = \{(a, b) \in \R^m \times \R^n: a \in X_b, b \in Y_0\}$ and let $Z \subseteq \R^m \times \R^n$ be the flat closure of $\{(a, b) \in \R^m \times \R^n: a \in Z_b, b \in Y_0\}$, so $Z$ is the total flat of $\Z$. Since $X \subseteq Z$ and $\dim(X) = \dim(Z) = d+n$ (by Fact~\ref{fact:dimension}-(c)), we have $Z = \fcl(X)$ by Lemma~\ref{lem:affineclosure}. 

Let $H \subseteq Z$ be the total half-flat of $\h$, i.e., $H$ is the half-flat such that, $H_b = \{a \in \R^m: (a, b) \in H\}$ for each $b \in Y$. Then $H$ is an essential half-flat for $X$.
To see this, consider $W = \{ (a, b) \in \R^m \times \R^n : a \in W_b, b \in Y_0 \} \subseteq X \cap \bd_Z(H)$ and let $W' = \int_{\bd(H)}(W)$.  By dimension counting, we have
$$ W' \subseteq X \cap \bd_Z(H) \cap \esb_Z(X) \cap \esb_Z(X \cap H), $$
has dimension $d-1$ implying, by Lemma~\ref{lem:essentialcriterion}, that $H$ is an essential half-flat of $X$.

For each $j \in [k]$, let $H_j \subseteq \R^m \times \R^n$ be the total half-flat of $\h_j$. That is, for each $j \in [k]$ and $b \in Y$, we have $H_{j, b} = \{ a \in \R^m: (a, b) \in H_j\}$. 

Now, applying Lemma~\ref{lem:singleseparation2} to the flats $X, Z, H$, and the collection $\{H_1, \dots, H_k\}$, we obtain a cube $C \times U \subseteq R^m \times R^n$ such that 
\begin{equation}
\label{eq:familyseparation2cubeintersection}
(C \times U) \cap X = (C \times U) \cap H \neq \emptyset
\end{equation}
and, for every $j \in [k]$,
\begin{equation}
\label{eq:familyseparation2cubeintersection2}
(C \times U) \cap H_j = (C \times U) \cap Z  \quad\text{ or }\quad  (C \times U) \cap H_j = \emptyset.
\end{equation}
First note that \eqref{eq:familyseparation2cubeintersection} implies $U \cap Y_0 \neq \emptyset$. By replacing $U$ with $U \cap Y_0$, we may assume that $U$ is an open cube contained in $Y_0$. Hence, we obtain, for every $b \in U$, that $C \cap X_b = C \cap H_b \neq \emptyset$. The remaining assertions follow from \eqref{eq:familyseparation2cubeintersection2}.
\end{proof}

\section{Proof of the main theorems}
\label{sec:mainproof}

In this section, we will prove Theorem~\ref{thm:main} and Theorem~\ref{thm:mainversion2} in a more general form. Throughout, let $\cR =( R; +, <, \ldots)$ be an o-minimal expansion of an ordered Abelian group. 

% Theorem~\ref{thm:mainversion2general}, which essentially generalizes Theorem~\ref{thm:main} and Theorem~\ref{prop:main2} to other models of the same theory. Let $(R; +, <)$ be a model of the theory of $(\RR; +, <)$.

To prove Theorem~\ref{thm:main}, recall from Section~\ref{sec:proofoverview} that our goal is to construct a simple family whose shatter function is asymptotic to a given semilinear family $\X$. To achieve this, we first construct a family whose component families are flat or half-flat families. We then rely on the following proposition.
\begin{lem}
\label{lem:flattosimple} Suppose $\cR =( R; +, <, (\cdot \lambda)_{\lambda \in D})$ is an ordered vector space over an ordered division ring $D$.
Let $\X$ be a semilinear family in the composite space $V$ indexed by $I$. Supposed that, for every $i \in I$, $\X[i]$ is a flat or half-flat family. Then there is a simple family $\X'$ such that $\pi_{\X'} = \pi_\X$.
\end{lem}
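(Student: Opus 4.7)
The plan is to replace each sub-family $\X[i]$ by its image under a suitable $D$-linear projection of $R^{m_i}$ onto a lower-dimensional Euclidean space, so that flats collapse to points and half-flats collapse to half-lines of the axis-aligned form of Definition~\ref{def:simple}. Because these projections will preserve the combinatorics of traces exactly (not merely asymptotically), the resulting simple family $\X'$ will satisfy $\pi_{\X'}(t) = \pi_{\X}(t)$ for every $t$.

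Fix $i \in I$. If $\X[i] = (Z_b)_{b \in Y}$ is a flat family, every non-empty $Z_b$ is a translate of a fixed $d$-dimensional $D$-linear subspace $L \subseteq R^{m_i}$. Take any $D$-linear surjection $\pi_i : R^{m_i} \to R^{m_i - d}$ with $\ker \pi_i = L$; then $\pi_i(Z_b)$ is a single point $p_b$ depending $D$-linearly on $b$, and $\X'[i] := (\{p_b\})_{b \in Y}$ is a simple family of points. If $\X[i] = (H_b)_{b \in Y}$ is a half-flat family, each $H_b = Z_b \cap \{x \in R^{m_i} : \alpha \cdot x + \beta \cdot b \geq \gamma\}$ for fixed $\alpha \in D^{m_i}$, $\beta \in D^n$, $\gamma \in D$ coming from the total half-flat; after partitioning $Y$ to dispose of the degenerate case $\alpha|_L \equiv 0$ (where $H_b \in \{Z_b, \emptyset\}$ splits semilinearly, reducing to a flat family or the empty family), we may assume $\alpha|_L \neq 0$. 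Let $L' := \ker(\alpha|_L) \subsetneq L$ be the common $(d-1)$-dimensional boundary direction and take a $D$-linear surjection $\pi_i : R^{m_i} \to R^{m_i - d + 1}$ with $\ker \pi_i = L'$; then $\pi_i(Z_b)$ is a line parallel to the fixed $1$-dimensional subspace $\pi_i(L)$, and $\pi_i(H_b)$ is a half of that line. A $D$-linear change of coordinates on $R^{m_i - d + 1}$ making $\pi_i(L)$ the last-coordinate axis, followed if needed by a sign flip of the last coordinate, brings $\pi_i(H_b)$ to the form $\{(f_{i,1}(b), \ldots, f_{i, m'_i - 1}(b), t) : t \geq f_{i, m'_i}(b)\}$ with each $f_{i,k}$ a $D$-linear polynomial in $b$; hence $\X'[i]$ is a simple family of half-lines.

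Assemble $V' := \bigsqcup_i R^{m'_i}$ and $\X' := \bigl(\bigsqcup_i \pi_i(X_b[i])\bigr)_{b \in Y}$. The central observation is that for every $i$, $b$, and $a \in R^{m_i}$ one has $a \in X_b[i]$ if and only if $\pi_i(a) \in X'_b[i]$: the forward implication is trivial; for the reverse, in the flat case $\pi_i(a) = \pi_i(Z_b)$ forces $a - z \in \ker \pi_i = L$ for any $z \in Z_b$, so $a \in Z_b$, while in the half-flat case $\pi_i(a) = \pi_i(h)$ for some $h \in H_b$ gives $a - h \in L' \subseteq L$, so $a \in Z_b$, and moreover $\alpha \cdot (a - h) = \alpha|_L(a - h) = 0$, so $\alpha \cdot a + \beta \cdot b = \alpha \cdot h + \beta \cdot b \geq \gamma$ and $a \in H_b$. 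Consequently, for any finite $A \subseteq V$ with $A' := \bigsqcup_i \pi_i(A[i])$, we get $X_b[i] \cap A[i] = \pi_i^{-1}(X'_b[i] \cap A'[i]) \cap A[i]$ and $X'_b[i] \cap A'[i] = \pi_i(X_b[i] \cap A[i])$; these give mutually inverse bijections between the trace collections $\X \cap A$ and $\X' \cap A'$, so $|\X \cap A| = |\X' \cap A'|$. The equality $\pi_{\X}(t) = \pi_{\X'}(t)$ follows: given $A$ with $|A| = t$, $|A'| \leq t$ yields $|\X \cap A| = |\X' \cap A'| \leq \pi_{\X'}(t)$; conversely, any $A' \subseteq V'$ of size $t$ lifts to some $A \subseteq V$ of size $t$ with $\pi_i(A[i]) = A'[i]$ by choosing one preimage per point, giving $|\X' \cap A'| = |\X \cap A| \leq \pi_{\X}(t)$.

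The main technical obstacle is the half-flat case: one must identify the projection kernel precisely as the boundary direction $L'$ and then adjust the coordinates on $R^{m_i - d + 1}$ (with a possible sign flip of the last coordinate) so that $\pi_i(H_b)$ is an axis-aligned half-line of the exact form required by Definition~\ref{def:simple}. One also has to peel off the degenerate subcase $\alpha|_L \equiv 0$ by a preliminary semilinear partition of $Y$, on each piece of which the family reduces either to a flat family (handled by the first construction) or to the empty family.
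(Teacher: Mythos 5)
Your proof takes essentially the same route as the paper's: for each index $i$ you apply a $D$-linear map to $V[i]$ that collapses the common direction space of the flats (resp.~the common boundary direction of the half-flats), so that flats become points and half-flats become axis-aligned half-lines, and you observe that this map induces an exact bijection on traces, giving $\pi_{\X'}(t)=\pi_{\X}(t)$ for every $t$. The paper phrases the same construction as ``apply a suitable rotation and then project to the last $m_i-d_i$ or $m_i-d_i+1$ coordinates,'' which is your $\pi_i$ in a chosen basis; your verification of the two-sided inequality via the bijection between trace collections is the more carefully argued version of what the paper sketches (the paper's two ``to see $\ldots$'' labels are in fact swapped relative to the inequalities they establish, a typo your write-up avoids).

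One remark on your treatment of the degenerate half-flat case $\alpha|_L\equiv 0$: the fix you propose --- partitioning $Y$ semilinearly and handling each piece separately --- does not actually preserve the \emph{exact} equality $\pi_{\X'}=\pi_{\X}$ demanded by the lemma, since after the split you would have to reassemble several simple families into one, which at best recovers $\pi_{\X'}(t)=\Theta(\pi_{\X}(t))$. The cleaner route (and the one the paper implicitly takes by saying ``$\X[i]$ is a family of $d_i$-dimensional half-flats'') is to note that in every context where the lemma is invoked, the half-flat families come from uniform essential approximations, so each $H_b$ is a genuine half-flat of $Z_b$; this rules out $\alpha|_L\equiv 0$ outright and no partitioning is needed. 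Similarly, both your proof and the paper's tacitly use that each fibre $Z_b$ is nonempty of constant dimension. Aside from this small detour, your argument is correct and matches the paper.
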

\begin{proof}
For each $i \in I$, by applying a suitable rotation to $\X[i]$, we can arrange that either $\X[i]$ is a family of $d_i$-dimensional flats each parallel to the flat $R^{d_i} \times \{0\}^{m_i-d_i}$, or $\X[i]$ is a family of $d_i$-dimension half-flats each parallel to the flat $R^{d_i} \times \{0\}^{m_i-d_i}$ and with boundary parallel to the $(d_i-1)$-dimension flat  $R^{d_i-1} \times \{0\}^{m_i-d_i+1}$.

Let $V'$ be a composite space indexed by $I$ such that $V[i] \heq R^{m_i-d_i}$ if $\X[i]$ is a family of flat and $V[i] \heq R^{m_i-d_i+1}$ if $\X[i]$ is a family of half-flats. Let $\X'$ be family in composite space $V'$ such that $\X'[i]$ is obtained from $\X[i]$ via a projection to the last $m_i - d_i$ or $m_i - d_i + 1$ coordinates as appropriate.

We claim that $\pi_{\X'} = \pi_\X$. To see $\pi_{\X'}(t) \leq  \pi_\X(t)$, consider a point set $A \subseteq V[I]$ with $|A|=t$ and let $A'$ be the set obtained from $A$ by projections as in the preceding paragraph. Observe that $|A'|\leq t$ and that $|\X' \cap A'|= |\X \cap A| \leq \pi_\X(t)$.

To see that $\pi_{\X'}(t) \geq  \pi_\X(t)$, consider a point set $A' \subseteq V'[I]$ with $|A'|=t$. Let $A$ be the set obtained from $A'$ by setting the last $d_i$ or $d_i + 1$ coordinates to $0$ as appropriate. We now have $|A|= t$ and $|\X \cap A| = |\X' \cap A'| \leq \pi_{\X'}(t)$.
\end{proof}

A key property we rely on is that for flat and half-flat families restricting to a generic parameter set does not change the shatter function.

\begin{lem}
\label{lem:flatfamilyrestiction}
Suppose $\cR =( R; +, <, (\cdot \lambda)_{\lambda \in D})$ is an ordered vector space over an ordered division ring $D$.
Let $\X[I]$ be a semilinear family indexed by a seminlinear open cell $Y \subseteq R^n$ such that for every $i \in I$, $\X[i]$ is a flat or half-flat family. Then, for any semilinear  open $U \subseteq Y$, we have $\pi_{\X\upharpoonright U} = \pi_\X$.
\end{lem}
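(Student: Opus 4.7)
The inequality $\pi_{\X \upharpoonright U}(t) \leq \pi_\X(t)$ is immediate from the definition, since every trace of $\X \upharpoonright U$ is a trace of $\X$. For the reverse direction, the plan is to exploit the affine homogeneity of flat and half-flat families: an affine dilation on the parameter space lifts to a compatible affine dilation on each component of the point space that preserves the family. Given $A \subseteq V$ with $|A| = t$ and $|\X \cap A| = \pi_\X(t)$, together with a set $B \subseteq Y$ of representative parameters realizing all traces (so that $b \mapsto X_b \cap A$ is injective on $B$), the goal is to construct an affine bijection $\psi : V \to V$ and an affine map $\phi : R^n \to R^n$ with $\phi(B) \subseteq U$ such that $\psi(X_b) = X_{\phi(b)}$ for every $b$; setting $A' := \psi(A)$ then gives $|A'| = t$ and at least $|B| = \pi_\X(t)$ distinct traces of $\X \upharpoonright U$ on $A'$.

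Construct the maps as follows. Pick $b_0 \in Y$ and $b_0^* \in U$ so that $X_{b_0}[i]$ and $X_{b_0^*}[i]$ are nonempty for every $i$; nondegeneracy here can be arranged by discarding components with identically empty fibers and by partitioning $Y$ if necessary. Take $\lambda > 0$ small enough that $\phi(b) := \lambda(b - b_0) + b_0^*$ maps $B$ into $U$ --- possible because $U$ is open and $B$ is finite. For each $i \in I$, select $a_0^{(i)} \in X_{b_0}[i]$ and $a_0^{*(i)} \in X_{b_0^*}[i]$; in the half-flat case, choose these on the respective boundaries $\bd(X_{b_0}[i])$ and $\bd(X_{b_0^*}[i])$. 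Define $\psi_i(a) := \lambda(a - a_0^{(i)}) + a_0^{*(i)}$ and let $\psi := \bigsqcup_i \psi_i$, which is an affine bijection since $\lambda \neq 0$.

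The heart of the argument is the claim that $\psi_i(X_b[i]) = X_{\phi(b)}[i]$ for every $b \in Y$ and $i \in I$. The combined affine map $\Phi_i := (\psi_i, \phi) : R^{m_i + n} \to R^{m_i + n}$ has linear part $\lambda \cdot \mathrm{id}$, which preserves every linear subspace, and it sends $(a_0^{(i)}, b_0)$ to $(a_0^{*(i)}, b_0^*)$. Since both of these points lie on the total flat or half-flat $Z^{(i)}$ of $\X[i]$, direction-preservation forces $\Phi_i$ to map the flat closure of $Z^{(i)}$ onto itself. In the half-flat case, the boundary-point choices additionally make $\Phi_i$ preserve the boundary flat of $Z^{(i)}$; combined with $\lambda > 0$, which preserves the orientation of the half-flat (i.e.\ which side of the boundary is included), this gives $\Phi_i(Z^{(i)}) = Z^{(i)}$. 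Taking fibers over $b$ then yields the claim.

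From the claim, $X_{\phi(b)} \cap A' = \psi(X_b \cap A)$ for every $b \in B$, so distinct traces on $A$ yield distinct traces on $A'$ realized by parameters $\phi(b) \in U$; hence $\pi_{\X \upharpoonright U}(t) \geq |B| = \pi_\X(t)$. The main obstacle is verifying the claim: in particular, the half-flat case requires careful selection of boundary reference points and positive $\lambda$ so that both the boundary flat and the orientation of the half-flat are preserved by the dilation. The flat case is noticeably easier, since only preservation of the affine span is needed.
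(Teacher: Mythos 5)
Your proposal is correct and takes essentially the same route as the paper: a compatible affine dilation of the parameter space and the point spaces, contracting an arbitrary configuration witnessing $\pi_\X(t)$ into the window $U$. The paper first uses affine transformations to normalize each total (half-)flat into a graph-like form so that a single scalar dilation $v \mapsto (\varepsilon/K)v$ does the job, then scales a witnessing set $A$ and the parameters $b_1, \dots, b_N$ by $\varepsilon/K$; your version skips the normalization and instead builds the dilation $\Phi_i = (\psi_i, \phi)$ directly from reference points $(a_0^{(i)}, b_0)$ and $(a_0^{*(i)}, b_0^*)$ chosen on the total (half-)flats, observing that any affine map with linear part $\lambda\cdot\mathrm{id}$ fixing one point of an affine subspace fixes the whole subspace. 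These are the same idea packaged differently, and they succeed for the same reason: flat and half-flat families are affinely homogeneous. One small point worth being more explicit about: in the half-flat case, if the defining linear inequality of the total half-flat has no dependence on the first block of coordinates (i.e.\ the ``$a$-part'' of the functional vanishes on the fiber flats), then the fiber boundaries $\bd(X_{b_0}[i])$ you want to choose $a_0^{(i)}$ from may be empty. In that degenerate situation the nonempty fibers are full flats and the component behaves as a flat family, so the issue is easily repaired --- and you do signal this with ``partitioning $Y$ if necessary'' --- but it deserves a sentence rather than a parenthetical. The paper's own proof is similarly terse on the half-flat case, so this is not a gap relative to the reference argument.
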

\begin{proof}
That $\pi_{\X\upharpoonright U}(t) \leq \pi_\X(t)$ is immediate from the definitions. We now prove the other direction.
By replacing $Y$ by a larger index set and $U$ by a smaller index set we can assume $Y= R^n$ and $U= [-\varepsilon, \varepsilon]^n$ with $\varepsilon \in R$. Using affine transformations, we can arrange that in each $V[i]$, the family of flats is given by $x_1=f_1(y), x_k=f_k(y), x_{k+1}= 0, \ldots, x_{m_i}=0$ with each $f_i$ a linear map.

Let $A\subseteq V$ with $|A|=t$ such that $|\X \cap A| = \pi_\X(t) = N$. 
Let $b_1, \dots, b_N \in R^n$ be such that $|\X \cap A| = \{ X_{b_1} \cap A, \dots, X_{b_1} \cap A\}$. Let $K$ be such that $b_1, \ldots, b_N \in [-K, K]^n $. Now scale $A[i]$ by $\varepsilon/K$ to get $A'[i]$ and set $A' = \sqcup_i A[i]$. It is straightforward to verify that $A' \cap X_{\varepsilon b_1/K}, \ldots, A' \cap X_{\varepsilon b_N/K}$ are all distinct. This gives us the desired conclusion.
\end{proof}

The following proposition uses the results in Sections~\ref{sec: Decomdecon} to show that the shatter function of a uniform family must be asymptotic to a polynomial. 
\begin{prop}\label{prop:main2}
Suppose $\cR =( R; +, <, (\cdot \lambda)_{\lambda \in D})$ is an ordered vector space over an ordered division ring $D$.
Let $\X$ be a family in the composite space $V$ indexed by $I$. Suppose that, for every $i \in I$, the family $\X[i]$ is uniform. Suppose also that $\X$ is parameterized by a semilinear open cell $Y \subseteq \RR^n$. Then there is $s \in \NN$ such that, for any open $U \subseteq Y$, we have $\pi_{\X\upharpoonright U}(t) = \Theta(t^s)$.
\end{prop}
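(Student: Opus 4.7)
The plan is to induct on a complexity measure of $\X$ (roughly, the maximum dimension of its components combined with the depth of the uniformity recursion) using Propositions~\ref{prop:decomposition} and~\ref{prop:deconstruction} as the reduction steps. In the base case, every $\X[i]$ is either constantly empty, a flat family, or a half-flat family; then Lemma~\ref{lem:flattosimple} converts $\X$ into a simple family with the same shatter function, Theorem~\ref{thm:simpleshatter} delivers $\pi_\X(t) = \Theta(t^s)$ for some $s \in \NN$, and Lemma~\ref{lem:flatfamilyrestiction} ensures that this asymptotic is preserved under restriction to any open $U \subseteq Y$.

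For the inductive step, pick $i_0 \in I$ with $\X[i_0]$ uniform but not of flat/half-flat type. Apply Proposition~\ref{prop:decomposition} if the uniform closure of $\X[i_0]$ has multiple top-dimensional components, and Proposition~\ref{prop:deconstruction} otherwise, producing a new family $\X'$ in an enlarged composite space. Inspecting Definitions~\ref{def:decomposition} and~\ref{def:deconstruction} against the inductive definition of uniformity shows that every component of $\X'$ is again uniform and that the components replacing $\X[i_0]$ are flat families, half-flat families, or uniform families of strictly smaller dimension (the ``leftover'' term from decomposition and the ``residue'' terms $P_{J,b} \cap X_b[i_0]$ or $P_{J,b} \setminus X_b[i_0]$ from deconstruction, whose dimension drop is guaranteed by Lemma~\ref{lem:essentialapproxdim}). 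Thus $\X'$ has strictly smaller complexity, and the inductive hypothesis supplies $s \in \NN$ such that $\pi_{\X'\upharpoonright V}(t) = \Theta(t^s)$ for every open $V \subseteq Y$.

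To transfer this back to $\X$, part (a) of the relevant proposition gives $\pi_\X(t) \leq \pi_{\X'}(r_1 t) = O(t^s)$, and therefore $\pi_{\X\upharpoonright U}(t) \leq \pi_\X(t) = O(t^s)$ for every open $U \subseteq Y$. For the matching $\Omega(t^s)$ lower bound on an arbitrary $U$, observe that $\X\upharpoonright U$ remains uniform (the uniformity conditions are local in the parameter space) and that applying the same operation to $\X\upharpoonright U$ produces exactly $\X'\upharpoonright U$; part (b) of the proposition then furnishes an open subcell $U' \subseteq U$ with $\pi_{\X'\upharpoonright U'}(t) \leq \pi_{\X\upharpoonright U'}(r_2 t)$, while the inductive hypothesis (with $V = U'$) gives $\pi_{\X'\upharpoonright U'}(t) = \Theta(t^s)$. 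Combining these yields $\pi_{\X\upharpoonright U}(t) \geq \pi_{\X\upharpoonright U'}(t) = \Omega(t^s)$, with the same exponent $s$.

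The main technical obstacle is choosing a complexity measure that strictly decreases under a single application of decomposition or deconstruction, since a decomposition alone does not lower any dimension --- it only replaces a multi-flat uniform closure at $i_0$ with several single-flat closures, which still require subsequent deconstruction before becoming flat or half-flat. A lexicographic measure on the multiset $\{\dim(\X[i]) : i \in I, \, \X[i] \text{ is not a flat or half-flat family}\}$, refined by the cardinality of the uniform closure at each such index, should suffice; its well-foundedness and monotonicity under each operation follow by unpacking the definitions in Section~\ref{sec:uniform}.
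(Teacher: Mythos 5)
Your proposal follows the paper's proof in all essentials: the paper likewise runs a terminating reduction process driven by Propositions~\ref{prop:decomposition} and~\ref{prop:deconstruction}, ordered by a lexicographic complexity (top dimension of the closure at an index, number of flat families in that closure, number of maximal indices), terminates when every component is a flat or half-flat family, and then concludes via Lemma~\ref{lem:flattosimple}, Theorem~\ref{thm:simpleshatter}, and Lemma~\ref{lem:flatfamilyrestiction}, transferring the two bounds exactly as you do with parts (a) and (b). Your recasting as an induction whose hypothesis is quantified over all open subsets of the parameter cell is the same mechanism the paper uses when it chains the nested cells $U_m \subseteq \cdots \subseteq U_0 = U$ produced by the window lemmas, so the shape of the argument is identical.

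One step, as you state it, would fail. You trigger deconstruction whenever the uniform closure of $\X[i_0]$ has a single top-dimensional component; but deconstruction (Definition~\ref{def:deconstruction}, together with the standing hypotheses of Section~\ref{sec:deconstruction} and the essential-approximation machinery of Section~\ref{sec:essentialapproxiation}) is only defined when the uniform closure is a \emph{single} flat family, i.e.\ when $\fcl(X_b[i_0])$ is one flat. If the closure consists of one top-dimensional flat family together with additional lower-dimensional ones ($\ell' = 1 < \ell$), neither of your two cases applies as written. The paper's rule --- decompose whenever the closure has more than one flat family, deconstruct only when it is a single flat family --- repairs this, and your complexity measure (dimension refined by closure cardinality, compared as a multiset) still strictly decreases under that rule, since such a decomposition replaces one index carrying closure size $\ell > 1$ by indices of the same dimension with closure size $1$ plus a strictly lower-dimensional leftover. (You also assert, rather than verify, that the leftover component produced by a decomposition is again uniform; the paper glosses this point in exactly the same way, so I do not count it against you.) With the corrected trigger, your argument coincides with the paper's.
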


\begin{proof}
As mentioned in Section~\ref{sec:proofoverview}, the idea here is to construct a simple family $\X'$ whose shatter function is asymptotic to the shatter function of $\X$.

For this purpose, we first need a notion of complexity. For $i \in I$, the complexity of $(I, V, \X)$ at $i$ is set to be $(0, 0)$ if $\X[i]$ is a flat family or a half-flat family. Otherwise, we set the complexity of $(I, V, \X)$ at $i$ to be the pair  $(\alpha_i, \beta_i) \in \NN^2$ defined as follows. Suppose $\X[i]$ has uniform closure $\Z_1, \dots, \Z_\ell$. Then $\alpha_i$ is the maximum, over $k \in [\ell]$, of the dimension of the members of $\Z_k$ (of course, all of which have the same dimension) and $\beta_i = \ell$. Note that this definition is slightly ad-hoc, in the sense that the complexity $(0, 0)$ is exceptional.

The complexity of $(I, V, \X)$ is the triple $(\alpha, \beta, \gamma)$ where $(\alpha, \beta) \in \NN^2$ is the lexicographic maximal $(\alpha_i, \beta_i)$ as $i$ ranges over $I$, and $\gamma$ is the number of indices $i \in I$ such that $(\alpha_i, \beta_i)$ is lexicographically maximal. 

We now build a finite sequence $(I_k, V_k, \X_k)$  inductively as follows. Set $(I_0, V_0, \X_0) := (I, V, \X)$. Suppose we have constructed $(I_k, V_k, \X_k)$ and that the complexity of $(I_k, V_k, \X_k)$ is $(\alpha_k, \beta_k, \gamma_k)$. If $(\alpha_k, \beta_k, \gamma_k) = (0,0, |I|)$ (i.e., for each $i\in I$, the family $\X[i]$ is a flat or half-flat family), the sequences terminates. Otherwise, we construct $(I_k, V_k, \X_k)$ as follows.

Choose $i \in I_k$ such that the complexity of $\X[i]$ is $(\alpha_k, \beta_k)$. Suppose $\beta_k > 1$ (i.e., $\X_k[i]$ has a uniform closure with $\beta_k$ families) then we set $(I_{k+1}, V_{k+1}, \X_{k+1})$ to be the decomposition of $(I_k, V_k, \X_k)$ at index $i$ (as in Definition~\ref{def:decomposition}).  If $\beta_k = 1$, then we set $(I_{k+1}, V_{k+1}, \X_{k+1})$ to be a deconstruction of $(I_k, V_k, \X_k)$ at index $i$ (as in Definition~\ref{def:deconstruction}).

If $(I_{k+1}, V_{k+1}, \X_{k+1})$ was obtained via a decomposition, then we replaced $\X_k[i]$ with families that have a single family as uniform closure, families of flats, and a family whose members have strictly smaller dimension. If $(I_{k+1}, V_{k+1}, \X_{k+1})$ was obtained via a deconstruction, then we replaced $\X_k[i]$ with flat or half-flat families, and families whose members have strictly smaller dimension. In either case, the complexity of $(I_{k+1}, V_{k+1}, \X_{k+1})$ is lexicographically smaller than the complexity of $(I_{k}, V_{k}, \X_{k})$ and the sequence eventually terminates.

Suppose the sequence terminates after $m$ steps and let $(I_m, V_m, \X_m)$ be the resulting triple. For each $i \in I$, $\X_m[i]$ is a flat or half-flat family.
By Lemma~\ref{lem:flattosimple}, there is a simple family $\X'$ such that $\pi_{\X'}(t) = \pi_{\X_m}(t)$ for all $t \geq 1$. Furthermore, $\pi_{\X'}(t) = \Theta(t^s)$ for some $s \in \NN$ by Theorem~\ref{thm:simpleshatter}. Hence, to prove the theorem it suffices to show that there exist $p, q \in \NN$ such that
\begin{equation}
\label{eq:main2shatterfunctionbound}
\pi_{\X_m}(t) \leq \pi_{\X_0}(p t)\quad\text{ and }\quad\pi_{\X_0}(t) \leq \pi_{\X_m}(q t).
\end{equation}

Note first that, for every $k = 1, \dots, m$, there is $r_k \in \NN$ such that $\pi_{\X_{k-1}}(t) \leq \pi_{\X_k}(r_kt)$. 
This follows from Propositions~\ref{prop:decomposition}\ref{itm:decompositiona} and \ref{prop:deconstruction}\ref{itm:deconstructiona}. Combined with the fact that the number of steps depends only on the family $\X$, we obtain
$$ \pi_{\X_0}(t) \leq \pi_{\X_m}(qt)\text{ where }q = \prod_{k=1}^{m} r_k \in \NN.$$
To see the other direction, let $U_0 = U$, and note that, by Propositions~\ref{prop:decomposition}\ref{itm:decompositionb} and \ref{prop:deconstruction}\ref{itm:deconstructionb}, for every $k = 1, \dots, m$, there exist $s_k \in \NN$ and a semilinear open cell $U_k \subseteq U_{k-1}$ such that $\pi_{\X_{k}\upharpoonright U_{k}}(t) \leq \pi_{\X_{k-1}\upharpoonright U_{k}}(s_k t)$. That $\pi_{\X_{k-1}\upharpoonright U_{k}}(t) \leq \pi_{\X_{k-1}\upharpoonright U_{k-1}}(t)$ follows by definition. Hence, we have
$\pi_{\X_{k}\upharpoonright U_{k}}(t) \leq \pi_{\X_{k-1}\upharpoonright U_{k-1}}(s_kt)$ for each $k \in 1, \dots, m$.
By a repeated application of Lemma~\ref{lem:flatfamilyrestiction}, we have $\pi_{\X'}(t) = \pi_{\X'\upharpoonright U_{m}}(t)$. It follows that
$$\pi_{\X_m}(t) \leq \pi_{\X_0}(pt)\text{ where }p = \prod_{k=1}^{m} s_k \in \NN,$$
which completes the proof.
\end{proof}

We require the following fact from~\cite{jl10} (see also \cite{adhms16}*{Theorem~1.1}).

\begin{fact}[\cite{jl10}*{Lemma~2.2}] \label{fact: vcbound}
    Let $(R; \ldots)$ be an o-minimal structure and let $\S$ be a semilinear set system on $R^m$ parametrized by $Y \subseteq R^n$. Then $\vc(\S) \leq n$.
\end{fact}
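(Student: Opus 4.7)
The plan is to bound $|\S \cap A|$ for an arbitrary finite $A \subseteq R^m$ by dualizing: rather than counting intersection patterns cut out by members of $\S$, count the cells in a corresponding arrangement in the parameter space $R^n$. Concretely, fix $A = \{a_1, \dots, a_t\} \subseteq R^m$ and set
\[
Y_i := \{ b \in Y : a_i \in S_b \} = \{ b \in Y : (a_i, b) \in Z \}, \qquad i = 1, \dots, t.
\]
Each $Y_i$ is definable in $\cR$ since $Z$ is, and they are all instances of the same formula $\phi(x,y)$ defining $Z$, with parameters $x = a_i$. The map $Y \to \{0,1\}^t$ sending $b \mapsto (\mathbf{1}_{b \in Y_1}, \dots, \mathbf{1}_{b \in Y_t})$ has image of cardinality exactly $|\S \cap A|$, so it suffices to bound the number of nonempty sign patterns, i.e., cells of the arrangement of $Y_1, \dots, Y_t$ inside $Y$.

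The main step is to prove, by induction on $n$, that any arrangement of $t$ uniformly definable subsets $Y_1, \dots, Y_t \subseteq R^n$ (all instances of one formula $\phi(x,y)$ as $x$ varies) has at most $C t^n$ cells, for a constant $C$ depending only on $\phi$. For $n = 1$, o-minimality of $\cR$ produces a uniform bound $K = K(\phi)$ on the number of connected components of each $Y_i \subseteq R$; hence the arrangement has at most $2Kt + 1$ cells. For the inductive step, invoke the o-minimal cell decomposition theorem: there is a cell decomposition $\mathcal{D}$ of $R^n$, compatible with every $Y_i$, built from graphs and bands of definable functions whose number of pieces grows linearly in $t$ with a constant determined by $\phi$. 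Projecting $\mathcal{D}$ to $R^{n-1}$ via the last-coordinate projection yields a refinement of an arrangement of at most $O(t)$ uniformly definable sets in $R^{n-1}$; the inductive hypothesis bounds the number of cells in the base by $O(t^{n-1})$. Each base cell supports at most $O(t)$ one-dimensional fibers (by the $n=1$ case applied uniformly), so the total count is $O(t^n)$.

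Combining these two paragraphs, $|\S \cap A| \le C t^n$ for a constant $C$ independent of $A$, so $\pi_\S(t) \le C t^n$ and therefore
\[
\vc(\S) = \limsup_{t \to \infty} \frac{\log \pi_\S(t)}{\log t} \le n,
\]
as required. The main obstacle is the bookkeeping inside the inductive step: one must ensure that when passing from the cell decomposition in $R^n$ to the one in $R^{n-1}$, the defining data stays in a uniformly definable family so that the inductive hypothesis applies, and one must account cleanly for both graph cells and open band cells when multiplying the base count by the fiber count. This is a standard application of the uniform cell decomposition for o-minimal structures, which is precisely what makes the Johnson--Laskowski argument go through in this generality.
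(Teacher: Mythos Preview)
The paper does not give its own proof of this statement: it is recorded as a \emph{Fact} and attributed to \cite{jl10}*{Lemma~2.2} (with a pointer also to \cite{adhms16}*{Theorem~1.1}). So there is no in-paper argument to compare against. Your proposal is essentially a sketch of the Johnson--Laskowski argument that the paper is citing: dualize to the parameter space, and bound the number of cells of the arrangement of the fibres $Y_i$ by induction on $n$ using uniform o-minimal cell decomposition. That is the correct strategy and the correct source.

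Two small comments on the sketch itself. First, the statement as written in the paper says ``semilinear'' but is placed in a general o-minimal structure; your argument, correctly, does not use semilinearity anywhere and works for any definable family---which is exactly the level of generality in \cite{jl10}. Second, your inductive step is where all the content lives, and you have identified but not discharged the bookkeeping: one must invoke the \emph{uniform} cell decomposition theorem (so that the number of cells over each base cell, and the number of base cells themselves, are bounded by constants depending only on the formula $\phi$, times the appropriate power of $t$), and one must check that the projected data in $R^{n-1}$ again come from a single uniformly definable family so that the inductive hypothesis applies. This is standard and is precisely what is carried out in \cite{jl10}; your write-up would be complete once you cite or reproduce that uniform-finiteness statement rather than leaving it as an acknowledged obstacle.
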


Finally, Theorem~\ref{thm:main} is the special case of Theorem~\ref{thm:mainversion1general} below where $D=R = \RR$.

\begin{thm} \label{thm:mainversion1general}
Let $\mathfrak{R} = (R; +, <, (\cdot \lambda)_{\lambda \in D} )$ be an ordered vector space over a division ring $D$ where $(R; <)$ is dense.
Let $\S$ be a semilinear set system on $R$ parameterized by $Y \subseteq R^n$. Then there exist constants $C_1, C_2 > 0$ and an integer $0 \leq s \leq n$ such that
\[ C_1 t^{s} < \pi_\S(t) < C_2 t^{s} \quad \text{for every } t \geq 1. \]
That is, $\pi_\S(t) = \Theta(t^{s})$ (as $t \rightarrow \infty$).
\end{thm}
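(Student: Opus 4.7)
The plan is to combine the uniformization machinery of Section~\ref{sec: Uniformization} with Proposition~\ref{prop:main2} and the VC-dimension bound in Fact~\ref{fact: vcbound}. Let $\X = (X_b)_{b \in Y}$ denote the semilinear family associated to $\S$, regarded as a family in the composite space $V = R^m$ with trivial index set $I = \{1\}$; the shatter function is unchanged by this reformulation.

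First, I would apply Proposition~\ref{prop:uniformization} to partition $Y$ into finitely many semilinear sets $Y_1, \ldots, Y_k$ such that $\X \upharpoonright Y_i$ is uniform for each $i$. Next, using semilinear cell decomposition in the o-minimal structure $\cR$, I would refine this partition so that each piece is a semilinear cell. Each cell $C$ of dimension $d$ is in semilinear definable bijection (via a coordinate projection composed with an affine map) with an open semilinear cell in $R^d$, and this reparametrization preserves the shatter function of $\X \upharpoonright C$. Re-applying Proposition~\ref{prop:uniformization} to each reparametrized family yields a finite collection of uniform families $\X^{(1)}, \ldots, \X^{(N)}$, each parametrized by an open semilinear cell in some $R^{d_j}$ with $d_j \leq n$, and satisfying
\[ \pi_\X(t) \;\leq\; \sum_{j=1}^N \pi_{\X^{(j)}}(t) \quad \text{and} \quad \pi_\X(t) \;\geq\; \max_{j \in [N]} \pi_{\X^{(j)}}(t). \]

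By Proposition~\ref{prop:main2}, for each $j$ there exists an integer $s_j \in \NN$ such that $\pi_{\X^{(j)}}(t) = \Theta(t^{s_j})$. Setting $s = \max_j s_j$, the upper bound yields $\pi_\X(t) \leq C_2 t^s$ for some $C_2 > 0$, while the lower bound gives $\pi_\X(t) \geq C t^s$ for some $C > 0$ and all sufficiently large $t$; shrinking the constant absorbs small values of $t$ (noting $\pi_\X(t) \geq 1$ whenever $Y \neq \emptyset$). Finally, the bound $s \leq n$ is immediate from Fact~\ref{fact: vcbound}, which says $\vc(\S) \leq n$.

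The step I expect to require the most care is the reparametrization of lower-dimensional cells, since Proposition~\ref{prop:main2} demands that the parameter set be an open semilinear cell. Here one must confirm (a) that a semilinear cell of dimension $d < n$ admits a semilinear homeomorphism to an open cell in $R^d$, and (b) that the resulting reparametrized family remains semilinear, so that Proposition~\ref{prop:uniformization} and Proposition~\ref{prop:main2} may be re-applied. Both facts are standard consequences of cell decomposition in the semilinear setting; once they are in hand, the argument assembles routinely from the tools established earlier in the paper.
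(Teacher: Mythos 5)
Your proposal is correct and follows essentially the same route as the paper: uniformize via Proposition~\ref{prop:uniformization}, reduce to an open-cell parameter space via semilinear cell decomposition (the paper's Lemma~\ref{lem:cubeparameterspace} handles the reparametrization you flag as delicate), apply Proposition~\ref{prop:main2} to each piece, combine via the sum/max bounds, and get $s \leq n$ from Fact~\ref{fact: vcbound}. Your extra step of re-applying uniformization after reparametrizing is harmless and, if anything, makes explicit a point the paper's brief proof glosses over.
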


\begin{proof}
Let $\X = (X_b)_{b\in Y}$ be the semilinear family where $\S(\X)= \S$. We need to show $\pi_\X$ is assymptotic to a polynomial. We note that when $\{Y_1, \ldots Y_k \}$ is a partition of $Y$ and $\X_i = \X\upharpoonright Y_i$ is the restricted family for $i \in [k]$, then 
$$ \pi_{\X}(t) = \Theta\left( \sum_{i \in [k]} \pi_{\X}(t) \right).  $$
Hence, using Proposition~\ref{prop:uniformization}, we can assume that $\X$ is uniform. By Fact~\ref{fact:semilinearfiniteuntion} (semilinear cell decomposition) and the definition of semilinear cells, we can further arrange that $Y \subseteq \RR^n$ is an open cell. By Proposition~\ref{prop:main2}, $\pi_\S(t)$ is asymptotic to a polynomial. The desired conclusion follows from Fact~\ref{fact: vcbound}.
\end{proof}

% Proposition~\ref{prop:main2} for $(\RR, \times, <)$ is from the fact that any o-minimal expansion of $(\RR; \times, <)$ is isomorphic via the exponential map to an o-minimal expansion of $(\RR^{>0}; +, <)$ which is interdefinable with an o-minimal expansion of $(\RR; +, <)$.   

Toward proving a generalization of Theorem~\ref{thm:mainversion2}, we need the following fact.

\begin{fact}[\cite{adhms16}, Proposition 4.6] \label{fac: vcnonint}
    There is a definable family $\X$ in $\cR$ such that $\pi_\X(t)  = \Theta( t^{4/3})$.
\end{fact}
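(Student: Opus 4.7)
The plan is to exhibit a concrete definable family realising the Szemer\'edi--Trotter bound. Guided by the example in the introduction, I would set
\begin{equation*}
Y = \{(b_1, b_2, b'_1, b'_2) \in R^4 : b_1 b'_1 + b_2 b'_2 = 1\}
\end{equation*}
and, for $b = (b_1, b_2, b'_1, b'_2) \in Y$, $X_b = \{(b_1,b_2), (b'_1,b'_2)\} \subseteq R^2$. The family $\X = (X_b)_{b \in Y}$ is definable in $\cR$ provided multiplication is available; this is exactly the setting relevant to Theorem~\ref{thm:mainversion2}, since by Zilber's trichotomy an o-minimal expansion of $(\RR;+,<)$ that is \emph{not} weakly locally modular interprets a real closed field, which is enough to define the inner product condition.

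The first main step is to reduce $\pi_{\X}(t)$ to the Szemer\'edi--Trotter incidence function. Since every member of $\S(\X)$ has size two, I would invoke \cite{adhms16}*{Lemma~4.1} (as recalled at the start of Section~\ref{sec:proofoverview}) to obtain $\pi_{\X}(t) = \Theta(\delta_G(t))$, where $G = (R^2, R^2; E)$ is the bipartite graph with $E = \{(a, a') : a \cdot a' = 1\}$; the additive linear error term is absorbed because $\X$ is infinite. The standard point-line duality $(b_1,b_2) \mapsto \{(x_1,x_2) : b_1 x_1 + b_2 x_2 = 1\}$ converts edges of $G[V]$ into incidences between a set of $t$ points and a set of $t$ lines in $R^2$, so $\delta_G(t) = \Theta(I(t))$, where $I(t)$ denotes the maximum number of such incidences.

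The second step applies the Szemer\'edi--Trotter theorem~\cite{st1983}, which gives $I(t) = O(t^{4/3})$, together with the classical matching lower bound construction (for example, dualising the $m \times m^2$ integer grid of points against a suitably chosen family of lines) yielding $I(t) = \Omega(t^{4/3})$. Putting the two steps together gives $\pi_{\X}(t) = \Theta(t^{4/3})$, as required.

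The main conceptual obstacle is the availability of multiplication: the example genuinely depends on the bilinear form $b_1 b'_1 + b_2 b'_2$, and in a purely additive structure no such family could exist (indeed, that is essentially what Theorem~\ref{thm:mainversion1general} rules out). The genuinely hard combinatorial content lies entirely inside Szemer\'edi--Trotter, whose proof (via the crossing lemma or cuttings) is orthogonal to the model-theoretic setup; from our point of view it may be treated as a black box.
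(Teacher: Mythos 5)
Your proposal is correct and is essentially the same argument the paper relies on: the statement is imported as a black-box Fact from \cite{adhms16}*{Proposition 4.6}, and the paper's introduction sketches exactly your route --- the unit-inner-product family of point pairs, the reduction $\pi_\X(t)=\Theta(\delta_G(t))$ via \cite{adhms16}*{Lemma 4.1}, point-line duality, and then Szemer\'edi--Trotter with the matching grid lower bound. Your remark that the construction needs a definable real closed field (rather than the bare ordered group) is also the right reading of how the Fact is actually invoked in the proof of Theorem~\ref{thm:mainversion2general}.
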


Theorem~\ref{thm:mainversion2} is the special case of Theorem~\ref{thm:mainversion2general} below with $R=\RR$.

\begin{thm} \label{thm:mainversion2general}
Let $\mathfrak{R}= (R; +, <, \ldots)$ be an o-minimal structure expanding a divisible ordered abelian group. Then the following are equivalent:
\begin{enumerate}[label = (\alph*)]
    \item $(R; +, <, \ldots)$ is weakly locally modular;
    \item for any set system $\S$ definable in $(\RR; \ldots)$, $\pi_\S(t)$ is assymptotic to a polynomial.
\end{enumerate}
\end{thm}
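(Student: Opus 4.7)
The plan is to prove the two implications separately.

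For (b) $\Rightarrow$ (a), I will argue the contrapositive. Suppose $\cR$ is not weakly locally modular. By Zilber's trichotomy for o-minimal structures (Fact~\ref{fac: o-min trich}), $\cR$ then interprets a real closed field on some definable open set. Inside this interpreted real closed field, Fact~\ref{fac: vcnonint} supplies a definable family $\X$ with $\pi_\X(t) = \Theta(t^{4/3})$. Since $4/3$ is not an integer, this shatter function cannot be asymptotic to any polynomial, contradicting~(b).

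For (a) $\Rightarrow$ (b), the plan is to invoke the structural characterization of weakly locally modular o-minimal expansions of a divisible ordered abelian group provided by Facts~\ref{fac: vectorislinear} and~\ref{fact: Onebasedorderstructure}: such an $\cR$ is a reduct (up to definable equivalence) of an ordered vector space over some ordered division ring $D$. Under this identification, every $\cR$-definable set is semilinear in the sense of Section~\ref{sec:simple}, so for any $\cR$-definable set system $\S$ parametrized by $Y \subseteq R^n$, Theorem~\ref{thm:mainversion1general} applies directly to yield $\pi_\S(t) = \Theta(t^s)$ for some integer $0 \leq s \leq n$, which is asymptotic to a polynomial.

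The main obstacle will be the exact form of the structural characterization. If Facts~\ref{fac: vectorislinear} and~\ref{fact: Onebasedorderstructure} give a clean global reduction of $\cR$ to an ordered vector space, then the deduction above is immediate. If instead the characterization only yields a local description (for instance, that on a neighborhood of each point $\cR$ agrees with an ordered vector space structure), then an additional cell-decomposition step is required: partition $Y$ into finitely many semilinear cells $Y_1, \ldots, Y_k$ on each of which the restricted family $\X \upharpoonright Y_i$ becomes semilinear over a common ordered vector space, apply Theorem~\ref{thm:mainversion1general} to obtain $\pi_{\S \upharpoonright Y_i}(t) = \Theta(t^{s_i})$ for each $i$, and combine via
\[ \pi_\S(t) = \Theta\!\left(\sum_{i \in [k]} \pi_{\S \upharpoonright Y_i}(t)\right) = \Theta\!\left(t^{\max_i s_i}\right), \]
reusing the same additivity observation already used in the proof of Theorem~\ref{thm:mainversion1general}.
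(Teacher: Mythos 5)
Your route coincides with the paper's: you prove (b)$\Rightarrow$(a) exactly as the paper does, via Fact~\ref{fac: o-min trich} and Fact~\ref{fac: vcnonint}, and (a)$\Rightarrow$(b) via linearity and Theorem~\ref{thm:mainversion1general}. The one genuine issue is how you invoke the structural characterization in the forward direction. Fact~\ref{fact: Onebasedorderstructure} does not say that $\cR$ itself is (up to interdefinability) a reduct of an ordered vector space; it says that some \emph{elementary extension} $\cR'$ of $\cR$ is a reduct of an ordered vector space over an ordered division ring. So the ``clean global reduction'' you hope for is not literally available, and your fallback plan --- partitioning the parameter set into cells on which the family becomes semilinear --- addresses a different contingency (a local description) that does not arise and would not repair this mismatch. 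What is needed instead, and what the paper does, is to pass to such an elementary extension $\cR'$, where every definable set is semilinear over the (possibly larger) division ring, apply Theorem~\ref{thm:mainversion1general} there, and then transfer the conclusion back to $\cR$. The transfer rests on the observation that the shatter function of a definable family is unchanged under elementary extension: for each $t$ and $k$, the existence of a $t$-element set on which the family has at least $k$ distinct traces is a first-order condition in the defining formula, so $\pi_\S$ is determined by the theory together with the formula. (A similar remark is implicitly used to apply Fact~\ref{fac: o-min trich}, which is stated for $\aleph_1$-saturated models, and to note that weak local modularity is a property of the theory; the paper glosses over this at the same level you do, so I do not count it against you.)

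With that single substitution --- elementary extension plus invariance of $\pi_\S$, in place of your cell-decomposition fallback --- your argument is complete and is essentially the paper's proof of Theorem~\ref{thm:mainversion2general}.
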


\begin{proof}

We first prove the forward implication. Suppose $(R; +, <, \ldots)$ is weakly locally modular. Then it is linear by Zilber's trichotomy (Fact~\ref{fac: o-min trich}). Hence, by passing to an elementary extension and Fact~\ref{fact: Onebasedorderstructure}), we can assume that $\cR$ is an ordered vector space of a potentially larger ordered division ring $D$. Then the desired conclusion follows from Theorem~\ref{thm:mainversion1general}.

Now we prove the backward implication. Suppose $(R; +, <, \ldots)$ is not weakly locally modular. Then by Zilber's trichotomy (Fact~\ref{fac: o-min trich}), there is a real closed field definable in $(R; +, <, \ldots)$. Hence, by Fact~\ref{fac: vcnonint}, one can get a definable family with vc-dimension $4/3$, so its shatter function is not assymptotic to a polynomial.
\end{proof}

One can show the same result for other o-minimal structures by relating it to structures in~\ref{thm:mainversion2general}. Below is an example.

\begin{cor}
\label{cor:multversion}
Suppose $\cR =(R; \times, <, \ldots)$ is an o-minimal structure expanding
a model of $\mathrm{Th}(\RR; \times, <)$ 
\begin{enumerate}[label = (\alph*)]
    \item $(R; \ldots)$ is weakly locally modular;
    \item for any set system $\S$ definable in $(\RR; \ldots)$, $\pi_\S(t)$ is assymptotic to a polynomial.
\end{enumerate}
\end{cor}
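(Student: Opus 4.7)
The plan is to reduce Corollary~\ref{cor:multversion} to Theorem~\ref{thm:mainversion2general} by transferring from the multiplicative setting on $R$ to a group structure on the positive cone $R_{>0}$. In any model $\cR$ of $\mathrm{Th}(\RR; \times, <)$, the set $R_{>0}$ is definable, and $(R_{>0}; \times, <)$ is a divisible ordered abelian group --- divisibility is a first-order schema that holds in $\RR$ via the isomorphism $\log : (\RR_{>0}, \times, <) \to (\RR, +, <)$, so it holds in every model. Moreover, $-1$ is definable as the unique negative element $y$ with $y \cdot y = 1$, hence negation is definable, and $R$ decomposes definably as $R_{<0} \sqcup \{0\} \sqcup R_{>0}$ with a definable bijection $R_{<0} \to R_{>0}$ given by $x \mapsto -x$.

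Let $\cR' = (R_{>0}; \times, <, \ldots')$ denote the induced structure on $R_{>0}$, whose language consists of the restrictions to $R_{>0}^k$ of all $\cR$-definable predicates. Then $\cR'$ is an o-minimal expansion of a divisible ordered abelian group, so Theorem~\ref{thm:mainversion2general} applies to it. Next, I would set up a bi-interpretation between $\cR$ and $\cR'$: encode $R$ as a definable subset of $R_{>0}^2$ by sending $x \in R$ to $(|x|, \sigma(x))$, where $\sigma(x) \in \{1,2,3\} \subseteq R_{>0}$ records the sign of $x$, and verify that the $\cR$-operations become $\cR'$-definable under this encoding. Under this bi-interpretation, weak local modularity transfers: by Zilber's trichotomy (Fact~\ref{fac: o-min trich}), it is equivalent to the non-interpretability of a real closed field, which is a bi-interpretation invariant. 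Furthermore, any set system $\S$ on $R^m$ definable in $\cR$ corresponds to a set system $\tilde{\S}$ on (a definable subset of) $R_{>0}^{2m}$ definable in $\cR'$, and since the encoding is a bijection on ground sets, $\pi_\S(t) = \pi_{\tilde{\S}}(t)$.

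Combining these two transfers gives the equivalence in the expected way: (a) holds for $\cR$ iff $\cR'$ is weakly locally modular iff (by Theorem~\ref{thm:mainversion2general}) every $\cR'$-definable set system has shatter function asymptotic to a polynomial iff (via the bijection) (b) holds for $\cR$. For the backward direction I would specifically invoke the contrapositive using Fact~\ref{fac: vcnonint}: non-weak-local-modularity of $\cR$ yields a real closed field definable in $\cR'$, producing a $\cR'$-definable family with shatter function of order $t^{4/3}$, which transfers back to $\cR$ and contradicts (b). The main obstacle is essentially administrative: carefully setting up the encoding of $R$ within $R_{>0}^2$ and verifying that the induced map on set systems preserves shatter functions up to a constant, so that the polynomial-asymptotic conclusion of Theorem~\ref{thm:mainversion2general} genuinely transfers from $\cR'$-definable to $\cR$-definable set systems without inflating or deflating the exponent.
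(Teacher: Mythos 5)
Your proposal follows essentially the same route as the paper: pass to the positive cone $(R_{>0};\times,<)$, observe it is a divisible ordered abelian group, set up a bi-interpretation between $\cR$ and the induced structure $\cR'$ on $R_{>0}$, transfer weak local modularity (via Zilber's trichotomy / Fact~\ref{fac: o-min trich}) and shatter functions across the bi-interpretation, and apply Theorem~\ref{thm:mainversion2general}. The paper compresses all of this into two sentences (and in doing so introduces a typo, writing ``$(R;+,\times)$'' where it means the positive multiplicative cone), so your more explicit account is actually the clearer one; the only small wrinkle you should patch is that the proposed coding $x\mapsto(|x|,\sigma(x))$ sends $0$ to $(0,\cdot)$ with $0\notin R_{>0}$, so the point $0$ needs a separate fixed target, but as you note this is purely administrative.
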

\begin{proof}
We note that $(R; +, \times)$ is an ordered abelian group, and $\mathrm{Th}(\RR; \times, <, \ldots)$ is interdefinable in an expansion of the ordered abelian group $(R^{>0}; \times, <)$. Hence, both cases of this statement follows from the preceding theorem.
\end{proof}

\bibliographystyle{abbrv}
\bibliography{references}

\appendix

\section{Model theoretic preliminaries}
\label{app:modelprelim}

Throughout this section, let $\cR= (R; <, 0, 1, +, (\lambda\cdot)_{\lambda \in D})$ be a vector space over an ordered division ring $D$. We view $\cR$ as a structure in the language $L = \{<, 0, 1, +, (\lambda\cdot)_{\lambda \in D}\}$.

\subsection{Linear inequalities and semilinear formulas} \label{App:A1}

 The material in this section is based on~\cite{vandries1998}*{Section~1.7}, where $D$ is taken to be an ordered field. However, the results remain valid for ordered division rings, as noted in Remark~\cite{vandries1998}*{Section~1.7.10}.

A linear polynomial in $\cR$ (or, more precisely, in $L$) is an $L$-term $\lambda(\bar{x})$ with $\bar{x}=(x_1, \ldots, x_m)$ of the form $\lambda_1x_1+ \ldots+ \lambda_m x_m$ with with $\lambda_1, \dots, \lambda_m \in D$. As usual, we will write $x \leq y$ for $(x<y) \vee (x=y)$. A \emph{linear inequality in $\cR$} is an $L(R)$-formula $\phi(\bar{x})$ of the form
 $$\lambda(\bar{x}) <  c  \quad\text{or} \quad  \lambda(\bar{x}) \leq   c $$
with $\lambda(\bar{x})$ a linear polynomial and $c \in R$. If $c = \lambda'_1 b_1 + \dots \lambda'_n b_n$ for some $\lambda'_1, \dots, \lambda'_n \in D$ and $b_1, \ldots, b_n \in B \subseteq R$, then we say $\phi$ is a \emph{linear inequality over $B$}.

A \emph{system of linear inequalities} is a conjunction of linear inequalities and a \emph{semilinear} formula is a disjunction of systems of linear inequalities. If the inequalities in a system or a semilinear formula are over $B$, then we say the system of inequalities or the semilinear formula is over $B$.  A subset of $R^m$ is \emph{semilinear over $B$} if it is the solution set of (i.e., the set defined by) a semilinear formula over $B$.
\begin{fact}[\cite{vandries1998}*{Corollary 1.7.8}] \label{fact:semilineardefinability}
$X \subseteq R^m$ is definable in $\cR$ over $B\subseteq R$ if and only if it is  semilinear over $B$.
\end{fact}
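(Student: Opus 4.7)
The plan is to derive this from quantifier elimination (QE) for the $L$-theory of $\cR$. The implication from semilinear to $L(B)$-definable is immediate, since every semilinear formula over $B$ is by construction an $L(B)$-formula. For the converse, once QE is established, any $L(B)$-definable set is defined by a quantifier-free $L(B)$-formula; after passing to disjunctive normal form and replacing each atomic equation $p(\bar{x}) = c$ by the pair of non-strict inequalities $p(\bar{x}) \leq c$ and $c \leq p(\bar{x})$, and each negated atomic formula by the appropriate disjunction $\neg(s < t) \leftrightarrow (t \leq s)$, we obtain a disjunction of systems of linear inequalities over $B$ --- that is, a semilinear formula over $B$. Parameter control is automatic: an $L(B)$-formula becomes $L$ when we name the parameters by fresh variables and substitute back at the end.

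For QE itself, by a standard criterion it suffices to show that for any quantifier-free $L$-formula $\psi(x, \bar{y})$, the formula $\exists x\,\psi(x, \bar{y})$ is $\cR$-equivalent to a quantifier-free formula. Distributing $\exists$ over disjunctions, we reduce to $\psi$ being a conjunction of literals. Using $\neg(s = t) \leftrightarrow (s<t)\vee(t<s)$ and $\neg(s<t) \leftrightarrow (t<s)\vee(s=t)$ and distributing again, we may take $\psi$ to be a conjunction of literals of the form $\lambda x + \mu_i(\bar y) = 0$ or $\lambda_j x + \nu_j(\bar y) < 0$, where $\lambda,\lambda_j \in D$ (possibly zero) and the $\mu_i, \nu_j$ are linear polynomials. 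If some equation has $\lambda \neq 0$, substitute $x = -\lambda^{-1}\mu_i(\bar y)$ into all other conjuncts. Otherwise, each literal either does not involve $x$ or, after dividing by $\lambda_j$ (using the sign of $\lambda_j$ to switch the inequality), takes the form $\alpha_p(\bar y) < x$ or $x < \beta_q(\bar y)$. Then $\exists x\,\psi$ is equivalent to the conjunction of the $x$-free literals with $\bigwedge_{p,q} \alpha_p(\bar y) < \beta_q(\bar y)$, provided $<$ is dense; density on $R$ follows from divisibility, since for any $a<b$ the element $2^{-1}(a+b)$ lies strictly between them. If there are bounds only on one side, or no bounds at all, the existential is vacuously satisfied, so the conjunction is just the trivially true formula together with the $x$-free literals.

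The main obstacle is the bookkeeping in the QE step: carefully tracking which coefficients of $x$ vanish, handling the case split between equations and bounds, and correctly swapping strict with non-strict inequalities when multiplying by scalars of either sign (this is where one uses that $D$ is an \emph{ordered} division ring, so every nonzero scalar has a well-defined sign and we may divide through). Once this elimination is in place, the translation in the first paragraph finishes the proof, and the parameters appearing in the resulting semilinear formula are exactly those appearing in the original $L(B)$-formula, as required.
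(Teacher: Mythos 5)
The paper does not prove this statement; it imports it as a Fact, citing van den Dries~\cite{vandries1998}*{Corollary 1.7.8}, and the appendix explicitly notes that the material is adapted from Section~1.7 of that book and remains valid for ordered division rings by~\cite{vandries1998}*{Remark 1.7.10}. Your Fourier--Motzkin quantifier-elimination argument, followed by normalization of quantifier-free formulas into disjunctions of systems of linear inequalities, is correct and is essentially the standard proof from the cited source, with appropriate attention to the places where the ordered division ring structure is used (existence and sign of inverses for dividing through, and $2^{-1}$ for density).
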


The following is a straightforward consequence of quantifier elimination for the theory of $\cR$ (see, e.g.,~\cite{vandries1998}*{Corollary 1.7.8}).
\begin{fact} \label{fact:linearequationover parameter}
Suppose $X$ is the solution set of a semilinear formula in $\cR$ and that $X$ is definable over $B\subseteq R$ in the sense of model theory. Then $X$ is the solution set of a semilinear formula over $B$.
\end{fact}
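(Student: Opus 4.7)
The plan is to deduce the statement directly from quantifier elimination for the theory of ordered vector spaces over an ordered division ring, which is part of the cited Corollary~1.7.8 in van den Dries. Since $X$ is assumed to be definable over $B$ in the model-theoretic sense, by definition there is an $L(B)$-formula $\phi(\bar{x})$ such that $X = \phi(\cR)$. Applying quantifier elimination to $\phi$, we obtain a quantifier-free $L(B)$-formula $\psi(\bar{x})$ equivalent to $\phi$ in $\cR$, so that $X = \psi(\cR)$ as well. The hypothesis that $X$ was already known to be semilinear over some larger parameter set is not really needed here: it is subsumed by Fact~\ref{fact:semilineardefinability} and is only included for motivation.

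Next I would unwind what a quantifier-free $L(B)$-formula looks like. By definition it is a Boolean combination of atomic formulas $t_1(\bar{x}) \ \square \ t_2(\bar{x})$ with $\square \in \{=, <\}$ and the $t_i$ $L(B)$-terms. Because $L$ only contains $+$, scalar multiplication, the constants $0, 1$, and $<$, every $L(B)$-term in $\bar{x}=(x_1, \ldots, x_m)$ simplifies to $\lambda(\bar{x}) + c$ for some linear polynomial $\lambda$ with coefficients in $D$ and some constant term $c$, which in turn can be written as $\lambda'_1 b_1 + \cdots + \lambda'_k b_k$ with $\lambda'_j \in D$ and $b_j \in B$ (using the constants $0, 1$ and, when needed, the scalar multiplications to absorb integer coefficients). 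Consequently each atomic subformula of $\psi$ is equivalent to one of $\mu(\bar{x}) = c'$, $\mu(\bar{x}) < c'$, with $\mu$ a linear polynomial and $c'$ of the above form.

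Finally, I would convert $\psi$ into the specific shape used in the paper's definition of semilinear formula, namely a disjunction of conjunctions of inequalities of the form $\lambda(\bar{x}) < c$ or $\lambda(\bar{x}) \leq c$. This is routine: equalities $\mu(\bar{x}) = c'$ are rewritten as $\mu(\bar{x}) \leq c' \wedge -\mu(\bar{x}) \leq -c'$; negations of strict inequalities become non-strict inequalities (with signs flipped), negations of non-strict inequalities become strict ones, and negations of equalities become disjunctions of strict inequalities. After replacing each literal by such a form one has a Boolean combination built from $\wedge, \vee$ of the allowed linear inequalities over $B$, which, after distributing $\wedge$ over $\vee$ into disjunctive normal form, is precisely a semilinear formula over $B$. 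This formula defines $X$, as required.

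The only mild obstacle is bookkeeping to ensure we land in the restricted syntactic form (only $<$ and $\leq$ on the right of a constant, disjunction of conjunctions); but this is purely formal manipulation and contains no real content beyond the quantifier elimination input.
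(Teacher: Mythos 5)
Your proof is correct and follows exactly the approach the paper indicates (it offers no proof, only the remark that the fact is a ``straightforward consequence of quantifier elimination'' together with the citation to van den Dries, Corollary~1.7.8). You correctly observe that the first hypothesis is redundant given Fact~\ref{fact:semilineardefinability}, and the QE-plus-normalization argument you supply (simplify $L(B)$-terms to linear polynomials, eliminate $=$ and negations in favor of $<$ and $\leq$, distribute into disjunctive normal form) is precisely the routine unwinding the paper is alluding to.
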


A \emph{linear equation in $\cR$} is an $L(R)$-formula $\phi(\bar{x})$ of the form
 $$\lambda(\bar{x}) = c, $$
with $\lambda(\bar{x})$ a linear polynomial. If $c = \lambda'_1 b_1 + \dots \lambda'_n b_n$ for some $\lambda'_1, \dots, \lambda'_n \in D$ and $b_1, \ldots, b_n \in B \subseteq R$, then we say $\phi$ is a \emph{linear equation over $B$}.

A \emph{system of linear equations} in $\cR$ is a conjunction of linear equations in $\cR$ and a \emph{linear formula} is
a disjunction of systems of linear equations in $\cR$. If the equations in a linear formula are over $B$, then we say the system or the linear formula is over $B$.  We say that $X \subseteq R^m$ is a \emph{flat} (in $\cR$) over $B\subseteq R$ if $X$ is the solution set of a system of linear equations over $B$.  Clearly, a flat over $B$ is semilinear over $B$.

Suppose $Z \subseteq R^m$ is a flat. We say that $H \subseteq Z$ is  \emph{half-flat}  if $H \neq \emptyset$, $H\neq Z$, and $H$ is the solution set of a linear inequality $\phi(x_1, \dots, x_m)$. That is, $H$ can be written as $$\{ (a_1, \ldots, a_m) \in Z :  \lambda_1a_1 +\ldots \lambda_ma_m \ \square \ c \}$$
with $\lambda_i \in D$, $ c \in R$, and $ \square \in \{ <, \leq\}$. 
If $Z$ is a flat over $B$ and $\phi$ is a linear equality over $B$, then we say $H$ is a half-flat over $B$.
Clearly, a half-flat $H \subseteq Z$ is open, in the Euclidean topology on $\cR$, if it is defined by a strict linear inequality and closed otherwise.

\subsection{Semilinear cell decompositions}
\label{sec:semilinearcell}
Here, we adapt the definitions of a cell in an o-minimal structure and cell decompositions (see \cite{vandries1998}*{Chapter 3.2}) to our setting. 

For every definable set $X \subseteq R^m$, set
$$L(X) := \left\{ f: X \rightarrow R : f\text{ is linear}\right\} \quad\text{ and }\quad
L_\infty(X) := L(X) \cup \{-\infty, +\infty\},$$
where we regard $-\infty$ and $+\infty$ as constant functions on $X$.

For $f, g \in L_\infty(X)$ we write $f < g$ if $f(x) < g(x)$ for every $x \in X$, and in this case we put
$$(f, g)_X := \{(x,r) \in X \times R: f(x) < r < g(x)\}. $$
So $(f, g)_X$ is a semilinear subset of $R^{m+1}$. 

Let $(i_1, \dots, i_m)$ be a sequence of zeroes and ones of length $m$. An \emph{$(i_1, \dots, i_m)$-cell} is a semilinear subset of $R^m$ obtained by induction on $m$ as follows:
\begin{enumerate}
    \item a null-cell is the one-element set $R^0$, a $(0)$-cell is a one-element set $\{r\} \subseteq R$ (a ``point''), and a $(1)$-cell is an interval $(a, b) \subseteq R$, and $a, b \in R \cup \{-\infty, \infty\}$;
    \item suppose $(i_1, \dots, i_m)$-cells are already defined; then an $(i_1, \dots, i_m, 0)$-cell is the graph $\Gamma(f)$ of a linear function $f \in C(X)$, where $X$ is an $(i_1, \dots, i_m)$-cell; an $(i_1, \dots, i_m, 1)$-cell is  a set $(f, g)_X$, where $X$ is an $(i_1, \dots, i_m)$-cell and $f, g \in C_\infty(X), f < g$.
\end{enumerate}
We will refer to an $(i_1, \dots, i_m)$-cell as a \emph{semilinear cell}. The following is a consequence of the o-minimal cell decomposition theorem along with the fact that a definable function in $\cR$ is piecewise linear (see~\cite{vandries1998}*{Corollary 1.7.6}).
\begin{fact}
\label{fact:semilinearfiniteuntion}
Every semilinear set is a disjoint finite union of semilinear cells.
\end{fact}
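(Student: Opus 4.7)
The plan is to deduce this as a direct consequence of the o-minimal cell decomposition theorem applied to $\cR$, refined by the fact that every definable function in $\cR$ is piecewise linear (\cite{vandries1998}*{Corollary 1.7.6}). By Fact~\ref{fact:semilineardefinability}, semilinear sets are exactly the $\cR$-definable sets, and $\cR$ is o-minimal, so the generic o-minimal cell decomposition theorem already produces, for any semilinear $X \subseteq R^m$, a finite disjoint partition of $X$ into o-minimal cells. The only gap between such an o-minimal cell and a semilinear cell is that the defining functions are a priori merely definable and continuous, rather than linear.

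First I would proceed by induction on $m$. The base case $m=0$ is trivial and the case $m=1$ follows because any definable subset of $R$ is, by o-minimality, a finite union of points and open intervals whose endpoints lie in $R \cup \{-\infty,+\infty\}$; these are precisely the semilinear cells of dimension $0$ or $1$. For the inductive step, suppose the statement holds for all semilinear subsets of $R^m$, and let $X \subseteq R^{m+1}$ be semilinear. Applying the o-minimal cell decomposition theorem to $X$ yields a finite disjoint partition of $X$ into o-minimal cells of the form $\Gamma(f)$ or $(f,g)_B$, where each base $B \subseteq R^m$ is itself an o-minimal cell and $f,g$ are $\cR$-definable continuous functions on $B$ (with $g$ possibly $\pm\infty$).

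Next, I would upgrade each such o-minimal cell to a disjoint union of semilinear cells. By Fact~\ref{fact:semilineardefinability} the base $B$ is semilinear, so the inductive hypothesis partitions it into finitely many semilinear cells $B_1,\dots,B_k$. On each $B_j$, the restriction $f\!\upharpoonright\! B_j$ is a definable function into $R$ and hence, by \cite{vandries1998}*{Corollary 1.7.6}, is piecewise linear: there is a further finite partition of $B_j$ into semilinear pieces on each of which $f$ agrees with a linear function. Intersecting this refinement with the analogous one for $g$, and then applying the inductive hypothesis once more to each piece (to re-express it as a finite disjoint union of semilinear cells on which both $f$ and $g$ are linear), we obtain a finite disjoint family of semilinear cells $B'_1,\dots,B'_N$ refining $B$. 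Replacing the original o-minimal cell by the disjoint union of $\Gamma(f\!\upharpoonright\! B'_i)$ or $(f\!\upharpoonright\! B'_i,\, g\!\upharpoonright\! B'_i)_{B'_i}$ yields a partition into semilinear cells.

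The main technical obstacle is the interplay between two kinds of refinement: refining the base into semilinear cells on which the defining functions are simultaneously linear, while keeping the resulting pieces genuine cells in the inductive sense. This is resolved by alternating the inductive hypothesis in dimension $m$ with the piecewise-linearity fact, both of which produce only finitely many pieces, so the process terminates after finitely many steps. Collecting all pieces across all the original o-minimal cells gives the desired finite disjoint decomposition of $X$ into semilinear cells.
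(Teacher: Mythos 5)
Your proposal is correct and follows essentially the same route the paper takes: the paper records this fact without proof, attributing it precisely to the o-minimal cell decomposition theorem combined with piecewise linearity of definable functions in $\cR$ (\cite{vandries1998}*{Corollary 1.7.6}), and your induction on the ambient dimension with base-refinement is just the standard elaboration of that argument. No gaps: refining the base cells so that $f$ and $g$ become linear, and re-applying the inductive hypothesis, preserves disjointness and produces genuine semilinear cells.
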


Note that a semlinear open cell is simply a $(1, \dots, 1)$-cell.
The following is an immediate consequence of the definitions.
\begin{lem} \label{lem:cubeparameterspace}
Suppose $\X = (X_b)_{b\in Y}$ is a semilinear family and $Y$ is a semilinear cell. Then there is a semilinear family $\X' = (X'_{b})_{b\in Y'}$ with $Y'$ a semilinear open cell such that $\X$ and $\X'$ correspond to the same set system. 
\end{lem}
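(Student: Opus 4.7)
The plan is to exhibit a linear bijection between $Y$ and a semilinear open cell, and then reparameterize $\X$ along this bijection. Suppose $Y \subseteq R^n$ is an $(i_1, \dots, i_n)$-cell, and let $J = \{ j \in [n] : i_j = 1 \}$ with $|J| = s$. I claim that the coordinate projection $\pi_J : R^n \to R^s$ onto the coordinates indexed by $J$ restricts to a semilinear bijection from $Y$ onto an $(1,\dots,1)$-cell $Y' \subseteq R^s$, and moreover that the inverse of this bijection is given by a semilinear map $\phi : Y' \to Y$ all of whose ``non-$J$'' coordinate components are linear functions of the $J$-coordinates.

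The proof of this claim is by straightforward induction on $n$ using the inductive definition of an $(i_1, \dots, i_n)$-cell from Section~\ref{sec:semilinearcell}. If $i_n = 1$, then $Y = (f, g)_X$ for some $(i_1, \dots, i_{n-1})$-cell $X$ and $f < g$ in $L_\infty(X)$; by the inductive hypothesis, $X$ is in linear bijection with an open cell $X' \subseteq R^{s-1}$ via a map $\phi_X$, and $Y$ is in linear bijection with the open cell $(f \circ \phi_X, g \circ \phi_X)_{X'} \subseteq R^s$ via the obvious extension. If $i_n = 0$, then $Y = \Gamma(h)$ is the graph of a linear function $h \in L(X)$ for some $(i_1, \dots, i_{n-1})$-cell $X$; here $\pi_J$ simply forgets the last coordinate, which is determined by $h$, and the inductive hypothesis handles $X$.

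Having obtained $Y'$ and $\phi$, I define $\X' = (X'_{b'})_{b' \in Y'}$ by $X'_{b'} := X_{\phi(b')}$. Since the graph of $\phi$ is semilinear (it is cut out by the linear equations recording the ``graph'' coordinates of $Y$) and composition of semilinear relations is semilinear, the family $\X'$ is semilinear. Moreover $\phi$ is a bijection $Y' \to Y$, so $\{ X'_{b'} : b' \in Y' \} = \{ X_b : b \in Y \}$, i.e., $\X$ and $\X'$ induce the same set system. Since $Y'$ is a $(1, \dots, 1)$-cell, it is a semilinear open cell, completing the proof.

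There is no substantial obstacle here; the content is purely a bookkeeping argument unpacking the inductive definition of a semilinear cell and observing that ``$0$-coordinates'' are redundant parameters.
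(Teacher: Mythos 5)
Your proposal is correct and fills in exactly the argument the paper leaves implicit (the paper states the lemma without proof, remarking only that it is ``an immediate consequence of the definitions''). Projecting a semilinear $(i_1,\dots,i_n)$-cell onto its $1$-indexed coordinates is indeed a bijection onto a $(1,\dots,1)$-cell with affine inverse, by the same induction on $n$ that defines cells, and reparameterizing $\X$ along this bijection preserves both semilinearity of the total set $Z$ and the induced set system.

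One small point worth making explicit, which you gesture at but don't fully verify: when $i_n = 1$ and you set $Y' = (f\circ\phi_X,\,g\circ\phi_X)_{X'}$, you need $f\circ\phi_X$ and $g\circ\phi_X$ to lie in $L_\infty(X')$, i.e., to be affine (or $\pm\infty$). This follows because the inductive hypothesis should be stated slightly more strongly than ``$\phi_X$ is semilinear'': each component of $\phi_X$ is an affine function of the $J$-coordinates (your parenthetical claim about the ``non-$J$'' coordinates). Since $f,g$ are affine and $\phi_X$ is componentwise affine, the compositions are affine, so $Y'$ is genuinely a $(1,\dots,1)$-cell and not merely a semilinear open set. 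With that strengthening carried through the induction, the argument is complete.

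Also note the paper's cell definition writes $C(X)$ and $C_\infty(X)$ where it clearly means $L(X)$ and $L_\infty(X)$; your proof correctly reads them as the affine function spaces.
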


\subsection{Dimension} \label{App:A3}

Here, we state some standard properties of dimension. For a set $X \subseteq R^m$, we write $\dim_\cR(X)$ for the dimension of $X$ in $\cR$. When $\cR$ is clear from context, we write $\dim(X)$ instead of $\dim_\cR(X)$. 

\begin{fact}
\label{fact:dimension}
Suppose $X$ and $Y$ are semilinear in $\cR$. Then we have the following:
\begin{enumerate}[label=(\alph*)]
    \item $\dim_\cR (\emptyset) = -\infty $; $\dim_\cR (X) =0$  if and only if $X$ is finite; if $I=(a,b)$ with $a,b \in \R \cup\{-\infty, +\infty\}$ and $a<b$, then $\dim_\cR (I)  =1$ (see~\cite{vandries1998}*{page 5}).
    \item $\dim_\cR(X \cup Y) =\max\{ \dim_\cR (X), \dim_\cR (Y)\}$ (see~\cite{vandries1998}*{Proposition~4.1.3}).
    \item If $\X = (X_b)_{b\in Y}$ is a semilinear family and $X = \{(a,b) :a \in X_b\}$, then, for every $d \leq \dim (X)$, we have that $ Y_d := \{ b\in Y : \dim_\cR(X_b) =d \} $    is semilinear, 
    and 
    $$ \dim_\cR (X) = \max_{d \leq \dim (X)} (d + \dim_\cR (Y_d));       $$
    in particular, if $f: X \to Y$ is a definable bijection, then $\dim_\cR(X) = \dim_\cR (Y)$ and  $\dim_\cR (X \times Y ) = \dim_\cR (X) + \dim_\cR (Y)$ (see~\cite{vandries1998}*{Corollary 4.1.6}).
    \item  If $\cR=\RR$ and $X \subseteq \RR^m$, then $\dim_\RR (X) = m$ if and only if $X$ contains an open subset (see~\cite{vandries1998}*{Section 4.1.1}). 
    \item If $\X = (X_b)_{b\in Y}$ is a definable family in $\cR$, then there is ${N \in \NN^{\geq 1}}$ such that either $|X_b|< N$ or $\dim_\cR(X_b) \geq 1$ (see~\cite{vandries1998}*{Lemma~3.1.7 and Corollary 4.1.6}).
\end{enumerate}
\end{fact}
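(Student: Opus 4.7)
The plan is to reduce all five items to the semilinear cell decomposition (Fact~\ref{fact:semilinearfiniteuntion}) together with the standard characterization that, for a semilinear set $X\subseteq R^m$, the dimension $\dim_\cR(X)$ is the maximum over all cell decompositions $\{C_1,\ldots,C_k\}$ of $X$ of the numbers $i_1+\cdots+i_m$, where $(i_1,\ldots,i_m)$ is the type of a cell $C_j$. Once this characterization is in hand, each item becomes a matter of reading off cell types under basic set-theoretic operations.

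For (a), the three assertions read off directly: $\emptyset$ decomposes as the empty collection (conventionally assigned dimension $-\infty$), $X$ is finite iff every cell in some decomposition is a $(0,\ldots,0)$-cell (a point), and an open interval $(a,b)$ is by definition a $(1)$-cell of dimension $1$. For (b), take cell decompositions of $X$ and of $Y$, refine them into a common decomposition of $X\cup Y$ (intersecting cells and re-decomposing intersections), and note that the maximum cell dimension in the refinement is $\max\{\dim X,\dim Y\}$. For (d), I induct on $m$: a $(1,\ldots,1)$-cell is an open subset of $R^m$ in the Euclidean topology, and any other cell, having some coordinate pinned by a linear equation, is contained in a lower-dimensional flat and hence has empty interior; so $\dim X=m$ iff some cell of a decomposition is $(1,\ldots,1)$, iff $X$ contains a Euclidean-open set.

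For (c), the substantive item, I would cell-decompose $X\subseteq R^{m+n}$ compatibly with the projection $\pi$ onto the last $n$ coordinates, arranging (by further refinement) that $\pi$ maps each cell $C\subseteq X$ onto a single cell $\pi(C)\subseteq Y$ and that the type of $C$ splits as a ``base part'' matching the type of $\pi(C)$ and a ``fiber part'' constant along $\pi(C)$. Reading dimensions off the type then gives $\dim C=\dim\pi(C)+\dim C_b$ for every $b\in\pi(C)$, and summing/taking maxima over cells yields $\dim X=\max_{d}(d+\dim Y_d)$. The set $Y_d$ is semilinear because it is the union of the $\pi(C)$ with fiber dimension exactly $d$, minus the lower-dimensional part of $Y$ where fiber dimension jumps to something larger. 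The ``in particular'' consequences follow at once: the bijection case by applying the formula to the graph of $f$, the product case by applying it to the constant family $(X)_{b\in Y}$. For (e), I would apply cell decomposition to the graph of $\X$ in $R^{m+n}$: each cell whose fibers are $0$-dimensional contributes a uniformly bounded number of points per fiber (the cardinality is controlled by the type of the cell), so summing over the finitely many such cells produces the desired $N$; if instead some cell has positive fiber dimension over $b$, then $\dim X_b\geq 1$ by (c).

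The main technical burden sits in (c): verifying $\dim C=\dim\pi(C)+\dim C_b$ for every cell requires the inductive definition of cells together with the existence of a decomposition \emph{compatible with a coordinate projection}, which is the semilinear analogue of the standard o-minimal cell decomposition theorem. This compatibility is carried out in~\cite{vandries1998}*{Chapter~4}, and all of the remaining items are essentially corollaries of this construction.
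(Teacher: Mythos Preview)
The paper does not prove this statement: it is recorded as a \texttt{fact} with each item accompanied by a citation to van den Dries~\cite{vandries1998}, and no argument is given in the paper itself. Your sketch is correct and is essentially a summary of how the cited reference establishes these properties via cell decomposition, so there is nothing to compare against beyond noting that your outline matches the standard proofs the paper is deferring to.
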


\begin{fact} \label{fact:definabilitydimension}
Suppose $\phi(x, y)$ is a semilinear formula in $\cR$, and $\cR'=(R', \ldots)$ is an elementary extension of $\cR$. Then there is a semilinear formula $\theta$ in $\cR$ independent of the choice of $\cR'$ such that, for every $b\in (R')^{|y|}$, we have 
$$ \dim_{\cR'} (\phi( \cR', b)) =d \text{ if and only if } \cR' \models \theta(b). $$
In particular, for every $b\in R^{|y|}$, we have $\dim_{\cR'} (\phi( \cR', b)) = \dim_{\cR} (\phi( \cR, b)) $.
\end{fact}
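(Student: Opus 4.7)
The plan is to deduce this from a parameter-uniform version of the semilinear cell decomposition (Fact~\ref{fact:semilinearfiniteuntion}). Given the semilinear formula $\phi(x, y)$ with $|x| = m$, I will construct a finite partition $Y_1, \dots, Y_N$ of $R^{|y|}$ into semilinear sets, defined by semilinear formulas in $\cR$, together with, for each $i$, semilinear functions $f_{i, 1}(y), \dots, f_{i, k_i}(y)$ and fixed cell types $\tau_{i, 1}, \dots, \tau_{i, k_i} \in \{0,1\}^m$ such that for every $b \in Y_i$, the fibre $\phi(\cR, b)$ is the disjoint union of semilinear cells of types $\tau_{i, j}$ whose defining data comes from evaluating the $f_{i, j}$ at $b$. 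This is the piecewise-linear analogue of the uniform cell decomposition for o-minimal structures; it is obtained by the same induction on $m$ as in the classical argument (see \cite{vandries1998}*{Chapter~3}), using that definable functions in $\cR$ are piecewise linear so that all auxiliary functions that appear stay semilinear.

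From this uniform decomposition, by Fact~\ref{fact:dimension}(a),(b), for any $b \in Y_i$ the dimension of $\phi(\cR, b)$ equals the purely syntactic quantity $d_i := \max_{j \in [k_i]} |\{\ell : (\tau_{i,j})_\ell = 1\}|$, independent of $b$. Hence the set $\{ b \in R^{|y|} : \dim_\cR(\phi(\cR, b)) = d\} = \bigcup_{i : d_i = d} Y_i$ is semilinear; let $\theta(y)$ be a semilinear formula defining it.

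For the part about elementary extensions, observe that the existence of the above uniform decomposition is itself a first-order statement in $\cR$: the defining semilinear formulas of the pieces $Y_i$ and of the functions $f_{i, j}$, together with the sentences asserting that the $Y_i$ partition $R^{|y|}$ and that for each $b \in Y_i$ the fibre $\phi(\cR, b)$ equals the indicated disjoint union of cells, are all first-order in the language $L$ and hold in $\cR$. By elementary equivalence they hold verbatim in $\cR'$. Since the types $\tau_{i, j}$ and hence the numbers $d_i$ are syntactic, the same formula $\theta$ defines the set of $b \in (R')^{|y|}$ with $\dim_{\cR'}(\phi(\cR', b)) = d$. Specialising to $b \in R^{|y|}$ and using $\cR \preceq \cR'$ yields the last sentence.

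The main obstacle lies entirely in the first paragraph: producing a uniform semilinear cell decomposition in which the partition, the auxiliary functions, and (crucially) the cell types are all constant on each parameter piece, while remaining within the semilinear category. The inductive construction of the cell decomposition in the o-minimal setting is standard; the only thing to verify is that piecewise linearity of definable functions in $\cR$ ensures that every function appearing in the recursion is semilinear, which is immediate from \cite{vandries1998}*{Corollary~1.7.6}.
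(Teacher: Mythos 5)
The paper states Fact~\ref{fact:definabilitydimension} without proof, treating definability of dimension and its invariance under elementary extension as standard consequences of cell decomposition and quantifier elimination in the o-minimal/semilinear setting; your argument via a uniform semilinear cell decomposition compatible with the parameter projection is the standard proof and is correct. One small citation imprecision: computing the dimension of a cell of fixed type in $R^m$ (before taking the maximum over cells) requires Fact~\ref{fact:dimension}(c), via the inductive graph/band cases, in addition to (a) and (b).
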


Let $X$ be a semilinear subset of $\R^m$ and $a\in \R^m$. We set
$$ \dim_a(X) = \inf \{ \dim_\cR( U \cap X) : U \subseteq R^m \text{ is semilinear and } a\in U\}. $$
As the topology on $R^m$ is the Euclidean topology, in the above definition, we can restrict our attention to open cubes centered at $a$.

The following is a standard consequence of the fact that $(\R; +, <)$ is o-minimal.
\begin{fact} \label{fact:dimandlocaldim}
Suppose $X\subseteq \R^m$ is semilinear. Then $\dim_\cR(X) = \sup_{a \in \R^m} \dim_a(X)$. Moreover, if $Z$ is a flat, then $\dim_a(Z)=\dim_\cR(Z)$ for all $a \in Z$. 
\end{fact}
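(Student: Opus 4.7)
The plan is to reduce both assertions to the local structure of semilinear cells together with the behavior of dimension under definable bijections.

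For the identity $\dim_\cR(X) = \sup_{a \in R^m} \dim_a(X)$, the inequality ``$\geq$'' is immediate by taking $U = R^m$ in the infimum defining $\dim_a(X)$, which gives $\dim_a(X) \leq \dim_\cR(X)$ for every $a$. For the reverse inequality, I would apply semilinear cell decomposition (Fact~\ref{fact:semilinearfiniteuntion}) to write $X$ as a finite disjoint union of semilinear cells $C_1,\dots,C_k$. By Fact~\ref{fact:dimension}(b), $\dim_\cR(X) = \max_j \dim_\cR(C_j) =: d$. Pick a cell $C$ with $\dim_\cR(C) = d$ and pick any $a \in C$. It then suffices to show $\dim_a(C) = d$, since $\dim_a(X) \geq \dim_a(C)$.

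To establish $\dim_a(C) = d$, I would exploit the inductive structure of an $(i_1,\dots,i_m)$-cell with exactly $d$ ones: letting $\pi$ be the projection onto the coordinates indexed by the $1$'s, the restriction $\pi|_C$ is a homeomorphism onto an open $(1,\dots,1)$-cell $C' \subseteq R^d$ (its inverse is an affine map built from the linear functions defining the graph-structure of $C$). For any open cube $U \ni a$, the set $U \cap C$ is open in $C$, so $\pi(U \cap C)$ is open in $C'$, hence a $d$-dimensional open subset of $R^d$. Preservation of dimension under definable bijection (Fact~\ref{fact:dimension}(c)) then gives $\dim_\cR(U \cap C) = d$. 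Since the infimum in the definition of $\dim_a$ may be taken over open cubes centered at $a$, we conclude $\dim_a(C) \geq d$; the reverse inequality follows from $\dim_\cR(U \cap C) \leq \dim_\cR(C) = d$.

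For the ``moreover'' clause about flats, I would argue directly: if $Z$ is a flat with $\dim_\cR(Z) = d$ and $a \in Z$, then $Z$ is itself a semilinear cell of dimension $d$, and any open cube $U$ centered at $a$ meets $Z$ in a relatively open neighborhood of $a$ in $Z$. Translating by $-a$ and choosing a $D$-linear basis of $Z - a$ identifies this neighborhood with a genuine open box in $R^d$, so $\dim_\cR(U \cap Z) = d$ by Fact~\ref{fact:dimension}(c,d). Taking the infimum over $U$ yields $\dim_a(Z) = d$. The only real content is the ``locally of constant dimension'' property of cells, which is precisely what the graph-over-open-cell description supplies; this is the step where I would be most careful.
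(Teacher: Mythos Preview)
The paper does not supply a proof of this Fact; it is stated as ``a standard consequence of the fact that $(\R; +, <)$ is o-minimal.'' Your argument is a correct and standard way to unpack that sentence: cell decomposition plus the projection homeomorphism of a $d$-dimensional cell onto an open subset of $R^d$, together with invariance of dimension under definable bijections. Two cosmetic points worth tightening: first, Fact~\ref{fact:dimension}(d) as recorded in the paper is only for $\cR=\RR$, but the statement you actually use (a nonempty definable open subset of $R^d$ has dimension $d$) holds in any o-minimal structure and is in the same van~den~Dries reference; second, a flat $Z$ is a single semilinear cell only after a suitable permutation of coordinates (so that the free variables come first), which is harmless since permutations are definable bijections. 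Neither point is a real gap.
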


\subsection{Stability-theoretic results}

Let $\cR =(R; +,<, \ldots)$ be a saturated o-minimal structure expanding a divisible ordered abelian group. For $A \subseteq R$ and $a \in R^m$, we set 
$\dim(a/A) = \min\{ X \subseteq R^m : a \in X, X \text{ is definable over } A\}$. Given a finite tuple $a$ and sets $C,B \subseteq R$, we write $a \ind_C B$ to denote that $\dim \left(a/BC \right) = \dim \left(a / C \right)$.

We say that $\cR$ is \emph{weakly locally modular} if for all  small subsets $A,B$ of $R$, there exists some small set $C \ind_{\emptyset} AB$ such that $A \ind_{\acl(AC) \cap \acl(BC)} B$.

A family of plane curves in $\cR$ is a definable family $(X_b)_{b \in Y}$ of subsets of $R^2$ such that $\dim(X_b)=1$ for all $b \in Y$. Such a family is normal if $\dim(X_b\cap X_{b'})=0$ for all distinct $b, b'\in Y$. An $o$-minimal structure  $\cR$ is \emph{linear} (i.e.~any
normal interpretable family of plane curves in $T$ has dimension $\leq 1$). 

It is known that, for o-minimal structures, these two notion are equivalent.
\begin{fact}[\cite{berenstein2012weakly}*{Section 3.2}]\label{fac: o-min trich}
Let $\mathcal{M}$ be an $o$-minimal ($\aleph_1$-)saturated structure.	The following are equivalent:
\begin{itemize}
	\item $\mathcal{M}$ is not linear;
	\item $\mathcal{M}$ is not weakly locally modular;
	\item there exists a real closed field definable in $\mathcal{M}$.
\end{itemize}
\end{fact}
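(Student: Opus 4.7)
The plan is to prove the three-way equivalence by establishing the cyclic chain of implications: not linear $\Rightarrow$ real closed field definable $\Rightarrow$ not weakly locally modular $\Rightarrow$ not linear. The direction ``definable real closed field $\Rightarrow$ not linear'' is the most elementary: if $F$ is a real closed field definable in $\mathcal{M}$, then inside $F$ one exhibits an explicit high-dimensional normal family of plane curves, for example the family of circles $\{(x,y) \in F^2 : (x-a)^2 + (y-b)^2 = c^2\}$ parametrized by $(a,b,c) \in F^3$, whose parameter space has dimension $3$. Since distinct circles meet in at most finitely many points, this normal family has dimension $\geq 2$, contradicting linearity.

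For the implication ``linear $\Rightarrow$ weakly locally modular'' (used in the contrapositive), one invokes the structure theorem for linear o-minimal structures due to Loveys and Peterzil: after possibly naming parameters, every such $\mathcal{M}$ is a reduct of an ordered vector space over an ordered division ring, possibly enriched by unary functions with bounded domain that do not affect the geometric content. Ordered vector spaces are one-based, and one-basedness (together with the algebraic closure operator coming from $\dim$) implies weak local modularity by the standard argument taking $C$ to absorb the relevant bounded parts. The local nature of the modifications preserves weak local modularity, giving the desired conclusion.

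The deepest implication is ``not linear $\Rightarrow$ definable real closed field'', which is the Peterzil--Starchenko o-minimal trichotomy. The strategy is to extract an algebraic field structure from a sufficiently rich family of curves. Starting from a normal family $(X_b)_{b \in Y}$ with $\dim Y \geq 2$, one first localizes to a generic point $p$ lying on many curves and works with the germs of the $X_b$ at $p$. Using o-minimal cell decomposition and definable choice, the composition of such germs can be shown to yield a definable group chunk in some Euclidean neighborhood. An o-minimal adaptation of Hrushovski's group configuration theorem then produces a definable local group $G$ of dimension $\geq 1$, and analyzing the faithful action of $G$ on the original curve family (which is itself parametrized by a set of dimension $\geq 2$) produces a second commuting operation; the interaction of these operations forces a definable field structure, which by o-minimality must be a real closed field.

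The main obstacle is this final step. Converting geometric non-linearity into an actual algebraic field requires substantial machinery -- the o-minimal group configuration theorem, analysis of infinitesimal neighborhoods, and the Peterzil--Starchenko trichotomy proper. Since a complete treatment would take us far from the topic of shatter functions, and since the result is due to \cite{berenstein2012weakly}*{Section~3.2} building on Peterzil--Starchenko's original work, we simply take Fact~\ref{fac: o-min trich} as a black box.
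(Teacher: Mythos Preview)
The paper does not prove this statement at all: it is recorded as a \emph{Fact} with a citation to \cite{berenstein2012weakly}*{Section~3.2} and no argument is given. Your proposal ultimately reaches the same conclusion---treating the result as a black box---so in that sense it matches the paper's approach. The additional proof sketch you supply (the cycle of implications via Peterzil--Starchenko trichotomy and the Loveys--Peterzil structure theorem) is reasonable background but goes beyond what the paper does; the paper simply invokes the result.
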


The following is essentially a consequence of quantifier elimination.
\begin{fact} \label{fac: vectorislinear}
      Every theory of an ordered vector space over an ordered division ring is weakly locally modular.
\end{fact}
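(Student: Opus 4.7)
The plan is to establish the stronger property of \emph{local modularity} of $\cR$ (i.e.~that the choice $C = \emptyset$ already works in the definition of weak local modularity); the desired conclusion is then immediate. Let $\cR = (R; <, 0, 1, +, (\lambda \cdot)_{\lambda \in D})$ be an ordered vector space over an ordered division ring $D$.

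First, I would verify the identification $\acl(A) = \operatorname{dcl}(A) = D_A$, where
\[ D_A := \left\{\lambda_0 + \sum_{i=1}^{k} \lambda_i a_i : k \in \NN,\; \lambda_i \in D,\; a_i \in A\right\} \]
is the $D$-affine span of $A$ inside $R$. The inclusion $\operatorname{dcl}(A) \subseteq D_A$ follows from quantifier elimination for $\cR$ (Fact~\ref{fact:semilineardefinability}): a definable singleton over $A$ is cut out by a conjunction of linear equations over $A$, so its unique element has the displayed form. The reverse inclusion is clear. Finally, $\acl(A) = \operatorname{dcl}(A)$ because o-minimality implies any definable finite set is a finite disjoint union of $\operatorname{dcl}$-points (each isolated by an interval).

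Next, with $\acl(A) = D_A$, the dimension function $\dim$ (Appendix~\ref{App:A3}) coincides with the $D$-affine rank modulo parameters. Consequently, $a \ind_E B$ holds precisely when no entry of $a$ is a $D$-linear combination of elements of $B$ modulo $D_E$; equivalently, $D_{aE} \cap D_{BE} = D_E$ (working up to affine translation). Local modularity then reduces to the modular identity for $D$-subspaces,
\[ \dim_D(U + V) + \dim_D(U \cap V) = \dim_D(U) + \dim_D(V), \]
applied to $U = D_A$ and $V = D_B$. Unwinding the definitions, this identity delivers exactly $A \ind_{D_A \cap D_B} B$, which is $A \ind_{\acl(A) \cap \acl(B)} B$, i.e.~weak local modularity with the witness $C = \emptyset$.

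The only substantial obstacle is the clean identification $\acl(A) = D_A$, which relies on quantifier elimination combined with o-minimality to rule out spurious algebraic elements. Everything that follows is standard linear algebra via the modular law, which is the characteristic feature of the pregeometry attached to a $D$-vector space.
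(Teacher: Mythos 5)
Your argument is correct, and it supplies exactly the details the paper elides when it remarks that this fact is ``essentially a consequence of quantifier elimination'': quantifier elimination identifies $\acl$ with the $D$-linear span of $A\cup\{1\}$, and the modular law for the lattice of $D$-subspaces then gives $A \ind_{\acl(A)\cap\acl(B)} B$, i.e.\ weak local modularity with the trivial witness $C=\emptyset$. Two small points of care: $\lambda_0$ in your definition of $D_A$ should be read as $\lambda_0\cdot 1$, and what you actually establish is outright modularity of the pregeometry (not merely local modularity), since the named constants $0,1 \in \acl(\emptyset)$ collapse the usual affine/linear distinction.
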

A partial converse is given by the following result.
\begin{fact}[\cite{LoPe93}, Theorem~6.1]  \label{fact: Onebasedorderstructure}
Suppose $\mathfrak{R} = (R; +, <, \ldots)$ is an o-minimal structure expanding an ordered abelian group, and $ \mathfrak{R}$ is linear. Then $ \mathfrak{R}$ has an elementary extension which is a reduct of an ordered vector space of an ordered division ring.
\end{fact}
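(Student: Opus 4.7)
This is Loveys--Peterzil's structure theorem for linear o-minimal expansions of an ordered abelian group. The plan is to pass to a sufficiently saturated extension $\mathfrak{R}^* = (R^*; +, <, \ldots) \succeq \mathfrak{R}$, extract from the definable unary functions a canonical ordered division ring $D$ of ``slopes'', and then show that $\mathfrak{R}^*$ is a reduct of the ordered $D$-vector space $(R^*; +, <, 0, 1, (\lambda\cdot)_{\lambda \in D})$. Note that since ``being a reduct of an ordered vector space over some ordered division ring'' passes up through elementary equivalence only in a weak sense, working in a saturated extension is essential: it guarantees that the ring $D$ of slopes is rich enough to witness all the local linear behaviour that occurs in $\mathfrak{R}^*$.

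The first step, and the main obstacle, is to show that every definable unary function $f \colon I \to R^*$ in $\mathfrak{R}^*$ is piecewise affine. By o-minimal monotonicity and $C^1$-cell decomposition, on a cofinite open subset of its domain $f$ is $C^1$, so it suffices to show that the derivative $f'$ is locally constant. Suppose it is not: then on some subinterval $J \subseteq I$, $f'$ is strictly monotone. The plan is to use the family of translated graphs $C_{a,b} = \{(x, f(x - a) + b) : x \in a + J\}$, which is a definable family of plane curves in $\mathfrak{R}^*$ parameterised by $(a, b) \in R^* \times R^*$. Strict monotonicity of $f'$ means that two distinct translates $C_{a,b}$ and $C_{a', b'}$ can agree on only a zero-dimensional set (otherwise one would recover $f'$ on a positive-dimensional set as a constant, contradicting strict monotonicity), so the family is normal in the Peterzil--Starchenko sense. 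But it is honestly two-dimensional, contradicting linearity of $\mathfrak{R}$ (which transfers to $\mathfrak{R}^*$). Hence $f'$ is locally constant, and $f$ is piecewise affine.

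Next, define $D \subseteq R^*$ to be the set of slopes realised by $\emptyset$-definable unary functions $f$ that are affine on a neighbourhood of $0$ with $f(0) = 0$; equivalently, $\lambda \in D$ iff there is such an $f$ with $f(x) = \lambda x$ on some neighbourhood of $0$. I would verify that $D$ is an ordered division ring: addition comes from $(f + g)(x) = f(x) + g(x)$, multiplication from composition $f \circ g$, the additive inverse from $-f$, the multiplicative inverse from the local compositional inverse $f^{-1}$ (which exists as an affine function when $f$ has nonzero slope), and the order from the sign of $f(\epsilon)$ for small $\epsilon > 0$. Saturation of $\mathfrak{R}^*$ is used to ensure that every locally affine behaviour that should produce an element of $D$ actually does. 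This gives a scalar action $\lambda \cdot r := f_\lambda(r)$ on $R^*$ where $f_\lambda$ is any linear function with slope $\lambda$; the vector-space axioms follow from the ring operations on $D$. Let $V = (R^*; +, <, 0, 1, (\lambda \cdot)_{\lambda \in D})$; by construction $V$ is an ordered $D$-vector space.

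Finally, I would show that $\mathfrak{R}^*$ is a reduct of $V$, i.e.\ every $\emptyset$-definable set of $\mathfrak{R}^*$ is definable in $V$ (over some parameters). Using o-minimal cell decomposition together with induction on dimension, any $\mathfrak{R}^*$-definable set is a finite union of cells built from intervals and graphs of $\mathfrak{R}^*$-definable continuous functions of fewer variables; so it suffices to handle definable functions. A definable function of several variables in an o-minimal structure expanding an ordered group is, by cell decomposition, built from compositions of definable unary functions along coordinate slices, and by the first step each such unary piece is piecewise affine with slope in $D$ and hence $V$-definable. Piecing these together yields a $V$-definable description of the original set. The main technical difficulty beyond the piecewise-affinity step is ensuring that the slopes arising in multi-variable pieces are uniformly in $D$, which again uses saturation: the set of potential slopes is small, and saturation allows us to realise all of them simultaneously as honest elements of $R^*$.
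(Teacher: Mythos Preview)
The paper does not prove this statement at all: it is stated as a \emph{Fact} and attributed to Loveys--Peterzil \cite{LoPe93}*{Theorem~6.1}, with no argument given. So there is no ``paper's own proof'' to compare your proposal to; the authors simply import the result.

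As a sketch of the Loveys--Peterzil argument your outline is in the right spirit (extract a division ring of slopes from local linear behaviour and show every definable set is semilinear over it), but one step is genuinely problematic as written. You invoke ``$C^1$-cell decomposition'' and argue with the derivative $f'$, but $\mathfrak{R}$ is only assumed to expand an ordered abelian \emph{group}, not an ordered field, so there is no ambient multiplication with which to form difference quotients, and the standard $C^1$ cell decomposition theorem is not available. The correct replacement is to work directly with the failure of local affineness: if $f$ is not affine on any subinterval, one produces a normal $2$-dimensional family of plane curves from translates of its graph without ever mentioning $f'$. A second, smaller gap is in the globalisation of the scalar action: your $\lambda\cdot r$ is defined via a germ of $f_\lambda$ near $0$, and some care (again using saturation, or the group structure to translate) is needed to extend this to all of $R^*$ and to check it is well-defined independently of the choice of $f_\lambda$.
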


\end{document}